\def\R{\mathbb R}
\def\N{\mathbb N}
\numberwithin{equation}{section}
\newtheorem{theorem}{Theorem}
\newtheorem{lemma}[theorem]{Lemma}
\newtheorem{proposition}[theorem]{Proposition}
\newtheorem{corollary}[theorem]{Corollary}
\newtheorem{definition}[theorem]{Definition\rm}
\newtheorem{remark}[theorem]{Remark}
\newtheorem{claim}[theorem]{Claim}
\numberwithin{theorem}{section}
\title{The Sard conjecture on Martinet surfaces}
\author{A.~Belotto\thanks{University of Toronto, Department of Mathematics, 40 St. George Street,
Toronto, ON, Canada M5S 2E4 (andrebelotto@gmail.com)}
\and
L.~Rifford\thanks{Universit\'e Nice Sophia
    Antipolis, Labo.\ J.-A.\ Dieudonn\'e, UMR CNRS 6621, Parc
    Valrose, 06108 Nice Cedex 02, France \& Institut Universitaire de France ({\tt
      ludovic.rifford@math.cnrs.fr})}}
\date{}
\begin{document}

\maketitle

\begin{abstract}
Given a totally nonholonomic distribution of rank two on a three-dimensional manifold we investigate the size of the set of points that can be reached by singular horizontal paths starting from a same point. In this setting, the Sard conjecture states that that set should be a subset of the so-called Martinet surface of 2-dimensional Hausdorff measure zero. We prove that the conjecture holds in the case where  the Martinet surface is smooth. Moreover, we address the case of singular real-analytic Martinet surfaces and show that the result holds true under an assumption of non-transversality of the distribution   on the singular set of the Martinet surface. Our methods rely on the control of the divergence of vector fields generating the trace of the distribution on the Martinet surface  and some techniques of resolution of singularities.

\end{abstract}


\section{Introduction}\label{Intro}

Let $M$ be a smooth connected manifold of dimension $n\geq 3$ and let $\Delta$ be a totally nonholonomic distribution of rank $m <n$ on $M$, that is a smooth subbundle of $TM$ of dimension $m$ such that for every $x\in M$ there is an open neighborhood $\mathcal{V}$ of $x$ where $\Delta$ is locally parametrized by $m$ linearly independent smooth vector fields $X_x^1, \ldots, X_x^m$ satisfying the so-called H\"ormander or bracket generating condition
\begin{eqnarray*}
\mbox{Lie} \Bigl\{ X_x^1, \cdots ,X_x^m \Bigr\} (y) = T_yM \qquad \forall y\in \mathcal{V}.
\end{eqnarray*}
An absolutely continuous curve $\gamma : [0,1] \rightarrow M$ is called horizontal with respect to $\Delta$  if it satisfies
\begin{eqnarray*}
\dot{\gamma}(t) \in \Delta (\gamma(t)) \qquad  \mbox{for a.e. } t \in [0,1].
\end{eqnarray*}
By Chow-Rashevsky's Theorem, total nonholonomicity plus connectedness implies horizontal path-connectedness. In other words,  for any two points $x,y \in M$ there is an horizontal path $\gamma : [0,1] \rightarrow M$ such that $\gamma(0)=x$ and $\gamma(1)=y$. Given $x\in M$, the set $\Omega_{\Delta}^{x}$ of horizontal paths starting from $x$ whose derivative is square integrable (with respect to a given metric on $M$) can be shown to enjoy the structure of a Hilbert manifold. However, in general given $x, y \in M$ the set $\Omega_{\Delta}^{x,y}$ of paths in  $\Omega_{\Delta}^{x}$ which join $x$ to $y$ fails to be a submanifold of  $\Omega_{\Delta}^{x}$ globally, it may have singularities. It happens to be a submanifold only in neighborhoods of horizontal paths which are not singular. The Sard conjecture for totally nonholonomic distributions is concerned with the size of the set of points that can be reached by those singular paths in $\Omega_{\Delta}^{x}$. It is related to some of the major open problems in sub-Riemannian geometry, see \cite{agrachev14,montgomery02,riffordbook,riffordbourbaki,rt05}.\\

The aim of the present paper is to solve partially this conjecture in the case of rank-two distributions in dimension three. Before stating precisely our result, we wish to define rigorously the notion of singular horizontal path.  For further details on  the material presented here, we refer the reader to Bella\"iche's monograph \cite{bellaiche96}, or to the books by  Montgomery \cite{montgomery02}, by Agrachev, Baralilari and Boscain \cite{abb}, or by the second author \cite{riffordbook}. For specific discussions about the Sard conjecture and  sub-Riemannian geometry, we suggest \cite{agrachev14}, \cite[Chapter 10]{montgomery02} and \cite{riffordbook,riffordbourbaki}.\\

To introduce the notion of singular horizontal path, it is convenient to identify the horizontal paths with the trajectories of a control system. 
It can be shown that there is a finite family $\mathcal{F} = \{X^1, \ldots, X^k\}$ (with $m\leq k \leq m(n+1)$) of smooth vector fields on $M$ such that 
\begin{eqnarray*}
\Delta (x) = \mbox{Span} \Bigl\{ X^1(x), \ldots, X^k(x) \Bigr\}  \qquad \forall x \in M. 
\end{eqnarray*}
For every $x\in M$, there is a non-empty maximal open set $\mathcal{U}^x \subset L^2([0,1], \R^k)$ such that for every control $u=(u_1, \cdots, u_k) \in \mathcal{U}^x$, the solution $x(\cdot; x,u) : [0,1] \rightarrow M$ 
to the Cauchy problem
\begin{eqnarray}\label{Cauchy}
\label{system}
\dot{x}(t) = \sum_{i=1}^k u_i(t) X^i(x(t)) \quad \mbox{for a.e. } t \in [0,1] \quad \mbox{and} \quad  x(0)=x
\end{eqnarray}
is well-defined. By construction,  for every $x\in M$ and every control $u\in  \mathcal{U}^x$ the trajectory $x(\cdot; x,u) $ is an horizontal path in  $\Omega_{\Delta}^{x}$. Moreover the converse is true, any $\gamma \in \Omega_{\Delta}^{x}$ can be written as the solution of (\ref{Cauchy}) for some $u\in \mathcal{U}^x$. Of course, since in general the vector fields $X^1, \ldots, X^k$ are not linearly independent globally on $M$, the control $u$ such that $\gamma=x (\cdot; x,u)$ is not necessarily unique. For every point $x\in M$, the End-Point Mapping from $x$ (associated with $\mathcal{F}$ in time $1$) is defined as 
$$ 
 \begin{array}{rcl}
\mbox{E}^{x} :   \mathcal{U}^{x} & \longrightarrow & M \\
u  & \longmapsto &  x(1;x,u).
\end{array}
$$
It shares the same regularity as the vector fields $X^1, \ldots, X^k$, it is of class $C^{\infty}$.   Given $x\in M$, a control $u \in \mathcal{U}^{x}\subset  L^2([0,1], \R^k)$ is said to be singular (with respect to $x$)  if the linear mapping 
$$
D_uE^{x} \, : \, L^2 \left([0,1],\R^k\right) \, \longrightarrow T_{E^{x}(u)} M
$$
 is not onto, that is if $E^{x}$ is not a submersion at $u$. Then, we call an horizontal path $\gamma \in \Omega_{\Delta}^{x}$ singular if and only if $\gamma = x (\cdot; x,u)$  for some $u \in \mathcal{U}^x$ which is singular (with respect to $x$).  It is worth noting that actually the property of singularity of an horizontal path does depend only on $\Delta$, it is independent of the choice of $X^1, \ldots, X^k$ and of the control $u$ which is chosen to parametrize the path. For every $x\in M$, we denote by $\mathcal{S}^{x}$ the set of controls $u \in \mathcal{U}^{x} \subset  L^2([0,1], \R^k)$ which are singular with respect to $x$. As we said, the Sard conjecture is concerned with the size of the set  of end-points of singular horizontal paths in $ \Omega_{\Delta}^{x}$ given by
$$
\mathcal{X}^x := E^{x} \left(  \mathcal{S}^{x}\right) \subset M.
$$
The set $\mathcal{X}^x$ is defined as the set of critical values of the smooth mapping $E^{x}$, so according to Sard's theorem we may expect it to have Lebesgue measure zero in $M$ which is exactly the statement of the Sard conjecture. Unfortunately, Sard's theorem is known to fail in infinite dimension (see \cite{bm01}), so we cannot prove by "abstract nonsense" that $\mathcal{X}^x$ has measure zero. In fact, singular horizontal paths can be characterized as the projections of the so-called abnormal extremals, which allows, in some cases, to describe the set of singular horizontal paths as the set of orbits of some vector field in $M$. The first case of interest is the case of rank two totally nonholonomic distributions in dimension three whose study is the purpose of the present paper.\\

If $M$ has dimension three and $\Delta$ rank two, it can be shown that the singular horizontal paths are those horizontal paths which are contained in the so-called Martinet surface (see Proposition \ref{PROPsingmartinet})
$$
\Sigma := \Bigl\{ x\in M \, \vert \,  \Delta(x) + [\Delta,\Delta](x) \neq T_xM \Bigr\},
$$
where $[\Delta,\Delta]$ is the (possibly singular) distribution defined by
$$
[\Delta, \Delta] (x) := \Bigl\{ [X,Y](x) \, \vert \, X,Y \mbox{ smooth sections of } \Delta \Bigr\} \qquad \forall x \in M.
$$
Moreover, by total nonholonomicity of the distribution, the set $\Sigma$ can be covered by a countable union of smooth submanifolds of codimension at least one. Consequently for every $x\in M$, the set $\mathcal{X}^x$ is always contained in $\Sigma$ which has zero Lebesgue measure zero, so that the Sard conjecture as stated above holds true for rank two (totally nonholonomic) distributions in dimension three. In fact, since for this specific case singular horizontal paths are valued in a two-dimensional subset of $M$ the Sard conjecture for rank-two distributions in dimension three is stronger and asserts that all the sets $\mathcal{X}^x$ have vanishing $2$-dimensional Hausdorff measure. The validity of this conjecture is supported by a major contribution in the nineties made by Zelenko and Zhitomirskii who proved that for generic rank-two distributions (with respect to the Whitney topology) all the sets $\mathcal{X}^x$ have indeed Hausdorff dimension at most one, see \cite{zz95}. Since then, no notable progress has been made. The purpose of the present paper is to attack the non-generic case. Our first result is concerned with distributions for which the Martinet surface is smooth. 

\begin{theorem}\label{THMdim3}
Let $M$ be a smooth manifold of dimension $3$ and $\Delta$ a rank-two totally nonholonomic distribution on $M$ whose Martinet surface $\Sigma$ is smooth. Then for every $x\in M$ the set $\mathcal{X}^x$ has $2$-dimensional Hausdorff measure zero.
\end{theorem}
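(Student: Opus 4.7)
The strategy is to reduce the analysis of $\mathcal{X}^x$ to the study of orbits of a single vector field on the smooth Martinet surface $\Sigma$, and to use a divergence identity to conclude that the reachable set has $2$-Hausdorff measure zero.

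By Proposition~\ref{PROPsingmartinet}, every singular horizontal path from $x$ lies in $\Sigma$ and, at every non-tangency point $p \in \Sigma$ (where $\Delta(p) \not\subset T_p\Sigma$), has tangent vector in the characteristic line $L(p) := \Delta(p) \cap T_p\Sigma$. I would first choose, in each local chart, a smooth vector field $Z$ on $\Sigma$ generating $L$ outside the tangency set $T := \{p \in \Sigma \,:\, \Delta(p) \subset T_p\Sigma\}$, so that $\mathcal{X}^x$ is contained in the union of orbits of $Z$ emanating from $x$, plus any further orbits reachable by crossing $T$.

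The central step is the divergence calculation. Let $\omega$ be a local defining $1$-form for $\Delta$, and $\Omega$ a local area form on $\Sigma$. Defining $Z$ by $\iota_Z \Omega = \omega|_\Sigma$ yields $\omega(Z) = \Omega(Z,Z) = 0$, so $Z$ generates $L$ and vanishes precisely on $T$. Since $\Omega$ is top-dimensional on the $2$-manifold $\Sigma$, Cartan's formula reduces to $\mathcal{L}_Z \Omega = d\iota_Z \Omega = d(\omega|_\Sigma) = d\omega|_\Sigma$, and hence $\mathrm{div}_\Omega(Z)\cdot \Omega = d\omega|_\Sigma$. With the natural choice $\Omega = d\omega|_\Sigma$ (wherever this $2$-form is nondegenerate), $Z$ has divergence identically $1$; its flow satisfies $\phi_t^*\Omega = e^t\Omega$, and so has bounded Jacobian on bounded time intervals. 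Combined with Gronwall estimates near the zeros, this forces every orbit of $Z$ through a point of $\Sigma \setminus T$ to be an injectively parametrized $C^\infty$ curve of locally finite length, hence $1$-rectifiable and of $2$-Hausdorff measure zero.

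\textbf{The main obstacle} is the behavior of the line field $L$ on the tangency set $T$, where $Z$ vanishes and a singular horizontal path can in principle cross into a different orbit. One must show that only a $2$-measure-zero family of such orbits is accessible from $x$, which should follow from the divergence-$1$ normalization together with a local normal-form analysis of the zeros of $Z$ (classifying them as saddle-, node-, or focus-type). A secondary subtlety is that the area form $\Omega = d\omega|_\Sigma$ degenerates wherever $d\omega$ vanishes on $\Sigma$, and, as the pure Martinet model $\omega = dz - (y^2/2)\,dx$ shows, this can happen on all of $\Sigma$. In such degenerate loci $\omega|_\Sigma$ is closed and the characteristic foliation is locally trivial (the leaves being level sets of a primitive of $\omega|_\Sigma$), so orbits are especially tame; one simply chooses an alternative local area form and patches the estimates together to obtain the global conclusion.
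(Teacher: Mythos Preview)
Your setup is right and the divergence identity $\mathcal L_Z\Omega=\Omega$ (hence $\mathrm{div}_\Omega Z\equiv 1$) is a correct and pretty observation, but it does not do the work you need, and the gap is exactly at the step you flag as ``the main obstacle.''

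The paper's argument is a contradiction via a \emph{two-sided} volume bound. Assuming $\mathcal H^2(\mathcal X^x)>0$, one localizes (Lemma~\ref{LEMxbar1}) to obtain a set $S_0\subset\Sigma_{tr}$ of positive area whose half-orbits have length $\le 1$ and land in $S_\infty\subset\Sigma_{tan}$; one also needs that $\Sigma_{tan}$ is $1$-rectifiable (Lemma~\ref{LEM1rect}), hence $\mathcal H^2$-null. The decisive estimate is Lemma~\ref{LEMlocal1}: for the specific generator $\mathcal Z=(X^1\!\cdot h)X^2-(X^2\!\cdot h)X^1$ one has $|\mathrm{div}^\Sigma\mathcal Z|\le K|\mathcal Z|$ on $\Sigma$. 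Because the half-orbits have bounded \emph{length}, the time-integral of the divergence along each orbit is bounded in absolute value, and the volume of $S_t=\varphi_{\epsilon t}(S_0)$ is bounded below by $e^{-KC}\,\mathrm{vol}(S_0)>0$ \emph{regardless of the sign $\epsilon$}. This contradicts $\mathrm{vol}(S_t)\to 0$.

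Your normalization $\mathrm{div}_\Omega Z\equiv 1$ gives instead $\varphi_t^*\Omega=e^t\Omega$: volume expands in one time direction and contracts in the other. If the relevant half-orbits happen to run in the contracting direction, then $\mathrm{vol}_\Omega(S_t)\to 0$ is perfectly compatible with $S_t\to S_\infty$ and there is no contradiction. Concretely, at a point of $\Sigma_{tan}$ the linearization of $Z$ has trace $1$; when $\Sigma_{tan}$ is a curve one eigenvalue is $0$ and the other is $1$, so locally $Z\sim u\,\partial_u$ and \emph{every} nearby point has a backward half-orbit of finite length ending on $\Sigma_{tan}$, with area contracting like $e^{-t}$. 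Your proposed ``normal-form analysis of the zeros'' would therefore have to rule out exactly the generic local picture, which it cannot. The missing idea is precisely the inequality $|\mathrm{div}\,\mathcal Z|\le K|\mathcal Z|$ for the right choice of generator; note this says the divergence vanishes on $\Sigma_{tan}$, the opposite of your constant-$1$ situation.

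Two smaller points. First, the sentence ``each orbit is $1$-rectifiable, hence $\mathcal H^2$-null'' is true but beside the point: $\mathcal X^x$ is an a priori uncountable union of such orbits. Second, your area form $\Omega=d\omega|_\Sigma$ degenerates on a set strictly larger than $\Sigma_{tan}$: in the paper's local frame one computes $d\omega|_\Sigma=(A^1_{x_2}-A^2_{x_1})\,dx_1\wedge dx_2$, which on $\Sigma$ equals $(A^2_{x_3}A^1-A^1_{x_3}A^2)\,dx_1\wedge dx_2$ and can vanish where $\mathcal Z\ne 0$. So the patching you describe is not just cosmetic. The localization (Lemma~\ref{LEMxbar1}) and the rectifiability of $\Sigma_{tan}$ (Lemma~\ref{LEM1rect}) are also genuine ingredients that your sketch does not supply.
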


The key idea of the proof of Theorem \ref{THMdim3} is to observe that the divergence of the vector field which generates the trace of the distribution on $\Sigma$ is controlled by its norm (see Lemma \ref{LEMlocal1}). To our knowledge, such an observation has never been made nor used before. This idea plays also a major role for our second result which is concerned with the real-analytic case.\\

Let us now assume that both $M$ and $\Delta$ are real-analytic with $\Delta$ of rank two and $M$ of dimension three. By total nonholonomicity, the set $\Sigma$ is a closed analytic set in $M$ of dimension $\leq 2$ and for every $x\in \Sigma$ there is an open neighborbood $\mathcal{V}$ of $x$ and a non-zero analytic function $h:\mathcal{V} \rightarrow \R$ such that $\Sigma \cap \mathcal{V} = \{h=0\}$. In general, this analytic set admits singularities, that is some points in a neighborhood of which $\Sigma$  is not diffeomorphic to a smooth surface.  To prove our second result, we will show that techniques from resolution of singularities allow to recover what is needed to apply the ideas that govern the proof of Theorem \ref{THMdim3}.  Since resolution of singularity is an algebraic process which applies to spaces that include a functional structure and not to sets, we need, before stating Theorem \ref{THMdim3sing}, to introduce a few notions from analytic geometry, we refer the reader to \cite{h73,n66,t71} for more details.  We will consider analytic spaces  $(X,\mathcal{O}_X:=\mathcal{O}_M / \mathcal{I})$ where $X $ is an analytic set in $M$ and $\mathcal{I}$ is a principal reduced and coherent ideal sheaf with support $X$, which means that $X $ admits  a locally finite covering by open sets $(U_{\alpha})_{\alpha \in \mathcal{A}}$ and that there is a family of analytic functions $(h_{\alpha})_{\alpha \in \mathcal{A}}: U_{\alpha} \to \mathbb{R}$  such that the following properties are satisfied:
\begin{itemize}
\item[(i)] For any $\alpha, \beta \in \mathcal{A}$, there exists an analytic function $\xi: U_{\alpha} \cap U_{\beta} \to \mathbb{R}$ such that $\xi(x)\neq 0 $ and $h_{\alpha}(x) = \xi(y) h_{\beta}(x)$ for all $x\in U_{\alpha} \cap U_{\beta}$.
\item[(ii)] For every $\alpha \in \mathcal{A}$, $X\cap U_{\alpha} = \{x \in U_{\alpha} \, \vert \, h_{\alpha}(x)=0\}$ and the set $\{x\in U_{\alpha}\, \vert \, h_{\alpha}(x) = 0 \text{ and } d_x h_{\alpha}=0\}$ is a set  of codimension at least two  in $U_{\alpha}$.
\end{itemize}
The set of singularities or singular set of an analytic space $(X,\mathcal{O}_X)$, denoted by $\mbox{Sing} ((X,\mathcal{O}_X)$,   is defined as  the union of the sets $\{x\in U_{\alpha} \, \vert \,  h_{\alpha}(x) = 0 \text{ and } d_x h_{\alpha}=0\}$ for $\alpha \in \mathcal{A}$. By the properties (i)-(ii) above, $\mbox{Sing} (X,\mathcal{O}_X)$ is an analytic subset of $M$ of codimension at least two, so it can be stratified by strata  $\Gamma_0, \Gamma_1$ respectively of dimension zero and one, where $\Gamma_0$ is a locally finite union of points and $\Gamma_1$ is a locally finite union of analytic submanifolds of $M$ of dimension one. Then, for every $x \in \mbox{Sing} (X,\mathcal{O}_X)$ we define the tangent space $T_x\mbox{Sing} (X,\mathcal{O}_X)$    to $\mbox{Sing} (X,\mathcal{O}_X)$ at $x$ as $\{0\}$ if $x$ belongs to $\Gamma_0$ and $T_x\Gamma_1$ if $x$ belongs to $\Gamma_1$. The proof of our second result is based on the resolution of singularities in the setting of coherent ideals and analytic spaces described above which was obtained by Hironaka \cite{h64,h73} (in fact, we will follow the modern proof of Hironaka's result given by Bierstone and Milman which includes a functorial property \cite{bm97,bm08} - see also \cite{kollarbook,w05} and references therein). In particular, it requires the Martinet surface $\Sigma$ to have the structure of a coherent analytic space. This fact is proven in Appendix \ref{secMartinetspace}, according to it we use from now the notation $\Sigma_{\Delta}=(\Sigma,\mathcal{O}_{\Sigma})$ to refer to the analytic space defined by the Martinet surface. We are now ready to state our second result.

\begin{theorem}\label{THMdim3sing}
Let $M$ be an analytic manifold of dimension $3$ and $\Delta$ a rank-two totally nonholonomic analytic distribution on $M$, assume that 
\begin{eqnarray}\label{ASSTHM2}
\Delta(x) \cap T_x\mbox{Sing}(\Sigma_{\Delta}) = T_x\mbox{Sing}(\Sigma_{\Delta}) \qquad \forall x \in  \mbox{Sing}(\Sigma_{\Delta}).
\end{eqnarray}
Then for every $x\in M$ the set $\mathcal{X}^x$ has $2$-dimensional Hausdorff measure zero.
\end{theorem}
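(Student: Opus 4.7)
The plan is to reduce Theorem \ref{THMdim3sing} to the smooth case treated in Theorem \ref{THMdim3} by applying a resolution of singularities to the coherent analytic space $\Sigma_{\Delta}$, lifting the characteristic vector field whose orbits carry the singular horizontal paths, and verifying that the divergence-by-norm control underlying Theorem \ref{THMdim3} (Lemma \ref{LEMlocal1}) survives this lift. The starting point is the standard fact (Proposition \ref{PROPsingmartinet}) that singular horizontal paths are integral curves of the line field $\Delta\cap T\Sigma$ on the smooth part of $\Sigma$; controlling the geometry of these orbits on a desingularization and pushing the estimate back through a proper map will suffice.

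First I would invoke the Bierstone--Milman form of Hironaka's theorem, applied to $\Sigma_{\Delta}$, to produce a proper analytic map $\pi:\widetilde{M}\to M$, an isomorphism over $M\setminus\mbox{Sing}(\Sigma)$, such that the strict transform $\widetilde{\Sigma}$ of $\Sigma$ is a smooth analytic surface and $\pi^{-1}(\Sigma)$ is a simple normal crossings divisor $E$ in $\widetilde{M}$. Pulling back local generators $X^1,X^2$ of $\Delta$ yields a rank-two distribution $\widetilde{\Delta}$ on the complement of $E$; the central task is to analyze the line field $\widetilde{\Delta}\cap T\widetilde{\Sigma}$ on $\widetilde{\Sigma}\setminus E$ and to extend it analytically across $\widetilde{\Sigma}\cap E$ to a vector field $\widetilde{X}$ defined on all of $\widetilde{\Sigma}$, possibly vanishing along components of $E\cap\widetilde{\Sigma}$.

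This extension is where hypothesis (\ref{ASSTHM2}) plays the decisive role: the condition $\Delta(x)\supset T_x\mbox{Sing}(\Sigma_{\Delta})$ at every $x\in\mbox{Sing}(\Sigma_{\Delta})$ says that the smooth one-dimensional stratum $\Gamma_1$ is everywhere tangent to $\Delta$, hence to the characteristic line field on the smooth part of $\Sigma$. Using the functoriality of the Bierstone--Milman algorithm, one argues inductively along the tower of admissible blow-ups that this tangency propagates to each intermediate strict transform, so that the lifted line field does extend analytically to $\widetilde{X}$ on $\widetilde{\Sigma}$. With a smooth Martinet-type surface $\widetilde{\Sigma}$ carrying an analytic characteristic field $\widetilde{X}$ in hand, one then runs the argument of Theorem \ref{THMdim3} essentially verbatim: a suitably adapted version of Lemma \ref{LEMlocal1} bounds the divergence of $\widetilde{X}$ by its norm, giving a Gr\"onwall-type volume-distortion estimate along its flow, and hence vanishing $2$-dimensional Hausdorff measure for the reachable set of $\widetilde{X}$ from any point of $\widetilde{\Sigma}$. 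Since $\pi$ is proper and analytic, and therefore locally Lipschitz, the image $\mathcal{X}^x\subset \pi(\widetilde{\mathcal{X}}^{\widetilde{x}})$ also has vanishing $2$-dimensional Hausdorff measure.

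The main obstacle I anticipate lies in the middle step: showing that the characteristic line field actually lifts to an analytic vector field on $\widetilde{\Sigma}$ for which the divergence estimate remains effective. This requires more than a black-box application of Hironaka: one must verify that the centers chosen by the functorial algorithm applied to $\Sigma_{\Delta}$ are tangent to $\Delta$ at every stage, and that the Riemannian norm appearing in Lemma \ref{LEMlocal1} transforms in a controlled (monomial) way under the monoidal substitutions produced by the resolution. Hypothesis (\ref{ASSTHM2}) supplies tangency at the first stage; propagating it along the tower, while simultaneously tracking the pull-back of the volume form and of the generators of $\Delta$ through each blow-up chart, is the technical heart of the argument. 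Once this is established, the Hausdorff measure estimate follows formally from the divergence bound and a finite-dimensional Sard-type reasoning on each component of the exceptional divisor.
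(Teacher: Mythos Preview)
Your overall architecture matches the paper's: resolve $\Sigma_{\Delta}$, lift the characteristic line field, and run a divergence/volume argument upstairs. You also correctly locate the role of hypothesis (\ref{ASSTHM2}) in extending the lifted field across the exceptional divisor; this is exactly what the paper does in Proposition \ref{PROPresolution}(iv) via the notion of an $\mathcal{S}$-adapted distribution and an induction along the blow-up tower.

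The gap is in the sentence ``a suitably adapted version of Lemma \ref{LEMlocal1} bounds the divergence of $\widetilde{X}$ by its norm.'' After resolution, $\widetilde{\Sigma}$ is \emph{not} the Martinet surface of the pulled-back distribution (that preimage is $E\cup\widetilde{\Sigma}$, and the pulled-back generators $X^*,Y^*$ have poles along $E$), so the mechanism of Lemma \ref{LEMlocal1}---which rests on $[X^1,X^2]=0$ along the Martinet set---does not apply to $\widetilde{X}$ with the smooth induced volume on $\widetilde{\Sigma}$. The paper does not attempt such a bound upstairs. Instead it keeps the divergence control \emph{downstairs}: Lemma \ref{LEMlocalsing} gives $|\mbox{div}^g\mathcal{Z}|\le K|\mathcal{Z}|$ for the ambient divergence on $\mathcal{V}$, and the whole point of Proposition \ref{PROPMAIN} is to manufacture a \emph{singular} volume form $\tilde{\omega}=\frac{\beta}{\alpha\xi}\,\tilde{\lambda}$ on $\widetilde{\Sigma}\setminus E$ for which the exact identity $\mbox{div}^{\tilde{\omega}}(\mathcal{Z}^*)=[\mbox{div}^g(\mathcal{Z})]^*$ holds (Lemma \ref{LEMstep2}). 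The divergence-by-norm inequality is then inherited from downstairs, not proved anew.

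This change forces a second ingredient you are missing: because $\tilde{\omega}$ degenerates along $E$, the statement $\mbox{vol}^{\tilde{\omega}}(S_t^*)\to 0$ is no longer a consequence of dominated convergence; it requires the separate Lemma \ref{LEMvolume}, which compares the flow of $\mathcal{Z}^*$ with that of the renormalized field $Z=\frac{\beta}{\alpha}\mathcal{Z}^*$ (the one that \emph{does} extend smoothly and tangentially to $E$) via a time-reparametrization and an Egorov argument. Your proposed endgame---prove $\mathcal{H}^2=0$ upstairs for a smooth metric and push forward by a Lipschitz map---would bypass this, but it rests on the unavailable smooth-metric divergence bound. So the plan as written would stall precisely at the step you flagged as the main obstacle; the resolution is not a sharper monomial estimate on norms but the construction of the weighted form $\tilde{\omega}$ and the accompanying volume-limit lemma.
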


The assumption (\ref{ASSTHM2})  means that at each singularity of $\Sigma_{\Delta}$ the distribution $\Delta$ generates the tangent space to $\mbox{Sing}(\Sigma_{\Delta})$. It is trivially satisfied  in the case where $\Sigma_{\Delta}$ has only  isolated singularities. 

\begin{corollary}
Let $M$ be an analytic manifold of dimension $3$ and $\Delta$ a rank-two totally nonholonomic analytic distribution on $M$, assume that $\Sigma_{\Delta}$ has only isolated singularities, that is $\Gamma^1=\emptyset$. Then for every $x\in M$ the set $\mathcal{X}^x$ has $2$-dimensional Hausdorff measure zero.
\end{corollary}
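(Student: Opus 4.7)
The plan is to recognize this corollary as an immediate specialization of Theorem \ref{THMdim3sing}: all that needs to be done is to verify that the hypothesis (\ref{ASSTHM2}) becomes vacuous when the singular set of $\Sigma_{\Delta}$ consists only of isolated points. No further analytic or geometric input is needed; in particular, the nontrivial work of controlling divergences and resolving singularities is entirely packaged inside the theorem being invoked.

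First, I would invoke the stratification $\mbox{Sing}(\Sigma_{\Delta}) = \Gamma_0 \cup \Gamma_1$ introduced just before Theorem \ref{THMdim3sing}, where $\Gamma_0$ is a locally finite collection of points and $\Gamma_1$ a locally finite union of one-dimensional analytic submanifolds. The standing hypothesis $\Gamma_1 = \emptyset$ then forces $\mbox{Sing}(\Sigma_{\Delta}) = \Gamma_0$, so every singular point of $\Sigma_{\Delta}$ is isolated. Next, I would recall the convention, also fixed in the excerpt, that the tangent space $T_x\mbox{Sing}(\Sigma_{\Delta})$ is declared to be $\{0\}$ at each point of $\Gamma_0$.

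Putting these two observations together, for every $x \in \mbox{Sing}(\Sigma_{\Delta})$ one has
\begin{equation*}
\Delta(x) \cap T_x\mbox{Sing}(\Sigma_{\Delta}) \;=\; \Delta(x) \cap \{0\} \;=\; \{0\} \;=\; T_x\mbox{Sing}(\Sigma_{\Delta}),
\end{equation*}
so the assumption (\ref{ASSTHM2}) of Theorem \ref{THMdim3sing} holds trivially. Theorem \ref{THMdim3sing} then applies verbatim and yields that $\mathcal{X}^x$ has $2$-dimensional Hausdorff measure zero for every $x \in M$.

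There is no real obstacle in this argument; the only point at which one could slip is the convention for the tangent space at a zero-dimensional stratum, which must be read as $\{0\}$ rather than being left undefined, but this is exactly what the excerpt stipulates. The entire mathematical content of the corollary is therefore carried by Theorem \ref{THMdim3sing}.
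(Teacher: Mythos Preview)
Your argument is correct and matches the paper's approach exactly: the paper does not even give a separate proof of the corollary, but simply remarks that assumption (\ref{ASSTHM2}) ``is trivially satisfied in the case where $\Sigma_{\Delta}$ has only isolated singularities,'' which is precisely the verification you carry out.
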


If the assumption (\ref{ASSTHM2}) in Theorem \ref{THMdim3sing} is not satisfied, that is if there are points $x$ in $\mbox{Sing}(\Sigma_{\Delta})$ where $\Delta(x)$ is transverse to  $T_x\mbox{Sing}(\Sigma_{\Delta})$, then our approach leads to the study of possible concatenations of homoclinic orbits of a smooth vector field defined on $\Sigma$ which vanishes on $\mbox{Sing}(\Sigma_{\Delta})$.  Let us illustrate what may happen by treating an example. In  $\R^3$ let us consider the totally nonholonomic analytic distribution $\Delta$ spanned by the two vector fields
$$
X=\partial_y  \quad \mbox{and} \quad Y = \partial_x + \left[ \frac{y^3}{3}-x^2 y (x+z)\right] \, \partial_z.
$$
We check easily that $[X,Y] = \left[ y^2 -x^2 (x+z)\right] \, \partial_z$, so that the Martinet surface is given by
$$
\Sigma = \Bigl\{ y^2-x^2(x+z)=0\Bigr\}.
$$

\begin{figure}\label{fig1}
\begin{center}
\includegraphics[width=7cm]{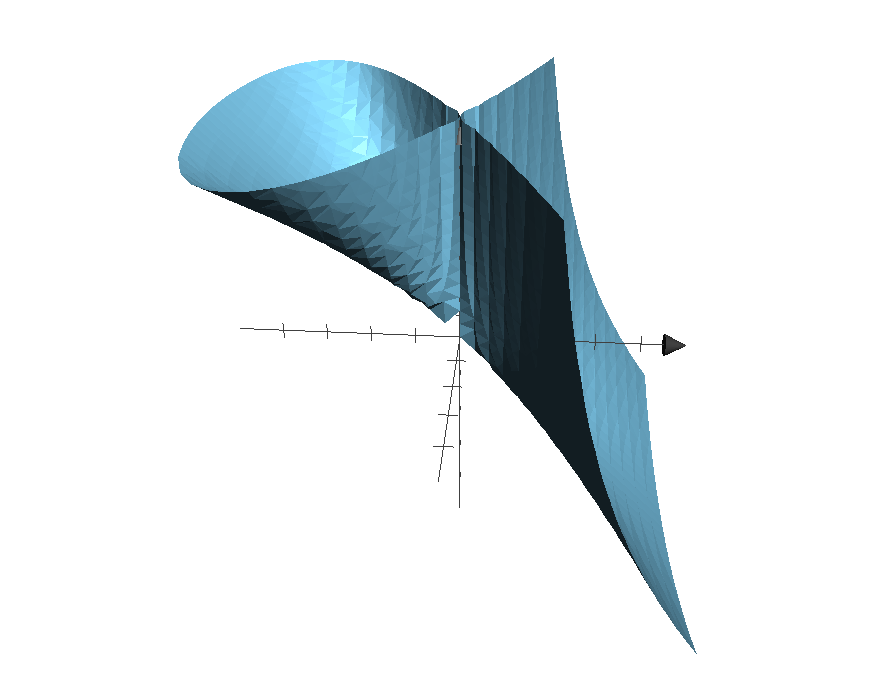}
\caption{The Martinet surface $\Sigma$}
\end{center}
\end{figure}

The Martinet surface $\Sigma$ is an analytic set whose singular set $\mbox{Sing}(\Sigma_{\Delta})$ is the vertical axis $x=y=0$, see Figure \ref{fig1}. We observe that the distribution $\Delta$ is transverse to the $1$-dimensional subset $\mathcal{S}:=\{x=y=0, z>0\}$ of $\mbox{Sing}(\Sigma_{\Delta})$, so the assumption of Theorem \ref{THMdim3sing} is not satisfied. Let $\mathcal{L}$ be the loop part  of $\Sigma$, that is the set of points in $\Sigma$ of the form $(x,y,z)$ with $x\leq 0$, for each $P=(x_P,y_P,z_P) \in \mathcal{S}$ we can construct an horizontal path $\gamma_P : [0,1] \rightarrow \mathcal{L}$ such that $\gamma_P(0)=P$ and $\gamma_P(1)=Q=(x_Q,y_Q,z_Q)$ belongs to $\mathcal{S}$ with $z_Q<z_P$. Let us show how to proceed. First, we notice that  the trace of $\Delta$ on $\Sigma$ outside its singular set is generated by the smooth vector field (we set $h(x,y,z)=y^2-x^2(x+z)$)
$$
\mathcal{Z} = \left( X^1\cdot h\right) \, X^2 - \left( X^2\cdot h\right) \, X^1= 2y \,\partial_x + \left[ 3x^2 + 2x(x+z) \right] \, \partial_y - \frac{4y^4}{3} \, \partial_z,
$$
which  is collinear (on $\Sigma$) to the vector field
$$
\mathcal{X} =  -2xy \,\partial_x - \left[ 3x^3 + 2x^2(x+z) \right] \, \partial_y + \frac{4xy^4}{3} \, \partial_z = -2xy \,\partial_x - \left[ 3x^3 + 2y^2 \right] \, \partial_y + \frac{4xy^4}{3} \, \partial_z.
$$
 If we forget about the $z$-coordinate, the projection $\hat{\mathcal{X}}$ of $\mathcal{X}$ onto the plane $(x,y)$ (whose phase portrait is drawn in Figure 2) has a unique equilibrium at the origin, has the vertical axis as invariant set where it coincides with $-2y^2 \, \partial_y$, and it is equal to $-3x^3 \, \partial_y$ on the horizontal axis. Moreover, $\hat{\mathcal{X}}$  enjoyes a property of symmetry with respect to the horizontal axis, there holds $\hat{X}(x,-y)=-X^1(x,y) \, \partial_x + X^2(x,y) \, \partial_y$ where $\hat{X}=X^1 \, \partial_x + X^2\, \partial_y$. Consequently, the quadrant $x\leq 0, y>0$ is invariant with respect to $\hat{\mathcal{X}}$ and  any trajectory $P(t)=(x(t),y(t),z(t))$ of $\mathcal{X}$ starting from a point $\bar{P}=P(0)$ of the form $(\bar{x},0,\bar{z})$ with $\bar{x}=-\bar{z}<0$  satisfies

\begin{figure}\label{fig2}
\begin{center}
\includegraphics[width=7cm]{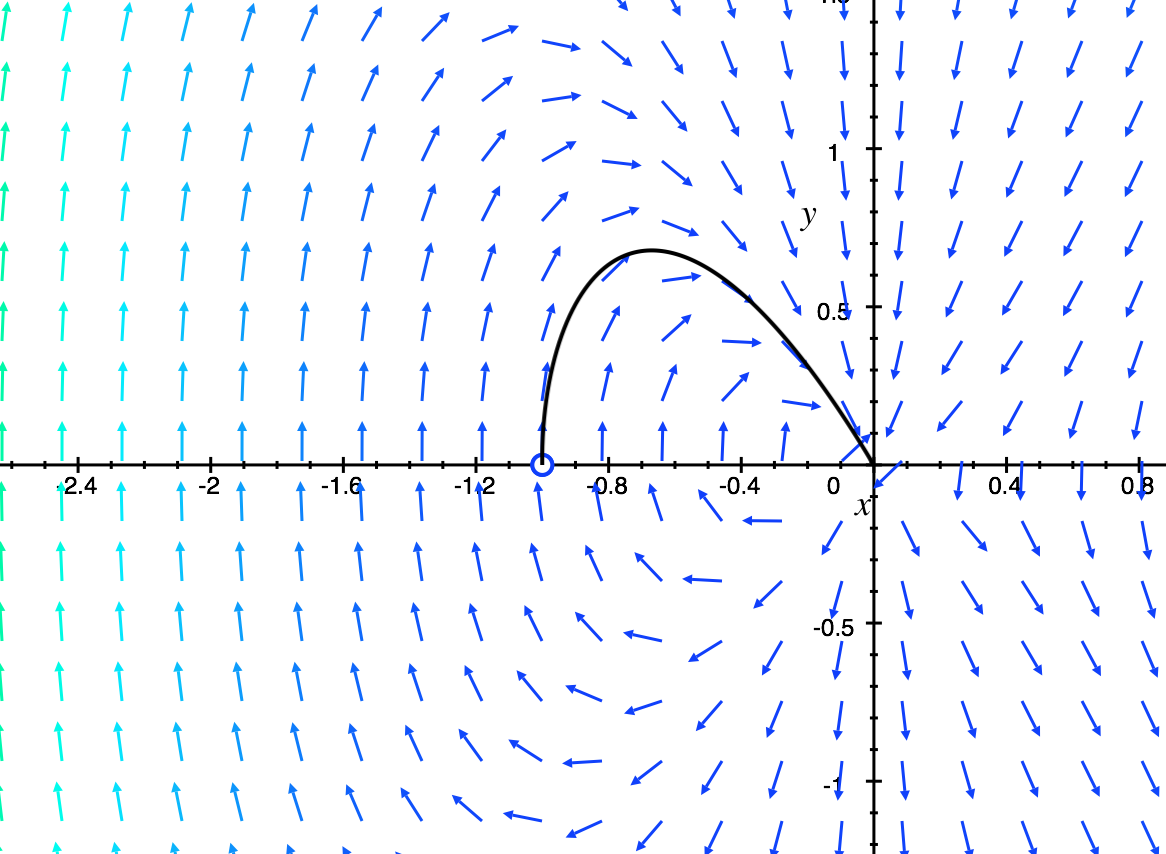}
\caption{The phase portrait of $\hat{\mathcal{X}}$}
\end{center}
\end{figure}

$$
x(t) < 0, \quad y(t) > 0, \quad \dot{x}(t) > 0, \quad \dot{z}(t) < 0 \quad \mbox{and} \quad P(t) \in \Sigma \qquad \forall t >0.
$$
Moreover, we can check that the curvature of its projection $\hat{P}([0,+\infty)]$ has the sign of $\dot{x}\ddot{y}-\dot{y}\ddot{x}$ which is equal to  $18x^7-24x^4y^2$ so it has a constant sign. In conclusion, the curve $\hat{P}(t)$ converges to the origin as $t$ tends to $\pm\infty$, it is symmetric with respect to the horizontal axis,  and  it surrounds a convex surface which is contained in the rectangle $[\bar{x},0] \times [-|\bar{x}|^{3/2},|\bar{x}|^{3/2}]$ (because $|y|=|x| \sqrt{x+z} \leq |x| \sqrt{z}$ which is $\leq |\bar{x}|^{3/2}$ on $[0,+\infty$). Then $\hat{P}: \R \rightarrow \R^2$ is homoclinic and its length $\ell(\hat{P})$ satisfies 
$$
2 \left| \bar{x} \right| \leq \ell (\hat{P}) := \int_{-\infty}^{+\infty} \left| \dot{\hat{P}}(t) \right| \, dt \leq 2 \left| \bar{x} \right| + 4  \left| \bar{x} \right|^{3/2}. 
$$
Moreover, we check easily that (see Appendix \ref{appendixcomputations})
\begin{eqnarray}\label{18july1}
 - \frac{2}{3} z(-\infty)^{9/2} \, \ell ( \hat{P} ) \leq z(+\infty)-z(-\infty) \leq -\frac{z(+\infty)^{11/2}}{35}, 
 \end{eqnarray}
\begin{eqnarray}\label{18july2}
z(-\infty) \leq \ell (\hat{P}) \left[ \frac{1}{2} + \frac{|z(-\infty)|^{9/2}}{3} \right],
\end{eqnarray}
\begin{eqnarray}\label{18july3}
\ell (\hat{P}) \leq \ell (P) :=\int_{-\infty}^{+\infty} \left| \dot{P}(t) \right| \, dt
\end{eqnarray}
and
\begin{eqnarray}\label{18july4}
\ell (P)  \leq 2 \, \left[ \left| z(-\infty) \right| + 2  \left| z(-\infty) \right|^{3/2}\right]  \, \left[ \sqrt{2} +   \frac{2}{3} |z(-\infty)|^{9/2} \right].
\end{eqnarray}
In conclusion, if we fix $z_0>0$, then there is $\bar{x}_0<0$ such that the orbit $P_0(t)=(x_0(t),y_0(t),z_0(t))$ starting from $(\bar{x}_0,0,-\bar{x}_0)$ at time $t=0$ satisfies $z_0 = z_0(-\infty)  :=\lim_{t \rightarrow -\infty} z_0(t)$, $z_0(+\infty) := \lim_{t \rightarrow -\infty} z_0(t)= z_1$ for some $z_1<z_0$, has length $\ell_0:=\ell (P)$ and the inequalities (\ref{18july1})-(\ref{18july4}) are satisfied with $z_0(+\infty), z_0(-\infty), \ell_0(\hat{P}_0), \ell_0(P_0)$. If we repeat this construction from $z_1$, then we get a decreasing sequence of positive real numbers $\{z_k\}_{k\in \N}$ together with  a sequence of lengths $\{\ell_k\}_{l\in \N}$ such that  for every $k\in \N$,
\begin{eqnarray}\label{18july5}
-\frac{2}{3} z_k^{9/2} \ell_k\leq z_{k+1}-z_k \leq -\frac{z_{k+1}^{11/2}}{35}, \quad z_k \leq K\, \ell_k, \quad \ell_k \leq K \, \left[ z_k + 2 z_k^{3/2}\right],
\end{eqnarray}
where $K>0$ stands for the maximum of $ 1/2+ |z_0|^{9/2}/3$ and $2\sqrt{2}+(4/3)z_0^{9/2}$. Moreover, the sequences $\{z_k\}_{k\in \N}, \{\ell_k\}_{l\in \N}$ are associated with a sequence of singular horizontal paths $\{\gamma_k\}_{k\in \N}$ of length $\ell_k$ which joins $z_k$ to $z_{k+1}$ for every $k$. Therefore, concatenating the paths $\gamma_1, \gamma_2, \ldots,$ we get a singular horizontal path $\gamma^{z_0}:[0,+\infty) \rightarrow \mathcal{L}$ which depends upon the starting point $z_0$ and which tends to $0$ as $t$ tends to $+\infty$, see Figure 3. The union of the all singular paths $\gamma^{z_0}$ obtained in this way with $z_0\in \mathcal{S}$ will fill the loop-part $\mathcal{L}$ of $\Sigma$, so we may think that the set of horizontal paths starting from the origin reach a set of positive $2$-dimensional Hausdorff measure. 

\begin{figure}\label{fig3}
\includegraphics[width=6cm]{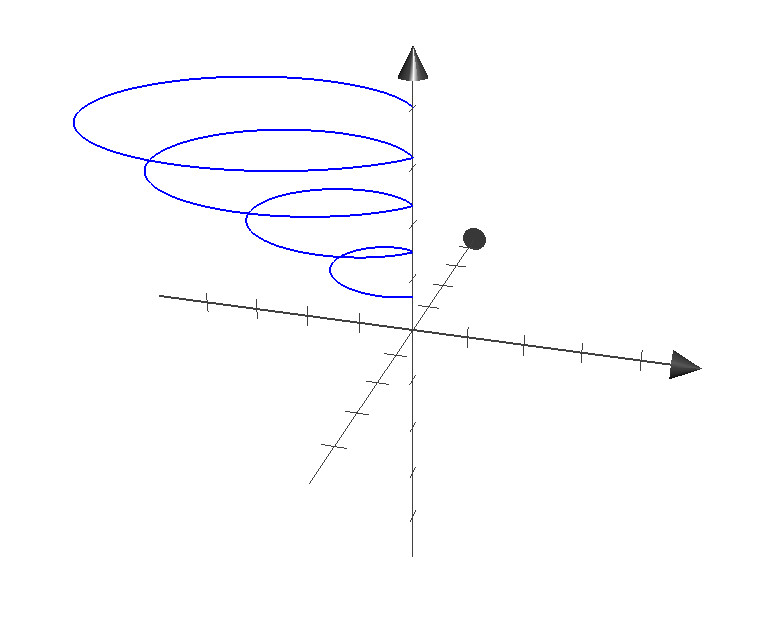} \hfill
\includegraphics[width=6cm]{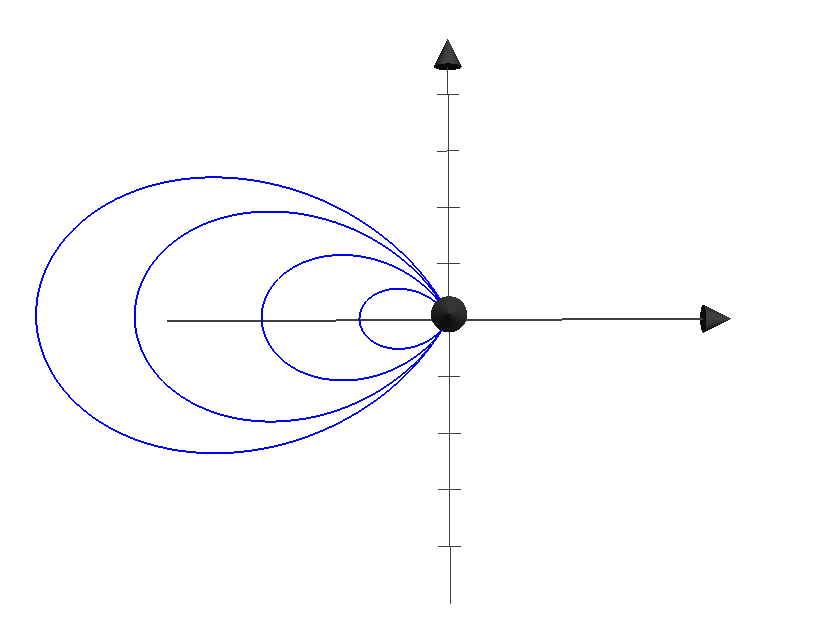}
\caption{A concatenation of homoclinic singular orbits converging to $0$ in the $3D$-space and from above}
\end{figure}

Hopefully, we can prove that all the paths constructed above have infinite length and so are not admissible because they do not allow to construct horizontal paths starting from the origin. To see this, assume that there are sequences $\{z_k\}_{k\in \N}, \{\ell_k\}_{l\in \N}$ satisfying (\ref{18july5}) such that $\sum_{k\in \N}\ell_k$ is finite.  By the second inequality in (\ref{18july5}) the sum $\sum_{k\in \N}z_k$ should be finite as well. But since
$$
z_{k+1}-z_k \geq -\frac{2}{3} z_k^{9/2} \ell_k \geq -\frac{2K}{3} z_k^{9/2} \, \left[ z_k + 2 z_k^{3/2}\right] \geq  -\frac{2K}{3} z_k^{11/2},
$$
for $k$ large enough, we have for for every integers $k, p$
$$
z_{k+p}-z_k \geq -\frac{2K}{3} z_k^{11/2}p.
$$
Which means that $z_{k+p} \leq z_k/2$ holds when $p\geq \frac{3}{4K} \frac{1}{z_k^{9/2}}=:p_k$ so that
$$
\sum_{j=k}^{k+p_k} z_j \geq p_k \, z_k \geq  \frac{3}{4K} \frac{1}{z_k^{7/2}} \longrightarrow_k \infty
$$
and contradicts the finiteness of $\sum_{k\in \N} z_k$.  As we can see, the estimates (\ref{18july5}) prevent the existence of horizontal paths obtained as concatenations of infinitely many homoclinic orbits. The availability of such estimates and more generally possible extensions of Theorem \ref{THMdim3sing} in absence of assumption  (\ref{ASSTHM2}) will be the subject of a subsequent paper. \\

The paper is organized as follows: The proof of Theorem \ref{THMdim3} is given in Section \ref{proofTMHdim3}.  The Section \ref{proofTHMdim3sing} is devoted to the proof of Theorem \ref{THMdim3sing} which is divided in two parts. The first part (Section \ref{SECfirstpart}) consists in extending  to the singular analytic case an intermediate result already used in the proof of Theorem \ref{THMdim3} and the second part  (Section \ref{SECsecondpart}) presents the result from resolution of singularities (Proposition \ref{PROPMAIN}) which allows to apply the arguments of divergence which are used in the proof of Theorem \ref{THMdim3}. The Section \ref{SECconical} in between those two sections, which  should be seen as a warm-up to Section \ref{SECsecondpart}, is dedicated to the study of the specific case of a conical singularity. There, we explain how a simple blowup yields Proposition \ref{PROPMAIN} for this case. The proof of Proposition \ref{PROPMAIN} is given in Section \ref{SECPROPMAIN} and all the material and results from resolution of singularities in manifolds with corner which are necessary for its proof are provided in Section \ref{secRS}. Finally, the appendix gathers the proofs of several results which are referred to in the body of the paper.\\

\noindent {\bf Acknowledgments.} The first author would like to thank Edward Bierstone for several useful discussions. The second author is grateful to Adam Parusinski for enlightening discussions. We would also like to thank the hospitality of the Fields Institute and the Universidad de Chile.

\section{Proof of Theorem \ref{THMdim3}}\label{proofTMHdim3}

Throughout this section we assume that $\Delta$ is a rank-two totally nonholonomic distribution on $M$ (of dimension three) whose Martinet surface $\Sigma$ is a smooth submanifold. First of all, we notice that, since for every $x\in M$ the set $\mathcal{X}^x$ is contained in $\Sigma$, the result of Theorem \ref{THMdim3} holds if $\Sigma$ has dimension one. So we  may assume from now on that $\Sigma$ has dimension two. Let us define two subsets $\Sigma_{tr}, \Sigma_{tan}$ of $\Sigma$ by
$$
\Sigma_{tr} := \Bigl\{ x\in \Sigma \, \vert \,  \Delta(x) \cap T_x\Sigma \mbox{ has dimension one}\Bigr\} \quad \mbox{and} \quad \Sigma_{tan} := \Bigl\{ x\in \Sigma \, \vert \,  \Delta(x) = T_x\Sigma \Bigr\}.
$$
By construction, $\Sigma_{tr}$ is an open subset of $\Sigma$  and $\Sigma_{tan} = \Sigma \setminus \Sigma_{tr}$ is closed. The following holds.

\begin{lemma}\label{LEM1rect}
The set  $\Sigma_{tan}$ is countably smoothly $1$-rectifiable, that is it can be covered by countably many submanifolds of dimension one.
\end{lemma}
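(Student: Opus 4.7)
The plan is to work locally on $\Sigma$ and reduce the statement to the study of common zeros of two smooth functions on a $2$-manifold, using total nonholonomicity to bound the order of vanishing.

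Fix $x_0 \in \Sigma$. In a chart $\mathcal{V}$ of $M$ around $x_0$, choose coordinates $(s,t,n)$ on $M$ so that $\Sigma \cap \mathcal{V} = \{n = 0\}$, together with a smooth local frame $X^1, X^2$ of $\Delta$. With $h := n$ as local defining function and $g_i(s,t) := (X^i h)(s,t,0)$, one has $\Sigma_{tan} \cap \mathcal{V} = \{g_1 = g_2 = 0\}$, and by second-countability of $\Sigma$ it suffices to cover this common zero set by countably many smooth $1$-submanifolds. The central claim is that at every $x_0 \in \Sigma_{tan}$ the joint order of vanishing of $(g_1, g_2)$ is finite, i.e.\ some partial derivative $\partial^\alpha g_i(x_0)$ of finite order is nonzero. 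To establish it, note that at $x_0 \in \Sigma_{tan}$ one has $X^i(x_0) \in T_{x_0}\Sigma$, while the Martinet condition forces $[X^1, X^2] \in \Delta$ on $\Sigma$ and hence $[X^1, X^2] h|_\Sigma \in (g_1, g_2)$. Using the local identity $X^j(X^i h)|_\Sigma = \hat X^j g_i + g_j h^{(i)}$ (with $\hat X^j$ the projection of $X^j$ onto $T\Sigma$ and $h^{(i)} := \partial_n(X^i h)|_\Sigma$), an induction on bracket depth $r$ shows that if $(g_1, g_2)$ vanishes to order $\geq r$ at $x_0$, i.e.\ all partial derivatives of $g_1, g_2$ of order $\leq r - 1$ vanish at $x_0$, then every iterated Lie bracket of $X^1, X^2$ of depth $\leq r + 1$ applied to $h$ vanishes at $x_0$. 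The H\"ormander condition produces an iterated bracket at $x_0$ with nonzero $h$-component, thereby bounding the vanishing order at $x_0$ by a finite integer.

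Granted the claim, decompose $\Sigma_{tan} \cap \mathcal{V} = \bigsqcup_{m \geq 1} F_m$, where $F_m$ denotes the points at which the joint vanishing order equals $m$. Each $F_m$ is the finite union, over pairs $(\alpha, i)$ with $|\alpha| = m$ and $i \in \{1, 2\}$, of sets $F_{m, \alpha, i} := \{x \in F_m : \partial^\alpha g_i(x) \neq 0\}$. For each such triple, write $\partial^\alpha = \partial_\nu \partial^{\alpha'}$ with $\nu \in \{s, t\}$ and $|\alpha'| = m - 1$, and set $\tilde g := \partial^{\alpha'} g_i$. This function vanishes throughout $F_m$ (all derivatives of order $\leq m - 1$ vanish there), while $\partial_\nu \tilde g = \partial^\alpha g_i \neq 0$ on $F_{m, \alpha, i}$ implies that $d\tilde g$ is nonzero on $F_{m, \alpha, i}$. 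Consequently $\{\tilde g = 0\}$ is a smooth $1$-submanifold of $\Sigma$ near each point of $F_{m, \alpha, i}$ and locally contains it. Countable cover arguments over the triples $(m, \alpha, i)$ and a chart cover of $\Sigma$ then exhibit $\Sigma_{tan}$ as a countable union of subsets of smooth $1$-submanifolds, proving the lemma.

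The main obstacle is the induction underlying the central claim: for a depth-$(r+1)$ bracket $B = [X^{i_1}, B']$, one must show that $B h(x_0) = X^{i_1}(B' h)(x_0) - B' F_{i_1}(x_0)$ with $F_{i_1} = X^{i_1} h$ depends polynomially on partial derivatives of $g_1, g_2$ of order $\leq r$ at $x_0$, each monomial carrying at least one factor of order $\leq r - 1$. This requires tracking the propagation of the formula $X^j(X^i h)|_\Sigma = \hat X^j g_i + g_j h^{(i)}$ through iterated compositions, using that $X^{i_1}(x_0) \in T_{x_0}\Sigma$ and the inductive tangentiality $B'(x_0) \in T_{x_0}\Sigma$ (inherited from the depth-$\leq r$ case), together with the Martinet identity $[X^i, X^j] h|_\Sigma \in (g_1, g_2)$ responsible for the one-step saving of order.
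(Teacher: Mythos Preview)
Your argument is correct and follows the same route as the paper: localize, describe $\Sigma_{tan}$ as the common zero set of the two functions $g_i=(X^ih)|_\Sigma$, exploit total nonholonomicity to bound their joint vanishing order, and then invoke the implicit function theorem to produce the covering $1$-submanifolds. The paper streamlines your bracket induction by choosing the special frame $X^i=\partial_{x_i}+A^i\partial_{x_3}$ and using the normal component $Z^{r-1}_3$ of an iterated bracket (via the one-line identity $Z^r_3(x)=-(Z^{r-1}_3)_{x_{i_r}}(x)$ at points where $A^i$ and its first tangential derivatives vanish) as the function with nonzero tangential derivative, in place of your $\partial^{\alpha'}g_i$, but the underlying logic is identical.
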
 

\begin{proof}[Proof of Lemma \ref{LEM1rect}]
We need to show that for every $x\in \Sigma_{tan}$ there is an open neighborhood $\mathcal{V}_x$ of $x$ in $\Sigma$ such that $ \Sigma_{tan} \cap \mathcal{V}_x$ is contained in a smooth submanifold of dimension one. Let $\bar{x}\in \Sigma_{tan}$ be fixed. Taking a sufficiently small open neighborhood $\mathcal{V}$ of $\bar{x}$ in $M$ and doing a change of coordinates if necessary we may assume that there is a set of coordinates $(x_1,x_2,x_3)$ such that $\bar{x}=0$ and $\Sigma \cap \mathcal{V} = \left\{ x_3=0\right\}$ and there are  two smooth vector fields $X^1, X^2$ on $\mathcal{V}$ of the form 
$$
X^1 = \partial_{x_1} + A^1 \,  \partial_{x_3} \quad \mbox{and} \quad X^2 = \partial_{x_2} + A^2 \, \partial_{x_3}
$$
where $A^1, A^2$ are smooth functions such that  $A^1(0)=A^2(0)=0$ and 
$$
\Delta(x) = \mbox{Span} \Bigl\{X^1(x), X^2(x)\Bigr\} \qquad \forall x\in \mathcal{V}.
$$
 Then we have $[X^1,X^2]= \left( A^2_{x_1} - A^1_{x_2} + A^1 A^2_{x_3}- A^2 A^1_{x_3} \right) \, \partial_{x_3}$ on $\mathcal{V}$ which yields ($f_{x_i}$ denotes the partial derivative of a smooth function $f$ with respect to the $x_i$ variable)
$$
\Sigma \cap \mathcal{V} = \Bigl\{ x_3=0\Bigr\} =  \Bigl\{ A^1_{x_2}-A^2_{x_1}+ A^1 A^2_{x_3}- A^2 A^1_{x_3}=0\Bigr\},
$$
and moreover a point $x \in \Sigma \cap \mathcal{V}$ belongs to $\Sigma_{tan}$ if and only if $A^1(x)=A^2(x)=0$. Let $\mathcal{O}$ be the set of $x \in \Sigma \cap \mathcal{V}$ for which $A^i_{x_j}(x) \neq 0$ for some pair $i,j $ in $\{1,2\}$.  The set  $\mathcal{O}$ is open in $\Sigma$ and by the Implicit Function Theorem the set $\Sigma_{tan} \cap \mathcal{O}$ can be covered by a finite union of smooth submanifolds of $\Sigma$ of dimension one. Therefore, it remains to show that the set 
$$
 \mathcal{F} := \left( \Sigma_{tan} \cap \mathcal{V} \right) \setminus \mathcal{O} = \Bigl\{ x \in \Sigma_{tan} \cap \mathcal{V} \, \vert \, A^1_{x_1}(x)= A^1_{x_2}(x)= A^2_{x_1}(x)= A^2_{x_2}(x)=0\Bigr\}
 $$
can be covered by countably many smooth submanifolds of dimension one. By total nonholonomicity, for every $x\in \Sigma \cap \mathcal{V}$ there are a least integer $r=r_x\geq 3$ and a $r$-tuple $(i_1, \ldots, i_r) \in \{1,2\}^r$ such that the vector field $Z^r=Z_1^r \partial_{x_1} + Z_2^r \partial_{x_2} + Z_3^r \partial_{x_3} $ defined by 
$$
Z^r :=\left[ \cdots \left[ [X^1,X^2], X^{i_3}], X^{i_4}], \cdots    , X^{i_r}     \right]\right.
$$
satisfies $Z_3^r(x)\neq 0$ and all the vector fields $Z^j= \sum_{i=1}^3 Z^j_i \partial_{x_i}$ for $j=2, \ldots, r-1$ defined by  
$$
Z^j:= \left[ \cdots \left[ [X^1,X^2], X^{i_3}], X^{i_4}], \cdots    , X^{i_{j}}     \right]\right.
$$
satisfy $Z^j_3(x)=0$. Then for every $x\in \mathcal{F}$ we have $Z_3^{r-1}(x)=0$ and
\begin{multline*}
0 \neq Z_3^r(x)  =  \left[  Z^{r-1}, X^{i_r}  \right] (x)\\
 =  A^{i_{r}}_{x_1} (x)\, Z_1^{r-1} (x) +  A^{i_{r}}_{x_2}(x) \, Z_2^{r-1} (x) + A^{i_{r}}_{x_3}(x) \, Z_3^{r-1} (x) \\
 - \left(Z_3^{r-1}\right)_{x_{i_r}} (x)- \left(Z_3^{r-1}\right)_{x_{3}} (x) \, A^{i_r}(x) =  - \left(Z_3^{r-1}\right)_{x_{i_r}} (x).
\end{multline*}
Consequently, taking $\mathcal{V}$ smaller if necessary (in order to use that $X^1, X^2$ are well-defined in a neighborhood of $\bar{\mathcal{V}}$) and using compactness, there is a finite number of smooth functions  $z_1, \ldots, z_N : \mathcal{V} \rightarrow \R$ such that for every $x \in \mathcal{F}$ there are $l(x) \in \{1, \ldots, N\}$ and $i(x) \in \{1,2\}$ such that $z_{l(x)}=0$ and $( z_{l(x)})_{x_i(x)} \neq 0$. We conclude easily by the Implicit Function Theorem.
\end{proof}

Define the singular distribution, that is a distribution with non-constant rank,  $L_{\Delta}$  by
$$
L_{\Delta}(x) = \left\{ \begin{array}{ccl} \{0\} & \mbox{ if }  & x \in \Sigma_{tan} \\
\Delta(x) \cap T_x\Sigma            & \mbox{ if } & x \in \Sigma_{tr} 
\end{array}
\right.
\qquad \forall x \in \Sigma.
$$
We observe that by Proposition \ref{PROPsingmartinet}, for every non-constant singular horizontal path $\gamma:[0,1] \rightarrow M$ there is a set $I$ which is open in $[0,1]$ such that $\gamma(t)\in \Sigma_{tr}$ for any $t\in I$ and $\gamma(t) \in \Sigma_{tan}$ for any $t\in [0,1] \setminus I$, and moreover, for almost every $t\in I$, $\dot{\gamma}(t) \in L_{\Delta} (\gamma(t))$. By smoothness of $L_{\Delta}$ on $\Sigma_{tr}$, the open set  $\Sigma_{tr}$ can be foliated by the orbits of  $L_{\Delta}$. Let us fix a Riemannian metric $g$ on  $M$ whose geodesic distance is denoted $d^g$. Through each point $z$ of $\Sigma_{tr}$ there is a maximal orbit $\mathcal{O}_z$, that is a one-dimensional smooth submanifold of $\Sigma_{tr}$ which is either compact and diffeomorphic to a circle, or open and parametrized by a smooth curve parametrized by arc-length (with respect to the metric $g$) of the form
$$
\begin{matrix}
\gamma : (-\alpha,\beta) \rightarrow \Sigma_{tr} \quad \mbox{with} \quad \gamma(0)=z, \, 
\end{matrix}
$$
where $\alpha, \beta$ belong to $[0,\infty]$.  Note that every open orbit admits two parametrizations as above. For every $z\in \Sigma_{tr}$ whose orbit $\mathcal{O}_z$ is open, we call half-orbit of $z$, denoted by $\omega_z$,  any of the two subsets of $\mathcal{O}_z$ given by $\gamma((-\alpha,0])$ or $\gamma ([0,\beta))$. This pair of sets does not depend on the parametrization $\gamma$ of $\mathcal{O}_z$. We observe that if some half-orbit $\omega_z$ of the form $\gamma((-\alpha,0])$ is part of a singular horizontal path between two points, then it must have finite length, or equivalently finite $\mathcal{H}^1$ measure, and in consequence $\alpha$ is finite and  $\gamma$ necessarily has a limit  as $t$ tends to $\alpha$. In this case, we call end of $\omega_z$, denoted by $\partial \omega_z$, the limit $\lim_{t\downarrow \alpha} \gamma(t)$ which by construction belongs to $\Sigma \setminus \Sigma_{tr} = \Sigma_{tan}$. The following result will allow to work in a neighborhood of a point of $  \Sigma_{tan}$. In the statement, $\mathcal{H}^1$ and $\mathcal{H}^2$ stand respectively for the $1$  and $2$-dimensional Hausdorff measures associated with $d^g$.

\begin{lemma}\label{LEMxbar1}
Assume that there is $x\in M$ such that  $\mathcal{X}^x$ has positive  $2$-dimensional Hausdorff measure. Then there is $\bar{x} \in \Sigma_{tan}$  such that for every neighborhood $\mathcal{V}$ of $\bar{x}$ in $M$, there are two closed sets $S_0, S_{\infty}$ in $\Sigma$ satisfying the following properties:
\begin{itemize}
\item[(i)] $S_0 \subset \Sigma_{tr} \cap \mathcal{V}$ and $\mathcal{H}^2(S_0)>0$,
\item[(ii)] $S_{\infty} \subset   \Sigma_{tan} \cap \mathcal{V}$,
\item[(iii)] for every $z\in S_0$, there is a half-orbit $\omega_z$ which is contained in $\mathcal{V}$ such that $\mathcal{H}^1(\omega_z)\leq 1$ and $\partial \omega_z \in S_{\infty}$.
\end{itemize}
\end{lemma}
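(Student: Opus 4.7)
The plan is to reduce to $X := \mathcal{X}^x \cap \Sigma_{tr}$, which by Lemma \ref{LEM1rect} has positive $\mathcal{H}^2$-measure (since $\Sigma_{tan}$ is countably $1$-rectifiable, hence $\mathcal{H}^2$-null), to extract for $\mathcal{H}^2$-a.e. $z \in X$ a half-orbit terminating in $\Sigma_{tan}$ of finite length, to push this data forward to $\Sigma_{tan}$, and then to localize around a point in the support of the pushforward measure. For each $z \in X$ I would Borel-measurably select a singular horizontal path $\gamma_z : [0,1] \to M$ joining $x$ to $z$. By Proposition \ref{PROPsingmartinet}, $\gamma_z$ takes values in $\Sigma$, and on each connected component of $\{t \in [0,1] : \gamma_z(t) \in \Sigma_{tr}\}$ the path follows an orbit of $L_{\Delta}$. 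Letting $t_z$ be the supremum of $\{t \in [0,1] : \gamma_z(t) \in \Sigma_{tan}\}$ (set to $0$ if empty), openness of $\Sigma_{tr}$ gives $t_z < 1$, and the component of $\{t:\gamma_z(t)\in\Sigma_{tr}\}$ containing $1$ is $(t_z, 1]$. If $t_z = 0$ then $\gamma_z$ lies on the single $L_{\Delta}$-orbit through $x$, contributing only an $\mathcal{H}^2$-null subset of $X$; otherwise $\tilde{\omega}_z := \gamma_z((t_z, 1])$ is a half-orbit of $z$ with end $p_z := \gamma_z(t_z) \in \Sigma_{tan}$ and finite length $L(z) := \mathcal{H}^1(\tilde{\omega}_z) \leq \ell(\gamma_z)$.

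Since $L$ is a.e. finite, writing $X_N := \{L \leq N\}$ some $X_{N_0}$ has positive $\mathcal{H}^2$-measure. I would push $\mathcal{H}^2|_{X_{N_0}}$ forward by $F(z) := p_z$ to a finite positive Borel measure $\nu$ on $\Sigma_{tan}$ and choose $\bar{x} \in \mbox{supp}(\nu)$, so that $Y_r := F^{-1}(B_r(\bar{x})) \cap X_{N_0}$ has positive $\mathcal{H}^2$-measure for every $r > 0$. Given the neighborhood $\mathcal{V}$, I pick $r > 0$ with $\overline{B_{2r}(\bar{x})} \subset \mathcal{V}$ and $r \leq 1/2$, set $\epsilon := r$, and define
\begin{eqnarray*}
S_0 \, := \, \Bigl\{ q \in X_{N_0} \, \vert \, L(q) \leq \epsilon, \, p_q \in B_r(\bar{x}) \Bigr\}, \qquad S_{\infty} \, := \, \overline{F(S_0)},
\end{eqnarray*}
replacing $S_0$ by a compact subset of positive measure to ensure closedness. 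Conditions (ii) and (iii) are then immediate with $\omega_q := \tilde{\omega}_q$: the length is $L(q) \leq \epsilon \leq 1$, the end is $p_q \in S_{\infty} \cap \mathcal{V}$, and each point of $\omega_q$ lies within $\epsilon$ of $p_q \in B_r(\bar{x})$, hence in $\overline{B_{2r}(\bar{x})} \subset \mathcal{V}$.

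The main obstacle is (i): establishing $\mathcal{H}^2(S_0) > 0$. The key will be that $Y_r$ is backward-invariant under the flow $\phi$ of a unit local section $Z$ of $L_{\Delta}$ on $\Sigma_{tr}$, since for $q \in Y_r$ and $s \in [0, L(q))$ one has $p_{\phi_{-s}(q)} = p_q$ and $L(\phi_{-s}(q)) = L(q) - s$, so $\phi_{-s}(q) \in Y_r$. I would then decompose $Y_r = \bigsqcup_{n \geq 0} Y_r \cap \{L \in (n\epsilon/2, (n+1)\epsilon/2]\}$; some stratum $Y_r^{n_0}$ has positive $\mathcal{H}^2$-measure. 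If $n_0 = 0$ then $Y_r^{n_0} \subset S_0$ and we are done. Otherwise, pick a compact $K_0 \subset Y_r^{n_0}$ with $\mathcal{H}^2(K_0) > 0$ and set $s_1 := n_0 \epsilon / 2$. Because $K_0$ is compact in the open set $\Sigma_{tr}$, it sits at positive $g$-distance from $\Sigma_{tan}$; moreover $s_1 < L(q)$ for every $q \in K_0$, so $\phi_{-s_1}$ is defined and smooth on a neighborhood of $K_0$, has bounded nonvanishing Jacobian, and satisfies $\phi_{-s_1}(K_0) \subset S_0$ (since $L(\phi_{-s_1}(q)) \in (0, \epsilon/2]$ while $F$ is preserved). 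A change of variables then yields $\mathcal{H}^2(S_0) \geq \mathcal{H}^2(\phi_{-s_1}(K_0)) > 0$. The technically most delicate points will be the Borel measurability of the selection $z \mapsto (\gamma_z, p_z, L(z))$ (handled via a standard measurable selection, or by selecting $\gamma_z$ as a minimum-$L^2$-norm singular control) and the Jacobian control on the backward flow needed for the change of variables.
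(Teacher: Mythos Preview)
Your overall strategy (pushforward of $\mathcal{H}^2|_{X_{N_0}}$ by the end map $F(z)=p_z$, then localizing at a point of $\mathrm{supp}(\nu)$) is a reasonable alternative to the paper's nested-covering argument with the set-valued projection $\mathcal{P}_{tan}$, but the crucial step establishing $\mathcal{H}^2(S_0)>0$ has a genuine gap. You claim that $Y_r$ is backward-invariant under the unit flow $\phi$, writing $p_{\phi_{-s}(q)}=p_q$ and $L(\phi_{-s}(q))=L(q)-s$. This does not follow from your definitions: you defined $p_z$ and $L(z)$ through an \emph{arbitrary} Borel selection $z\mapsto\gamma_z$ of singular paths, and for $q':=\phi_{-s}(q)$ the selected path $\gamma_{q'}$ need not be related to $\gamma_q$ at all; it may pick out the \emph{other} half-orbit of $q'$, giving $p_{q'}\neq p_q$. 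Consequently $q'$ need not lie in $Y_r$ (nor even in $X_{N_0}$), and the flow argument collapses. A secondary issue is that the direction ``$-s$'' of the flow depends on which half-orbit $\tilde{\omega}_q$ is, and this can vary with $q\in K_0$, so a single $\phi_{-s_1}$ cannot be applied uniformly.

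The fix is to decouple the localization from the selection: define $S_0$ existentially as the set of $q\in\Sigma_{tr}$ admitting \emph{some} half-orbit $\omega_q$ with $\mathcal{H}^1(\omega_q)\leq\epsilon$ and $\partial\omega_q\in\overline{B_r(\bar{x})}\cap\Sigma_{tan}$. Then for $q\in K_0\subset Y_r^{n_0}$, the point on $\tilde{\omega}_q$ at arc-length $s_1$ from $q$ lies in $S_0$ by the purely geometric fact that a tail of a half-orbit is a half-orbit with the same end; splitting $K_0$ by the orientation of $\tilde{\omega}_q$ and applying the flow in each piece gives positive measure. This is exactly how the paper avoids the issue: it works throughout with the set-valued map $\mathcal{P}_{tan}(z)$ rather than a single-valued selection, and in the final step replaces $S_l$ by points on the half-orbits close to $S_\infty$, using the same sub-half-orbit observation.
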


\begin{proof}[Proof of Lemma \ref{LEMxbar1}]
Let $x\in M$ be fixed such that $\mathcal{H}^2(\mathcal{X}^x)>0$. Since by Lemma \ref{LEM1rect} the set $\Sigma_{tan}$ has Hausdorff dimension at most one, the set  $\tilde{\mathcal{X}}^x:= \mathcal{X}^x\cap \Sigma_{tr}$ is a subset of the smooth surface $\Sigma$ with positive area (for the volume associated with the restriction of $g$ to $\Sigma_{tr}$ or equivalently for any Lebesgue measure on $\Sigma$). Let us denote by $\mathcal{L}(\tilde{\mathcal{X}}^x)$ the set of Lebesgue density points of $\tilde{\mathcal{X}}^x$ on $\Sigma$ and define the singular distance from $x$, $d_x : \Sigma \rightarrow [0,\infty]$ by
$$
d_x(z) :=   \inf  \Bigl\{ \mbox{length}^g(\gamma) \, \vert \,   \gamma:[0,1] \rightarrow \Sigma, \, \mbox{hor. }, \, \gamma(0)=x, \, \gamma(1)=z  \Bigr\}, 
$$
for every $z\in \Sigma$, where $d_x(z)=\infty$ if there is no singular horizontal path joining $x$ to $z$. By construction, $d_x$ is lower semicontinuous on $\Sigma$ and finite on $\tilde{\mathcal{X}}^x$. We claim that the set $\mathcal{I}$ of $z\in \tilde{\mathcal{X}}^x$ such that any half-orbit $\omega_z$ of $L_{\Delta}$ satisfies $\mathcal{H}^{1} (\omega_z)=\infty$ has measure zero in $\Sigma$. As a matter of fact, consider some  $z\in \tilde{\mathcal{X}}^x$ such that $\mathcal{H}^{1} (\omega_z)=\infty$ for any half-orbit $\omega_z$. Since $z$ belongs to $\tilde{\mathcal{X}}^x$, there is a  (singular) horizontal path $ \gamma:[0,1] \rightarrow \Sigma$ with $\gamma(0)=z$, $\gamma(1)=x$ and $d_x(z)= \mbox{length}^g(\gamma) $ (the existence of such a minimizing arc follows easily by compactness arguments, see \cite{riffordbook}). If $\gamma([0,1]) $ is not contained in $ \Sigma_{tr}$ then it must contains an half-orbit of $z$ which is assumed to have infinite length, which contradicts $ \mbox{length}^g(\gamma) =d_x(z)<\infty$. Therefore, $\gamma([0,1]) \subset  \Sigma_{tr}$, $x$ belongs to $ \Sigma_{tr}$, and $z$ thas to be on the orbit of $x$, which proves that  $\mathcal{I}$  has measure zero in $\Sigma$. In conclusion, the set $\mathcal{S}:= \mathcal{L}(\tilde{\mathcal{X}}^x) \setminus \mathcal{I}$  has positive measure  in $\Sigma$ and for every $z\in \mathcal{S}$ there is an half-orbit $\omega_z$ with finite length. So there is $K>0$ and a closed subset $S$ of $\mathcal{S}$ with positive measure such that for every $z\in S$ there is an half-orbit $\omega_z$ such that $\mathcal{H}^{1}(\omega_z) < K$. In other words, any $z\in S$ can be joined to some point of  $\Sigma \setminus \Sigma_{tr} = \Sigma_{tan}$ by a singular horizontal path of length $< K$. Define the projection onto  the set $ \Sigma_{tan}$ (here $\mathcal{P}\left( \Sigma_{tan} \right)$ denotes the set of subsets of $ \Sigma_{tan}$)
$$
\mathcal{P}_{tan} \, : \, \Sigma_{tr}  \rightarrow \mathcal{P}\left(  \Sigma_{tan} \right)
$$
 by
\begin{multline*}
\mathcal{P}_{tan} (z) := \Bigl\{x'\in   \Sigma_{tan} \, \vert \, \exists \mbox{ an horizontal path } \gamma:[0,1] \rightarrow \Sigma \mbox{ with } \mbox{length}^g(\gamma)\leq K\\
\qquad \mbox{ s.t. } \gamma([0,1)) \subset  \Sigma_{tr}, \, \gamma(0)=z \mbox{ and }  \gamma(1)=x'\Bigr\}\\
=  \Bigl\{x'\in    \Sigma_{tan} \, \vert \, \exists \mbox{ an half-orbit } \omega_z \subset \mathcal{V} \mbox{ with } \mathcal{H}^1(\omega_z) \leq K  \mbox{ and } \partial \omega_z =\{x'\}\Bigr\},
\end{multline*}
for every $z\in \Sigma_{tr}$. By classical compactness results in sub-Riemannian geometry, the domain of $\mathcal{P}_{tan} $ is closed in $\Sigma_{tr}$ and its graph is closed in $\Sigma_{tr}  \times \Sigma_{tan}$. We have just proved that for every $z\in S$, the set $\mathcal{P}_{tan} (z) $ is nonempty. Now if we consider a countable and locally finite covering $\{\mathcal{B}^1_i\}_i$ of the closed set  $\Sigma_{tan}$ by geodesic open balls of radius $1$, there is a closed subset $S_1$ of $S$ of positive measure such that $\mathcal{P}_{tan} (z) \cap \mathcal{B}^1_{i_1}\neq \emptyset$ for every $z\in S_1$. Considering a finite covering of $\mathcal{B}^1_{i_1}$ by open sets $\{\mathcal{B}^2_i\}_i$ of diameters less than $1/2$, we get a closed set $S_2$ such that for some $i_2$, $\mathcal{P}_{tan} (z) \cap \mathcal{B}^2_{i_2} \neq \emptyset$ for all $z \in S_2$. By a recursive argument, we construct a decreasing sequence of open sets $\{\mathcal{V}_l\}_{l\in \N}$ of (geodesic) diameter $\leq 1/2^l$  along with a sequence of closed sets $\{S_l\}_{l\in \N}$ of positive measure such that for every $l\in \N$, $\mathcal{P}_{tan} (z) \cap \mathcal{V}_l \neq \emptyset$ for any $z\in S_l$. Let us prove that $\bar{x}$ such that $\cap_{l\in \N} \overline{\mathcal{V}_l} =\{\bar{x}\}$ yields the result. 

Let $\mathcal{V}$ be a neighborhood of $\bar{x}$ in $M$. By construction, there is $l\in \N$ such that $\overline{\mathcal{V}_l} \subset \mathcal{V}$ and $\mathcal{P}_{tan} (z) \cap \mathcal{V}_l \neq \emptyset$ for any $z\in S_l$. Set 
$$
S_{\infty} := \Bigl\{  x'\in  \Sigma_{tan} \cap \overline{\mathcal{V}_l}  \, \vert \, \exists z \in S_l \mbox{ s.t. }  x' \in   \mathcal{P}_{tan} (z)     \Bigr\}.
$$
By construction, $S_{\infty}$ is closed, nonempty, and satisfies (ii). Moreover, for every $z\in S_l$ there is an half-orbit $\omega_z$ with $\mathcal{H}^1(\omega_z)\leq K$ and $\partial \omega_z \in S_{\infty}$. Furthermore, we note that if  $x'\in S_{\infty}$, $z\in S_l$ and $\omega_z$ is an half-orbit such that $\partial \omega_z=x'$, then for every $z'$ in $\omega_z$, $x'$ is an end of the half-orbit $\omega_{z'}$ of $z'$ which is contained in $\omega_z$. This means that we can replace $S_l$ by some set $S_0$ such that (i) and (iii) are satisfied. 
 \end{proof}

From now on, we assume that there is some $x\in M$ such that  $\mathcal{X}^x$ has positive  $2$-dimensional Hausdorff measure and we fix some $\bar{x}\in \Sigma_{tan}$ given by Lemma \ref{LEMxbar1}. Since $\Sigma$ is a smooth surface, there are a relatively compact open neighborhood $\mathcal{V}$ of $\bar{x}$ in $M$ and a set of coordinates $(x_1,x_2,x_3)$ in $\mathcal{V}$ such that $\bar{x}=0$ and $\Sigma \cap \mathcal{V} = \left\{ x_3=0\right\}$, and there are  two smooth vector fields $X^1, X^2$ on $\mathcal{V}$ of the form 
\begin{eqnarray}\label{X1X2}
X^1 = \partial_{x_1} + A^1 \,  \partial_{x_3} \quad \mbox{and} \quad X^2 = \partial_{x_2} + A^2 \, \partial_{x_3}
\end{eqnarray}
with $A^1(0)=A^2(0)=0$ such that $\Delta(x) = \mbox{Span} \left\{X^1(x), X^2(x)\right\}$ for every $x\in \mathcal{V}$. In addition, without loss of generality we may assume that the above property holds on a neighborhood of $\overline{\mathcal{V}}$. Define the smooth vector field $\mathcal{Z} = \sum_{i=1}^3 \mathcal{Z}_i \, \partial_{x_i}$ on $\mathcal{V}$ by
\begin{eqnarray}\label{EQZ}
\mathcal{Z} :=   \left( X^1 \cdot x_3  \right) \, X^2  - \left( X^2 \cdot x_3  \right) \, X^1        = A^1 \, X^2 - A^2 \, X^1.
\end{eqnarray}
By construction, $\mathcal{Z}$ is a section of $\Delta$ and since $\mathcal{Z} \cdot x_3 = \mathcal{Z}_3= 0$ it is tangent to $\Sigma$. Moreover since $X^1, X^2$ are linearly independent in $\mathcal{V}$, the vector $\mathcal{Z}(x)$ vanishes for some $x\in \Sigma \cap \mathcal{V}$ if and only if $X^1\cdot x_3 = X^2 \cdot x_3=0$, that is if $\Delta$ is tangent to $\Sigma$ at $x$ or in other words if $x$ belongs to $\Sigma_{tan}$. In conclusion, the restriction $\mathcal{Z}$ of $\mathcal{Z}$ to $\Sigma$ is tangent to $\Sigma$ and it generates $L_{\Delta}$ which means that $L_{\Delta}(x) = \mbox{Span} (\mathcal{Z}(x))$ for any $x\in \Sigma \cap \mathcal{V}$. The following result is the key result in the proof of Theorem \ref{THMdim3}. The notation $\mbox{div}^{\Sigma}$ stands for the divergence on $\Sigma \cap \mathcal{V} =\{x_3=0\}$ with respect to the euclidean metric, that is, $\mbox{div}^{\Sigma} \mathcal{Z} = \left(\mathcal{Z}_1\right)_{x_1} + \left( \mathcal{Z}_2\right)_{x_2} $. We observe that since $\mathcal{Z}_3=0$ we have $\mbox{div}^{\Sigma} \mathcal{Z} = \mbox{div} \mathcal{Z}$ if $\mbox{div}$ denotes the divergence with respect to the Euclidean metric in $\mathcal{V}$ with coordinates $(x_1,x_2,x_3)$. 

\begin{lemma}
\label{LEMlocal1}
There is $K>0$ such that
\begin{eqnarray}\label{divK}
\left| \mbox{div}_x^{\Sigma} \mathcal{Z} \right| \leq K \, |\mathcal{Z}(x)| \qquad \forall x\in \Sigma \cap \mathcal{V}.
\end{eqnarray}
\end{lemma}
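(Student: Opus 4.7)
The plan is to compute everything explicitly in the coordinates $(x_1,x_2,x_3)$ provided above and exploit the fact that $\Sigma \cap \mathcal{V} = \{x_3 = 0\}$ forces an algebraic identity on the coefficients $A^1, A^2$.

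First I would expand the definition of $\mathcal{Z}$ in (\ref{EQZ}) using (\ref{X1X2}). A direct calculation gives
\begin{eqnarray*}
\mathcal{Z} = A^1 (\partial_{x_2} + A^2 \partial_{x_3}) - A^2 (\partial_{x_1} + A^1 \partial_{x_3}) = -A^2 \, \partial_{x_1} + A^1 \, \partial_{x_2},
\end{eqnarray*}
so the components are $\mathcal{Z}_1 = -A^2$, $\mathcal{Z}_2 = A^1$ and $\mathcal{Z}_3 = 0$, which matches the statement that $\mathcal{Z}$ is tangent to $\Sigma$. Consequently, $|\mathcal{Z}(x)|^2 = (A^1(x))^2 + (A^2(x))^2$ and
\begin{eqnarray*}
\mbox{div}^{\Sigma} \mathcal{Z} = - A^2_{x_1} + A^1_{x_2}
\end{eqnarray*}
at every point of $\Sigma \cap \mathcal{V}$.

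Next I would use the defining property of $\Sigma$ to rewrite this divergence. From the computation of $[X^1,X^2]$ already given in the proof of Lemma \ref{LEM1rect}, one has
\begin{eqnarray*}
[X^1,X^2] = \bigl( A^2_{x_1} - A^1_{x_2} + A^1 A^2_{x_3} - A^2 A^1_{x_3} \bigr) \, \partial_{x_3},
\end{eqnarray*}
and since $\partial_{x_3} \notin \mbox{Span}\{X^1,X^2\}$ at every point of $\mathcal{V}$, the Martinet condition $\Delta + [\Delta,\Delta] \neq T_x M$ on $\Sigma$ is equivalent to the vanishing of the scalar coefficient above. In other words, on $\Sigma \cap \mathcal{V} = \{x_3 = 0\}$,
\begin{eqnarray*}
A^2_{x_1} - A^1_{x_2} + A^1 A^2_{x_3} - A^2 A^1_{x_3} = 0,
\end{eqnarray*}
which plugged into the formula for $\mbox{div}^{\Sigma} \mathcal{Z}$ yields the key identity
\begin{eqnarray*}
\mbox{div}^{\Sigma}_x \mathcal{Z} = A^1(x) \, A^2_{x_3}(x) - A^2(x) \, A^1_{x_3}(x) \qquad \forall x \in \Sigma \cap \mathcal{V}.
\end{eqnarray*}

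Finally, by Cauchy–Schwarz applied to the two-vectors $(A^1, -A^2)$ and $(A^2_{x_3}, A^1_{x_3})$,
\begin{eqnarray*}
\bigl| \mbox{div}^{\Sigma}_x \mathcal{Z} \bigr| \leq \sqrt{(A^1(x))^2 + (A^2(x))^2} \, \sqrt{(A^1_{x_3}(x))^2 + (A^2_{x_3}(x))^2} = |\mathcal{Z}(x)| \, \sqrt{(A^1_{x_3}(x))^2 + (A^2_{x_3}(x))^2}.
\end{eqnarray*}
Because the vector fields $X^1, X^2$ are smooth on a neighborhood of the compact closure $\overline{\mathcal{V}}$, the quantity $\sqrt{(A^1_{x_3})^2 + (A^2_{x_3})^2}$ is bounded by some constant $K > 0$ on $\mathcal{V}$, which gives (\ref{divK}). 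No real obstacle is expected here: the entire content of the lemma is the algebraic cancellation coming from the Martinet equation, the rest being a routine Lipschitz estimate.
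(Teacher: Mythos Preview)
Your proof is correct and follows essentially the same route as the paper: compute $\mathcal{Z}$ and its surface divergence in the explicit coordinates, use the vanishing of the $[X^1,X^2]$ coefficient on $\Sigma$ to rewrite $A^1_{x_2}-A^2_{x_1}$ as $A^1 A^2_{x_3}-A^2 A^1_{x_3}$, and conclude by a pointwise bound on $A^i_{x_3}$ via compactness of $\overline{\mathcal{V}}$. The only cosmetic difference is that you invoke Cauchy--Schwarz where the paper uses the cruder $\sqrt{2}\max\{|A^1_{x_3}|,|A^2_{x_3}|\}$ estimate.
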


\begin{proof}[Proof of Lemma \ref{LEMlocal1}]
Firstly, we can notice that by construction, we have 
$$
[X^1,X^2] = \left( A^2_{x_1} + A^2_{x_3} \, A_1 -   A^1_{x_2} - A^1_{x_3} \, A_2 \right) \, \partial_{x_3} = 0 \quad \mbox{on} \quad \Sigma \cap \mathcal{V}.
$$
Secondly, by (\ref{X1X2})-(\ref{EQZ}) we can compute $\mathcal{Z}$ and its divergence. On $ \Sigma \cap \mathcal{V}$ we have 
$$
\mathcal{Z} = A^1 \, \partial_{x_2} - A^2\, \partial_{x_1} \quad \mbox{and} \quad \mbox{div}_x^{\Sigma} \mathcal{Z} = \mbox{div}_x \mathcal{Z} = A^1_{x_2}- A^2_{x_1} =  A^2_{x_3} \, A_1 - A^1_{x_3} \, A_2,
$$
so that
$$
\left| \mbox{div}_x^{\Sigma} \mathcal{Z} \right| \leq \sqrt{2} \, \max \left\{ \left| A^1_{x_3} \right|,  \left| A^2_{x_3} \right|  \right\} \, |\mathcal{Z}(x)| \qquad \forall x \in \Sigma \cap \mathcal{V}.
$$ 
We conclude easily by compactness of $\overline{\mathcal{V}}$.
\end{proof}

We are now ready to conclude the proof of Theorem \ref{THMdim3}. By Lemma \ref{LEMxbar1}, there are two closed sets $S_0, S_{\infty} \subset \Sigma \cap \mathcal{V}$ satisfiying the properties (i)-(iii). Denote by $\varphi_t$ the flow of $\mathcal{Z}$. For every $z\in S_0$, there is $\epsilon \in \{-1,1\}$ such that $\omega_z = \{ \varphi_{\epsilon t} (z) \, \vert \, t \geq 0\}$. Then, there there is $\epsilon \in \{-1,1\}$ and $S_0^{\epsilon} \subset S_0$ of positive measure such that for every $z\in S_0^{\epsilon}$ there holds
\begin{eqnarray}\label{22july1}
\omega_z =\Bigl\{ \varphi_{\epsilon t} (z) \, \vert \, t \geq 0\Bigr\} \subset \Sigma \cap \mathcal{V},  \quad \mathcal{H}^1(\omega_z) \leq 1, \quad \mbox{and} \quad \lim_{t\rightarrow +\infty} d\left( \varphi_{\epsilon t} (z), S_{\infty}\right)=0. 
\end{eqnarray}
Set for every $t\geq 0$,
$$
S_t := \varphi_{\epsilon t} \left( S_0^{\epsilon} \right).
$$
Denote by $\mbox{vol}^{\Sigma}$ the volume associated with the Euclidean metric on $\Sigma$. Since $S_{\infty}$ has volume zero (by (iii) and Lemma \ref{LEM1rect}), by the dominated convergence  Theorem, the last property in (\ref{22july1}) yields
\begin{eqnarray}\label{volSt}
\lim_{t\rightarrow +\infty} \mbox{vol}^{\Sigma} \left( S_t\right)=0.
\end{eqnarray}
Moreover, there is $C>0$ such that   for every $z\in S_0^{\epsilon}$ and every $t\geq 0$, we have ($\mathcal{H}^1$ is the Hausdorff measure with respect to $d^g$ while $|\cdot|$ denotes the Euclidean norm)
$$
\int_{0}^t \left| \mathcal{Z}  \left(\varphi_{\epsilon s}(z) \right) \right|   \, ds \leq C \mathcal{H}^1 \left( \omega_z\right)  \leq C.
$$
Therefore by Proposition \ref{PROPintdiv} and (\ref{divK}), we have for every $t\geq 0$
\begin{eqnarray*}
\mbox{vol}^{\Sigma} (S_t) = \mbox{vol}^{\Sigma} \left( \varphi_{\epsilon t} (S_0^{\epsilon})\right) & = & \int_{S_0^{\epsilon}} \exp\left( \int_{0}^t  \mbox{div}^{\Sigma}_{\varphi_{\epsilon s}(z)}(\epsilon \, \mathcal{Z})  \, ds\right) \,  d\mbox{vol}^{\Sigma}(z)\\
&\geq & \int_{S_0^{\epsilon}} \exp \left( -K \int_{0}^t  \left|\mathcal{Z}\left( \varphi_{\epsilon s}(z) \right)\right| \, ds\right) \,  d\mbox{vol}^{\Sigma}(z)\\
& \geq & e^{-K C} \, \mbox{vol}^{\Sigma}(S_0),
\end{eqnarray*}
which contradicts (\ref{volSt}). The proof of Theorem \ref{THMdim3} is complete.

\section{Proof of Theorem \ref{THMdim3sing}}\label{proofTHMdim3sing}

Throughout this section we assume that $\Delta$ is an analytic rank-two totally nonholonomic distribution on the analytic manifold $M$ of dimension three. The proof of Theorem \ref{THMdim3sing} is divided in two parts. The first part in Section \ref{SECfirstpart} is devoted to an extension of Lemma \ref{LEMxbar1} to the singular analytic case and the second part in Section \ref{SECsecondpart} contains the result from resolution of singularities, Proposition \ref{PROPMAIN}, which is required to end up the proof as in Section \ref{proofTMHdim3}. The proof of Proposition \ref{PROPMAIN}, which is rather involved,  is postponed to the next sections. To make the result more comprehensible, we study first in Section \ref{SECconical} the case of of a conical singularity.  As before we equip $M$ with a Riemannian metric $g$ whose geodesic distance is denoted by $d^g$ and we denote by $\mathcal{H}^1$ and $\mathcal{H}^2$ respectively the $1$  and $2$-dimensional Hausdorff measures associated with $d^g$. Moreover, we denote by $\mbox{vol}^g$ and  $\mbox{div}^g$ the volume form and divergence operator associated with $g$.

\subsection{First part of the proof}\label{SECfirstpart}

Recall that  by analyticity of $\Delta$ and $M$, the Martinet surface  defined by
$$
\Sigma := \Bigl\{ x\in M \, \vert \,  \Delta(x) + [\Delta,\Delta](x) \neq T_xM \Bigr\}
$$
is an analytic set in $M$ which enjoyes the structure of a coherent analytic space denoted by $\Sigma_{\Delta}=(\Sigma,\mathcal{O}_{\Sigma})$ (see Appendix \ref{secMartinetspace}). Following  \cite{bm88,lojasiewicz65}, we say that a point $x \in \Sigma$ is regular, if there exists an open neighborhood $U$ of $x$ such that $\Sigma \cap U$ is a smooth submanifold of $U$. A point which is not regular is called a singularity of the set $\Sigma$, the set of singularities of $\Sigma$ is denoted by $\mbox{Sing}(\Sigma)$. It is worth noticing that the singular set $\mbox{Sing}(\Sigma_{\Delta}) $, that is the singular set  of the analytic space $\Sigma_{\Delta}$, which appears in the statement of Theorem \ref{THMdim3sing} contains (but might be bigger than) the set $\mbox{Sing}(\Sigma)$. The set $\Sigma$ admits a stratification by strata  $\Sigma^0, \Sigma^1, \Sigma^2$ respectively of dimension zero, one, and two, with $\mbox{Sing}(\Sigma) = \Sigma^0\cup \Sigma^1$ and $\Sigma^2= \Sigma\setminus \mbox{Sing} (\Sigma)$, where $\Sigma^0$ is a locally finite union of points, $\Sigma^1$ is a locally finite union of analytic submanifolds of $M$ of dimension, and  $\Sigma^2$ is an analytic submanifold of $M$ of dimension two. Let us define two subsets  $\Sigma^{2}_{tr}, \Sigma^{2}_{tan}$ of $\Sigma^2$ by
$$
\Sigma_{tr}^2 := \Bigl\{ x\in \Sigma^2 \, \vert \,  \Delta(x) \cap T_x\Sigma^2 \mbox{ has dimension one}\Bigr\} \quad \mbox{and} \quad \Sigma_{tan}^2 := \Bigl\{ x\in \Sigma^2 \, \vert \,  \Delta(x) = T_x\Sigma^2 \Bigr\}.
$$
By construction, $\Sigma^{2}_{tr}$ is an open subset of $\Sigma^2$ and $\Sigma^{2}_{tan}$ is an analytic subset of $\Sigma^2$ of dimension at most one. Then, define two subsets  $\Sigma^{1}_{tr}, \Sigma^{1}_{tan}$ of $\Sigma^1$ by
$$
\Sigma^{1}_{tr} := \Bigl\{ x\in \Sigma^1 \, \vert \,  \Delta(x) \cap T_x\Sigma^1 =\{0\}\Bigr\} \quad \mbox{and} \quad \Sigma^{1}_{tan} := \Bigl\{ x\in \Sigma^1 \, \vert \,  T_x\Sigma^1 \subset \Delta(x) \Bigr\}.
$$
By the characterization of singular horizontal paths given in Proposition \ref{PROPsingmartinet}, we check easily that a non-constant horizontal path $\gamma : [0,1] \rightarrow M$ is singular if and only if it is contained in $\Sigma$ and horizontal with respect to the singular distribution $L_{\Delta}$ on $\Sigma$ defined by
$$
 L_{\Delta} (x) = \left\{ 
 \begin{array}{cl}
 \{0\} & \mbox{ if } x \in \Sigma^0 \cup \Sigma^{1}_{tr}\\
 T_x\Sigma^1 & \mbox{ if } x \in \Sigma^{1}_{tan}\\
 \Delta(x) \cap T_x\Sigma^{2} & \mbox{ if } x \in \Sigma^{2}_{tr}\\
 T_x\Sigma^{2} & \mbox{ if } x \in \Sigma^{2}_{tan},
 \end{array}
 \right.
 $$
 that is if it  satisfies
 \begin{eqnarray*}
\dot{\gamma}(t) \in L_{\Delta} (\gamma(t)) \qquad  \mbox{for a.e. } t \in [0,1].
\end{eqnarray*}
Since $L_{\Delta}$ is an analytic one-dimensional distribution on $\Sigma^2_{tr}$, the analytic surface $\Sigma^2_{tr}$ can be foliated by the orbits of  $L_{\Delta}$. By repeating the construction we did before Lemma  \ref{LEMxbar1}, we can associate to each point $z$ of $\Sigma^2_{tr}$, whose orbit is open, two half-orbits in $\Sigma^2_{tr}$ whose ends  in $\Sigma \setminus \Sigma^2_{tr}=\mbox{Sing} (\Sigma) \cup  \Sigma^{2}_{tan}$, denoted by $\partial \omega_z$ (where $\omega_z$ denotes the half-orbit), are well-defined if they have finite length. The proof of the following result follows the same lines as that of Lemma \ref{LEMxbar1} with $ \mbox{Sing}(\Sigma) \cup   \Sigma^{2}_{tan}$ instead of $\Sigma_{tan}$.

\begin{lemma}\label{LEMxbar2}
Assume that there is $x\in M$ such that  $\mathcal{X}^x$ has positive  $2$-dimensional Hausdorff measure. Then there is $\bar{x} \in \mbox{Sing}(\Sigma)\cup  \Sigma^{2}_{tan}$  such that for every neighborhood $\mathcal{V}$ of $\bar{x}$ in $M$, there are two closed sets $S_0, S_{\infty}$ in $\Sigma$ satisfying the following properties:
\begin{itemize}
\item[(i)] $S_0 \subset \Sigma_{tr}^2 \cap \mathcal{V}$ and $\mathcal{H}^2(S_0)>0$,
\item[(ii)] $S_{\infty} \subset \left( \mbox{Sing}(\Sigma) \cup   \Sigma^{2}_{tan}\right)  \cap \mathcal{V}$,
\item[(iii)] for every $z\in S_0$, there is an half-orbit $\omega_z$ which is contained in $\mathcal{V}$ such that $\mathcal{H}^1(\omega_z)\leq 1$ and $\partial \omega_z \in S_{\infty}$.
\end{itemize}
\end{lemma}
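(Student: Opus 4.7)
The plan is to repeat the proof of Lemma \ref{LEMxbar1} verbatim with the bad set $\Sigma_{tan}$ replaced by $\mbox{Sing}(\Sigma) \cup \Sigma^{2}_{tan}$. The first thing to check is that this new bad set still has Hausdorff dimension at most one, so that it is $\mathcal{H}^{2}$-negligible on $\Sigma$. By the stratification, $\mbox{Sing}(\Sigma) = \Sigma^{0} \cup \Sigma^{1}$ has dimension at most one by definition. For $\Sigma^{2}_{tan}$, in any local chart where $\Sigma^{2}$ is given by an analytic equation $\{h=0\}$ and $\Delta$ is generated by two analytic vector fields $X^{1},X^{2}$, the set $\Sigma^{2}_{tan}$ is cut out inside $\Sigma^{2}$ by the pair of analytic equations $X^{1}\cdot h = X^{2}\cdot h = 0$; these cannot both vanish identically on any open piece of $\Sigma^{2}$ because total nonholonomicity prohibits $\Delta$ from being tangent to a two-dimensional analytic set. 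Hence $\Sigma^{2}_{tan}$ is a proper analytic subset of $\Sigma^{2}$ and has dimension at most one.

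Assuming $\mathcal{H}^{2}(\mathcal{X}^{x}) > 0$ for some $x \in M$, the set $\tilde{\mathcal{X}}^{x} := \mathcal{X}^{x} \cap \Sigma^{2}_{tr}$ therefore still has positive two-dimensional Hausdorff measure, and it sits inside the analytic surface $\Sigma^{2}$ equipped with the analytic line field $L_{\Delta}$, whose orbits and half-orbits are defined exactly as in Section \ref{proofTMHdim3}. I would then introduce the singular distance $d_{x} : \Sigma \to [0,\infty]$ as the infimum of $g$-lengths of singular horizontal paths valued in $\Sigma$ that join $x$ to $z$, and reproduce the argument showing that the set $\mathcal{I} \subset \tilde{\mathcal{X}}^{x}$ of points whose two half-orbits both have infinite $\mathcal{H}^{1}$-measure is $\mathcal{H}^{2}$-negligible. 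For $z \in \mathcal{I}$, a minimizing singular horizontal path $\gamma$ from $x$ to $z$ exists by the usual $L^{2}$-compactness of horizontal paths in the closed analytic set $\Sigma$; if $\gamma$ ever meets $\mbox{Sing}(\Sigma) \cup \Sigma^{2}_{tan}$ then the remaining portion re-enters $\Sigma^{2}_{tr}$ through a half-orbit of $z$ which must have finite length, contradicting $z \in \mathcal{I}$. Hence $\gamma \subset \Sigma^{2}_{tr}$ and $z$ lies on the single $L_{\Delta}$-orbit through $x$, a set of $\mathcal{H}^{2}$-measure zero.

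From here the proof proceeds identically to the one of Lemma \ref{LEMxbar1}. I would extract a closed subset $S$ of $\mathcal{L}(\tilde{\mathcal{X}}^{x}) \setminus \mathcal{I}$ of positive $\mathcal{H}^{2}$-measure together with a uniform constant $K > 0$ such that every $z \in S$ admits a half-orbit $\omega_{z}$ with $\mathcal{H}^{1}(\omega_{z}) < K$ and $\partial\omega_{z} \in \mbox{Sing}(\Sigma) \cup \Sigma^{2}_{tan}$. Then, introducing the multi-valued projection $\mathcal{P}_{tan}$ from $\Sigma^{2}_{tr}$ to $\mbox{Sing}(\Sigma) \cup \Sigma^{2}_{tan}$ defined exactly as in the smooth case and exploiting the closedness of its graph (a standard compactness consequence for singular horizontal paths of bounded length), a recursive covering by nested geodesic balls of radii $1/2^{\ell}$ produces a limit point $\bar{x} \in \mbox{Sing}(\Sigma) \cup \Sigma^{2}_{tan}$ together with closed subsets of positive measure whose images under $\mathcal{P}_{tan}$ lie in arbitrarily small neighborhoods of $\bar{x}$. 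A final saturation step, in which each base point is replaced by any interior point of its associated half-orbit lying in a fixed small neighborhood $\mathcal{V}$ of $\bar{x}$, furnishes the desired sets $S_{0}$ and $S_{\infty}$ with properties (i)--(iii). The only genuine novelty compared with the smooth proof is the verification that $\Sigma^{2}_{tan}$ is thin, which is where analyticity replaces the bracket-chain argument of Lemma \ref{LEM1rect}; everything else transposes line by line.
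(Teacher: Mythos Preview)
Your proposal is correct and matches the paper's approach exactly: the paper itself gives no independent proof of Lemma \ref{LEMxbar2}, stating only that it ``follows the same lines as that of Lemma \ref{LEMxbar1} with $\mbox{Sing}(\Sigma) \cup \Sigma^{2}_{tan}$ instead of $\Sigma_{tan}$.'' Your additional justification that $\Sigma^{2}_{tan}$ is a proper analytic subset of $\Sigma^{2}$ (via the equations $X^{1}\cdot h = X^{2}\cdot h = 0$ and total nonholonomicity) is the analytic replacement for Lemma \ref{LEM1rect} that the paper records just before the statement of Lemma \ref{LEMxbar2}, and the remainder of your argument is a faithful transcription of the proof of Lemma \ref{LEMxbar1}.
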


From now on, we assume that there is some $x\in M$ such that  $\mathcal{X}^x$ has positive  $2$-dimensional Hausdorff measure and we fix some $\bar{x}\in  \mbox{Sing}(\Sigma)\cup  \Sigma^{2}_{tan}$ given by Lemma \ref{LEMxbar2}. If $\bar{x}$ belongs to $\Sigma^2_{tan}$ we can conclude the proof exactly as for Theorem  \ref{THMdim3}, so we can assume  from now that $\bar{x}$ belongs to $\mbox{Sing}(\Sigma)$. Since $\Sigma$ has the structure of an analytic space, there is an open neighborbood $\mathcal{V}$ of $\bar{x}$ and an analytic function $h:\mathcal{V} \rightarrow \R$ such that
$$
\Sigma \cap \mathcal{V} =\Bigl\{ x\in \mathcal{V} \, \vert \, h(x)=0\Bigr\}
\quad
\mbox{and} \quad \mbox{Sing}(\Sigma_{\Delta})\cap\mathcal{V} =  \Bigl\{x\in \mathcal{V} \, \vert \, h(x)=0, \, d_xh=0\Bigr\}.
$$
Without loss of generality, we may also assume that there are two analytic vector fields $X,Y$ on $\mathcal{V}$ such that 
$$
\Delta (x) = \mbox{Span} \Bigl\{ X(x), Y(x) \Bigr\} \qquad \forall x \in \mathcal{V}.
$$
Define the analytic vector field $\mathcal{Z}$ on $\mathcal{V}$ by
\begin{eqnarray}\label{EQZ2}
\mathcal{Z} :=  \left(X\cdot h\right) \, Y -   \left( Y\cdot h\right) \, X,
\end{eqnarray}
where $X \cdot h, Y\cdot h$ denote respectively the Lie derivatives along $X$ and $Y$. By construction, the restriction $\mathcal{Z}$ of $\mathcal{Z}$ to $\Sigma$ is tangent to $\Sigma$, in particular  it is vanishing on $\mbox{Sing}(\Sigma_{\Delta})\cup\Sigma^2_{tan}$ and it generates $L_{\Delta}$ on $\Sigma^2_{tr} \setminus \mbox{Sing}(\Sigma_{\Delta})$ which means that $L_{\Delta}(x) = \mbox{Span} (\mathcal{Z}(x))$ for any $x\in ( \Sigma^2_{tr} \setminus \mbox{Sing}(\Sigma_{\Delta})) \cap \mathcal{V}$. Moreover the following property holds.

\begin{lemma}
\label{LEMlocalsing}
There is $K>0$ such that
\begin{eqnarray}\label{divKsing}
\left| \mbox{div}_x^{g} \mathcal{Z} \right| \leq K \, |\mathcal{Z}(x)| \qquad \forall x\in \Sigma \cap \mathcal{V}.
\end{eqnarray}
\end{lemma}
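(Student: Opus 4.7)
The plan is to carry out intrinsically, without the adapted coordinates that were available in the smooth case, the same algebraic manipulation that drove the proof of Lemma \ref{LEMlocal1}, and then to exploit the fact that the defining ideal of $\Sigma_\Delta$ is reduced and principal in order to divide a function that vanishes on $\Sigma$ by $h$. Set $\alpha := X\cdot h$ and $\beta := Y\cdot h$, so that $\mathcal{Z} = \alpha Y - \beta X$, and expand by Leibniz:
$$
\mbox{div}^g \mathcal{Z} \;=\; \alpha\,\mbox{div}^g Y - \beta\,\mbox{div}^g X + Y(\alpha) - X(\beta) \;=\; \alpha\,\mbox{div}^g Y - \beta\,\mbox{div}^g X - [X,Y]\cdot h,
$$
using $Y(X\cdot h) - X(Y\cdot h) = -[X,Y]\cdot h$. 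This identity holds on all of $\mathcal{V}$.

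The next step is to encode the Martinet condition. After shrinking $\mathcal{V}$ if necessary, I would pick an analytic vector field $V$ on $\mathcal{V}$ so that $(X,Y,V)$ is an analytic frame and decompose $[X,Y] = \lambda X + \mu Y + \rho V$ with $\lambda,\mu,\rho$ analytic on $\mathcal{V}$. By the definition of $\Sigma$ the vectors $X, Y, [X,Y]$ are linearly dependent at every point of $\Sigma$, hence $\rho$ vanishes identically on $\Sigma \cap \mathcal{V} = \{h=0\}$. Because the ideal sheaf of $\Sigma_\Delta$ is principal and reduced with local generator $h$ (Appendix \ref{secMartinetspace}), the analytic Nullstellensatz furnishes an analytic function $r$ on $\mathcal{V}$ (again after possibly shrinking) with $\rho = rh$. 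Applying this decomposition to $h$ gives $[X,Y]\cdot h = \lambda\alpha + \mu\beta + rh\,(V\cdot h)$, and inserting it into the divergence identity yields, on $\Sigma \cap \mathcal{V}$,
$$
\mbox{div}^g \mathcal{Z} \;=\; \bigl(\mbox{div}^g Y - \lambda\bigr)\,\alpha \;-\; \bigl(\mbox{div}^g X + \mu\bigr)\,\beta.
$$

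To conclude, I would invoke that $X$ and $Y$ are analytic and pointwise linearly independent on $\overline{\mathcal{V}}$, so by compactness there exists $c>0$ with $|aX(x)+bY(x)|_g \ge c(|a|+|b|)$ for all $(a,b)\in\R^2$ and all $x\in\overline{\mathcal{V}}$. Applied to $(a,b) = (-\beta,\alpha)$ this gives $|\mathcal{Z}(x)| \ge c(|\alpha(x)|+|\beta(x)|)$, and combined with the uniform bounds on $|\mbox{div}^g Y - \lambda|$ and $|\mbox{div}^g X + \mu|$ on $\overline{\mathcal{V}}$ this produces the required $K$. The only non-routine step is the division $\rho = rh$: it relies on $(h)$ being a radical ideal in each stalk and on $h$ generating the ideal of $\Sigma$ throughout $\mathcal{V}$, both of which are guaranteed by the coherent principal reduced structure of $\Sigma_\Delta$ established in Appendix \ref{secMartinetspace}. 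Everything else is direct computation.
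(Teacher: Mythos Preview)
Your argument is correct and reaches the same conclusion as the paper, but by a somewhat different route. The paper chooses adapted coordinates in which $X=\partial_{x_1}$ and $Y=\partial_{x_2}+A\,\partial_{x_3}$, so that $[X,Y]=A_{x_1}\partial_{x_3}$ and the Martinet condition reads $A_{x_1}=0$ on $\Sigma$; a direct computation then gives $\mbox{div}^0\mathcal{Z}=h_{x_1}A_{x_3}-A_{x_1}h_{x_3}=\mathcal{Z}_2\,A_{x_3}-A_{x_1}h_{x_3}$, and the second term drops on $\Sigma$. Your coordinate-free version replaces this with the Leibniz expansion and the frame decomposition $[X,Y]=\lambda X+\mu Y+\rho V$, arriving at the same structure: on $\Sigma$ the divergence is a bounded linear combination of $\alpha=X\cdot h$ and $\beta=Y\cdot h$.

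One remark: the division step $\rho=rh$ is unnecessary, and its justification via a real-analytic Nullstellensatz is delicate (vanishing on the real zero set of a radical principal ideal does not in general force membership in that ideal; for instance $x$ vanishes on $\{x^2+y^2=0\}\subset\R^2$ but $x\notin(x^2+y^2)$). You only use that the term $\rho\,(V\cdot h)$ vanishes on $\Sigma$, and for that the pointwise Martinet condition $\rho|_{\Sigma}=0$ suffices directly---no division is required. (If one did want the divisibility, it holds because $\rho$ is, up to a unit, a generator of the non-reduced Martinet ideal $\mathcal{J}_\Delta$ of Appendix~\ref{secMartinetspace}, hence lies in $\mathcal{J}_\Delta\subset\sqrt{\mathcal{J}_\Delta}=(h)$.) With that simplification your proof is clean and arguably more transparent than the coordinate computation, at the modest cost of invoking a transverse frame field $V$.
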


\begin{proof}[Proof of Lemma \ref{LEMlocalsing}]
 Taking $\mathcal{V}$ smaller and doing a change of coordinates, we may assume that in coordinates $(x_1,x_2,x_3)$ in $\mathcal{V}$ we have
$$
X = \partial_{x_1} \quad \mbox{and} \quad Y = \partial_{x_2} + A \, \partial_{x_3},
$$
where $A$ is smooth in $\mathcal{V}$. So we have $[X,Y] = A_{1} \, \partial_{x_3}$ with $A_{1}:=\frac{\partial A}{\partial x_1}$, so that $A_1(x)=0$ for every $x\in \Sigma$. Setting $h_i=\frac{\partial h}{\partial x_i}$ for $i=1, 2, 3$, we have
$$
\mathcal{Z} = h_1(\partial_{x_2} + A \, \partial_{x_3}) - (h_2+A \, h_3)\partial_{x_1} =: \mathcal{Z}_1 \partial_{x_1} + \mathcal{Z}_2 \partial_{x_2}  + \mathcal{Z}_3 \partial_{x_3}, 
$$
which yields (here we set $A_{3}:=\frac{\partial A}{\partial x_3}$ and $h_{ij} = \frac{\partial^2 h}{\partial x_i\partial x_j}$ for $i,j=1, 2, 3$)
$$
\mbox{div}^0 \, \mathcal{Z} =  h_{12} + h_{13}\, A + h_1 \, A_3 - h_{21} - A_1\, h_3 -A \, h_{31}=    h_1 \, A_3  - A_1\, h_3=  \mathcal{Z}_2 \, A_3 -  A_1 \, h_3,
$$
where $\mbox{div}^0$ denotes the divergence with respect to the Euclidean metric in coordinates $(x_1,x_2,\allowbreak x_3)$. Since $A_1=0$ on $\Sigma \cap \mathcal{V}$ and $g$ and the Euclidean metric are equivalent, we conclude easily. 
\end{proof}

By Lemma \ref{LEMxbar2}, there are two closed sets $S_0, S_{\infty} \subset \Sigma \cap \mathcal{V}$ satisfying the properties (i)-(iii). Denote by $\varphi_t$ the flow of $\mathcal{Z}$ and fix $z\in S_0$. By (iii), there is an half-orbit $\omega_z$ which is contained in $\mathcal{V}$ such that $\mathcal{H}^1(\omega_z)\leq 1$ and $\partial \omega_z \in S_{\infty}$. Two cases may appear, either  $\mathcal{Z}(z')\neq 0$ for all  $z'\in \omega_z$ or there is $z'\in \omega_z$ such that  $\mathcal{Z}(z')= 0$. In the first case, we infer the existence of $\epsilon \in \{-1,1\}$ such that $\omega_z = \{ \varphi_{\epsilon t} (z) \, \vert \, t \geq 0\}$. However, in the second case , since $\omega_z$ is contained in $\Sigma^2_{tr}$ and the restriction of $\mathcal{Z}$ to $\Sigma^2_{tr}$ vanishes on $\mbox{Sing}(\Sigma_{\Delta}) \cap \Sigma^2_{tr}$, we only deduce that there is $\epsilon \in \{-1,1\}$ such that $\varphi_{\epsilon t} (z)$ tends to some point of $\mbox{Sing}(\Sigma_{\Delta}) \cap \Sigma^2_{tr}$ as $t$ tends to $+\infty$. In conclusion, if we set $S_{\infty}':=S_{\infty}\cup (\mbox{Sing}(\Sigma_{\Delta}) \cap\mathcal{V})$, then $S_{\infty}'$ has $2$-dimensional Hausdorff measure zero (by (ii) and because $\mbox{Sing} (\Sigma_{\Delta})$ and $\Sigma^2_{\tan}$ are analytic subset of $M$ of codimension at least two ) and there is $\epsilon \in \{-1,1\}$ and $S_0^{\epsilon} \subset S_0$ with $\mathcal{H}^2(S_0)>0$  such that for every $z\in S_0^{\epsilon}$ there holds
\begin{eqnarray}\label{22july2}
\int_{0}^{+\infty} \left| \mathcal{Z}  \left(\varphi_{\epsilon s}(z) \right) \right|   \, ds \leq 1, \quad \varphi_{\epsilon t}(z) \notin S_{\infty}' \, \forall t\geq 0, \quad \mbox{and} \quad  \lim_{t\rightarrow +\infty} d^g\left( \varphi_{\epsilon t} (z), S_{\infty}'\right)=0,
\end{eqnarray}
where $ \left| \mathcal{Z}  \left(\varphi_{\epsilon s}(z) \right) \right| $ stands form the norm of $ \mathcal{Z}  \left(\varphi_{\epsilon s}(z) \right)$ with respect to $g$ and $d^g(\cdot,S_{\infty}')$ denotes the distance to $S_{\infty}'$ with respect to $g$. Set for every $t\geq 0$,
$$
S_t := \varphi_{\epsilon t} \left( S_0^{\epsilon} \right)
$$
and denote by $\mbox{vol}^{\Sigma}$ (resp. $\mbox{div}^{\Sigma}$) the volume (resp. the divergence) associated with the Riemannian metric induced by $g$ on $\Sigma$ which is well defined only on the smooth part $\Sigma^2= \Sigma\setminus \mbox{Sing} (\Sigma)$ of $\Sigma$. If we plan to conclude as in the proof of Theorem \ref{THMdim3}, we need two things. On the one hand, we need to show that $\lim_{t\rightarrow +\infty} \mbox{vol}^{\Sigma} \left( S_t\right)=0$ and on the other hand we need to get a control of $\mbox{div}^{\Sigma}(\mathcal{Z})$ by $|\mathcal{Z}|$ on $\Sigma^2= \Sigma\setminus \mbox{Sing} (\Sigma)$. By (\ref{22july2}) and the fact that  $\mathcal{H}^2(S_{\infty}')=0$,  the asymptotic  property on the volume of $S_t$ could be obtained. However, a control on the divergence seems difficult to obtain due to the last term in the formula (see Proposition \ref{PROPdivcodim1})  
\begin{eqnarray*}
\mbox{div}_x^{\Sigma} \mathcal{Z} =   \mbox{div}_x^g \mathcal{Z} + \frac{ \left(\mathcal{Z} \cdot \left( |\nabla h|^2\right)\right) (x)}{2|\nabla_x h|^2},
\end{eqnarray*}
which holds for any $x\in \left( \Sigma\setminus \mbox{Sing} (\Sigma_{\Delta}) \right) \cap \mathcal{V}$, because $|\nabla_x h|^2$ tends to zero as $x$ in $ \left( \Sigma\setminus \mbox{Sing} (\Sigma_{\Delta}) \right) \cap \mathcal{V}$ tends to $\mbox{Sing} (\Sigma_{\Delta})$. Let us study what happens in the case of a conical singularity.

\subsection{The case of a conical singularity}\label{SECconical}

Assume in the present section that $\Delta$ is an analytic totally nonholonomic distribution in $\R^3$ spanned globally by the two vector fields $X$ and $Y$,  that the Martinet surface is given by  (see Figure 4)
\begin{eqnarray}\label{23july3}
\Sigma = \Bigl\{ x =(x_1,x_2,x_3) \in \R^3 \, \vert \, h(x)=0\Bigr\} \quad \mbox{with} \quad h(x) = x_3^2-x_1^2-x_2^2,
\end{eqnarray}
and recall that the vector field $\mathcal{Z}$ on $\R^3$ which allows to recover the trace of $\Delta$ on $\Sigma$ outside the singularities is defined by
\begin{eqnarray*}
\mathcal{Z} :=  \left(X\cdot h\right) \, Y -   \left( Y\cdot h\right) \, X.
\end{eqnarray*}
Moreover, recall also that if $g$ is a given metric on $\R^3$ then, by Lemma \ref{divKsing}, there is an open neighborhood of the origin and  $K>0$ such that 
\begin{eqnarray}\label{23july4}
\left| \mbox{div}_x^{g} \mathcal{Z} \right| \leq K \, |\mathcal{Z}(x)| \qquad \forall x\in \Sigma \cap \mathcal{V}.
\end{eqnarray}
In order to perform a blow-up of $\Sigma$ at the origin, we consider the smooth mapping $\sigma : \R^3 \rightarrow \R^3$ defined by
$$
\sigma (u,v,w) = \left( uw,vw,w\right)
$$
whose restriction to $\R^3 \setminus E$ with $E=\{w=0\}$ is a smooth diffeomorphism onto its image $\R^3\setminus \{x_3=0\}$. Denoting by $h^*, X^*, Y^*$ and $\mathcal{Z}^*$ the pull-back of $h, X, Y, \mathcal{Z}$ by $\sigma$ on $\Sigma\setminus E$  we note that 
$$
h^*(u,v,w) = w^2 \left( 1- u^2-v^2 \right) \qquad \forall (u,v,w) \in \R^3 \setminus E
$$
and by (\ref{EQZ})
$$
\mathcal{Z}^* :=  \left(X^*\cdot h^*\right) \, Y^* -   \left( Y^*\cdot h^*\right) \, X^* \qquad \mbox{on } \R^3 \setminus E.
$$

\begin{figure}\label{fig4}
\includegraphics[width=5cm]{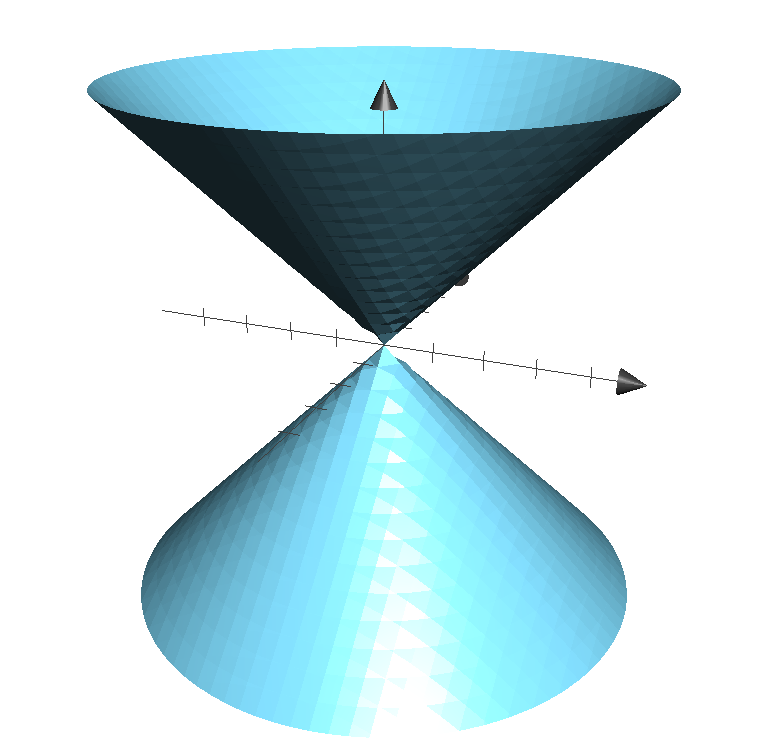} \hfill
\includegraphics[width=5cm]{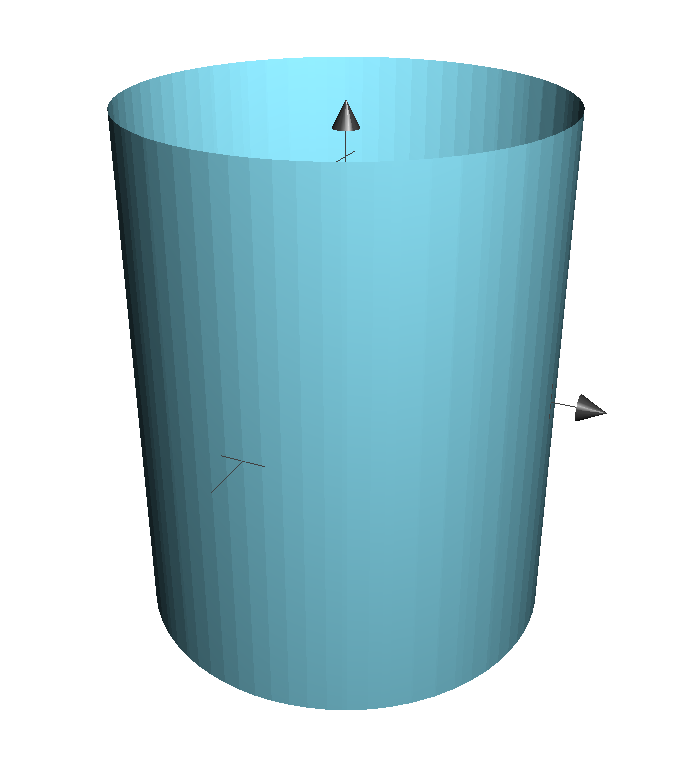}
\caption{The Martinet surfaces $\Sigma$ on the left and $\tilde{\Sigma}$ on the right}
\end{figure}

Then we set 
$$
\tilde{\Sigma} := \Bigl\{ (u,v,w) \in \R^3 \, \vert \, \tilde{h}(u,v,w)=0\Bigr\} \quad \mbox{with} \quad \tilde{h}(u,v,w) = 1 - u^2-v^2
$$
which is a smooth surface (see Figure 4) and
$$
\tilde{X} = w \, X^*, \quad \tilde{Y} = w \, Y^*, \quad \tilde{\mathcal{Z}} :=  \left(\tilde{X} \cdot \tilde{h}\right) \, \tilde{Y} -   \left( \tilde{Y} \cdot \tilde{h}\right) \, \tilde{X} \qquad \mbox{on } \R^3 \setminus E.
$$
We can notice that on $ \R^3 \setminus E$, we have
\begin{eqnarray*}
\mathcal{Z}^* & = &  \left(X^*\cdot \left( w^2 \, \tilde{h} \right)\right) \, Y^* -   \left( Y^*\cdot \left( w^2 \, \tilde{h} \right)\right) \, X^* \\
& = & w^2 \, \left[  \left(X^*\cdot \tilde{h}\right) \, Y^* -   \left( Y^*\cdot  \tilde{h}  \right) \, X^*  \right] + 2w \, \tilde{h} \, \left[  \left(X^*\cdot w \right) \, Y^* -   \left( Y^*\cdot  w  \right) \, X^*  \right] \\
& = &  \left(\tilde{X} \cdot \tilde{h}\right) \, \tilde{Y} -   \left( \tilde{Y} \cdot \tilde{h}\right) \, \tilde{X} + 2 \tilde{h} \, \left[  \left(X^*\cdot w \right) \, \tilde{Y} -   \left( Y^*\cdot  w  \right) \, \tilde{X}  \right] \\
& = & \tilde{\mathcal{Z}} + \tilde{h} \, \tilde{W},
\end{eqnarray*}
where $\tilde{W}$ is a vector field satisfying
\begin{eqnarray}\label{Wh}
\tilde{W} \cdot \tilde{h} & = & 2 \left[  \left(X^*\cdot w \right) \, \tilde{Y} -   \left( Y^*\cdot  w  \right) \, \tilde{X}  \right] \cdot \tilde{h} \nonumber\\
& = &\frac{2}{w}  \left[  \left(\tilde{X}\cdot w \right) \, \tilde{Y} \cdot \tilde{h}-   \left( \tilde{Y}\cdot  w  \right) \, \tilde{X} \cdot \tilde{h} \right] = - \frac{2}{w}\tilde{\mathcal{Z}}\cdot w.
\end{eqnarray}
Consider now the euclidean metric $\tilde{g} = du^2 +dv^2+dw^2$ in $(u,v,w)$ coordinates and denote by $\mbox{div}^{\tilde{g}}$ the associated divergence and by $\mbox{div}^{\tilde{\Sigma}}$ (resp. $\mbox{vol}^{\tilde{\Sigma}}$) the divergence (resp. the volume) associated to the metric induced on $\tilde{\Sigma}$ by $\tilde{g}$. Thanks to Proposition \ref{PROPdivcodim1} we have on $ \tilde{\Sigma} \setminus E$,
$$
\mbox{div}^{\tilde{\Sigma}}(\mathcal{Z}^*) = \mbox{div}^{\tilde{\Sigma}}(\tilde{\mathcal{Z}}) + \mbox{div}^{\tilde{\Sigma}}(\tilde{h}\tilde{W}) = \mbox{div}^{\tilde{\Sigma}}(\tilde{\mathcal{Z}}) =  \mbox{div}^{\tilde{g}}(\tilde{\mathcal{Z}}) + \frac{ \left| \left( \tilde{\mathcal{Z}} \cdot |\tilde{\nabla} \tilde{h}|^2\right) \right|}{2|\tilde{\nabla} \tilde{h}|^2},
$$
(where $\tilde{\nabla}\tilde{h}$ stands for the gradient of $\tilde{h}$ with respect to $\tilde{g}$ and the norm is taken with respect to $\tilde{g}$ as well) which implies, since $|\tilde{\nabla}\tilde{h}|^2=1$ on $\tilde{\Sigma}$, 
\begin{eqnarray*}
\mbox{div}^{\tilde{\Sigma}}(\mathcal{Z}^*) =  \mbox{div}^{\tilde{g}}(\tilde{\mathcal{Z}}).
\end{eqnarray*}
We can compute $\mbox{div}^{\tilde{g}}(\tilde{\mathcal{Z}})$, we have on $\tilde{\Sigma} \setminus E$,
$$
\mbox{div}^{\tilde{g}}(\tilde{\mathcal{Z}})  = \mbox{div}^{\tilde{g}}(\mathcal{Z}^*)  -  \mbox{div}^{\tilde{g}}\left( \tilde{h} \, \tilde{W} \right) =  \mbox{div}^{\tilde{g}}(\mathcal{Z}^*)  - \tilde{W}\cdot \tilde{h}.
$$
Moreover, if $\mathcal{Z} = A \, \partial_1 + B \, \partial_2 + C \, \partial_3$ then we have 
$$
\mathcal{Z}^*  = \left(  \frac{1}{w} \, A^* - \frac{u}{w} \, C^*\right) \, \partial_u +  \left(  \frac{1}{w} \, B^* - \frac{v}{w} \, C^*\right) \, \partial_v + C^* \, \partial_w
$$
which yields for $w\neq0$,
\begin{eqnarray*}
 \mbox{div}^{\tilde{g}}(\mathcal{Z}^*)  &= &  \frac{1}{w} \, \frac{ \partial A^*}{\partial u} - \frac{1}{w} \, C^* -  \frac{u}{w} \, \frac{\partial C^*}{\partial u}  + \frac{1}{w} \, \frac{ \partial B^*}{\partial v} - \frac{1}{w} \, C^* -  \frac{v}{w} \, \frac{\partial C^*}{\partial v} +  \frac{\partial C^*}{\partial w}      \\
 & = &    A_1^*  - \frac{1}{w} \, C^*  - u  \, C_1^*  + B_2^*  - \frac{1}{w} \, C^* - v\, C_2^* +  u\, C_1^* +  v\, C_2^* + C_3^* \\
 & = &  A_1^*  + B_2^* + C_3^* - \frac{2}{w} \, C^*\\
 & = & \left[\mbox{div}^g(\mathcal{Z})\right]^* - \frac{2}{w} \, \mathcal{Z}^*\cdot w =  \left[\mbox{div}^g(\mathcal{Z})\right]^* - \frac{2}{w} \, \tilde{\mathcal{Z}}\cdot w 
 \end{eqnarray*}
 on $\tilde{\Sigma}$ (because $\mathcal{Z}^*=\tilde{\mathcal{Z}}$), where $g$ denotes the euclidean metric (in coordinates $(x_1,x_2,x_3)$). In conclusion, by (\ref{Wh}) we have   
 \begin{eqnarray}\label{divconical}
\mbox{div}^{\tilde{\Sigma}} \left(\mathcal{Z}^*\right)  =  \left[\mbox{div}^g(\mathcal{Z})\right]^* \qquad \mbox{on } \tilde{\Sigma} \setminus E.
\end{eqnarray}
Assume now that the $\bar{x}$ given by Lemma  \ref{LEMxbar2} is equal to $0$ which is the unique singularity of $\Sigma$ and that there is a set $S_0$ of positive measure in $\Sigma \setminus \{\bar{x}\}$  such that 
$$
\varphi_t(z) \in \Sigma \setminus \{\bar{x}\} \quad \forall t \geq 0 \quad \mbox{and} \quad    \lim_{t \rightarrow +\infty} \varphi_t(z)=\bar{x} \qquad \forall z \in S_0,
$$
and
\begin{eqnarray}\label{23july29}
\int_{0}^{+\infty} \left| \mathcal{Z}  \left(\varphi_{s}(z) \right) \right|   \, ds \leq 1\qquad \forall z \in S_0,
\end{eqnarray}
where as before $\varphi_t$ denotes the flow of $\mathcal{Z}$. If we pull-back everything by $\sigma$, by setting 
$$
S_0^*=\sigma^{-1}(S_0) \quad \mbox{and} \quad S_t^* := \varphi_t^* \left( S_0^*\right) \qquad \forall t \geq 0,
$$
where  $\varphi_t^*$ denotes the flow of $\mathcal{Z}^*$, then all positive orbits starting from $S_0^*$ accumulate on the circle $E\cap \tilde{\Sigma}$ at infinity, so that
\begin{eqnarray}\label{23july1}
\lim_{t \rightarrow +\infty} \mbox{vol}^{\tilde{\Sigma}} (S_t^*)=0.
\end{eqnarray}
Then thanks to (\ref{divKsing}) and  (\ref{divconical})-(\ref{23july29}), we can repeat what we did  at the end of the proof of Theorem \ref{THMdim3} to get for every $t\geq 0$, 
\begin{eqnarray*}
\mbox{vol}^{\tilde{\Sigma}} \left(S_t^* \right) & = & \int_{S_0^*} \exp\left( \int_{0}^t  \mbox{div}^{\tilde{\Sigma}}_{\varphi_s^*(z)}\left( \mathcal{Z}^*\right)  \, ds\right) \,  d\mbox{vol}^{\tilde{\Sigma}}(z)\\
 & = & \int_{S_0^*} \exp\left( \int_{0}^t  \mbox{div}^{g}_{\sigma(\varphi_s^*(z))}\left( \mathcal{Z}\right)  \, ds\right) \,  d\mbox{vol}^{\tilde{\Sigma}}(z)\\
&\geq & \int_{S_0} \exp \left( -K \int_{0}^t  \left|\mathcal{Z}\left( \varphi_s(\sigma(z)) \right)\right| \, ds\right) \,  d\mbox{vol}^{\tilde{\Sigma}}(z)\\
& \geq & e^{-K} \,\mbox{vol}^{\tilde{\Sigma}}\left(S_0^* \right),
\end{eqnarray*}
which contradicts (\ref{23july1}) and conclude the proof of Theorem \ref{THMdim3sing} in the case where $\Sigma$ is given by (\ref{23july3}).

\subsection{Second part of the proof}\label{SECsecondpart}

Return to the proof of Theorem \ref{THMdim3sing} and, for sake of simplicity, set $S_0:=S_0^{\epsilon}\setminus \mbox{Sing}(\Sigma_{\Delta})$, $S_{\infty}=S_{\infty}'$ and assume that $\epsilon=1$. We recall that by construction, $S_0 \subset \Sigma \setminus \mbox{Sing}(\Sigma_{\Delta})$, $\mathcal{H}^2(S_0)>0$ and for every $z\in S_0$ there holds
\begin{eqnarray}\label{22july5}
\int_{0}^{+\infty} \left| \mathcal{Z}  \left(\varphi_{ s}(z) \right) \right|   \, ds \leq 1, \quad  \varphi_{ t}(z) \notin S_{\infty} \, \forall t\geq 0, \quad \mbox{and} \quad  \lim_{t\rightarrow +\infty} d^g\left( \varphi_{ t} (z), S_{\infty}\right)=0,
\end{eqnarray}
In the general case, as shown in the previous section, a way to conclude the proof of Theorem  \ref{THMdim3sing} is to find a change of coordinates outside the singularities in which $\Sigma$, denoted by $\tilde{\Sigma}$ in new coordinates, admits a volume form for which the divergence $\mbox{div}^{\tilde{\Sigma}}$ of the pull-back of $\mathcal{Z}$ can be compared with $\mbox{div}^g(\mathcal{Z})$ (see (\ref{divconical})) and for which the volume of $S_t:= \varphi_t ( S_0 )$ tends to zero as $t$ tends to $+\infty$.  This is the purpose of the next result (we recall that a mapping $\sigma: \mathcal{W} \rightarrow \mathcal{V}$ is proper if for every compact set  $K \subset \mathcal{V}$ the set $\sigma^{-1}(K)$ is compact as well).

\begin{proposition}\label{PROPMAIN}
There are a three dimensional manifold with corners $\mathcal{W}$, with boundary set $E \subset \mathcal{W}$, a smooth and proper mapping $\sigma: \mathcal{W} \rightarrow \mathcal{V}$, a smooth surface $\tilde{\Sigma} \subset \mathcal{W}$, and a smooth volume form $\tilde{\omega}$ on $\tilde{\Sigma} \setminus E$ such that the following properties are satisfied:
\begin{itemize}
\item[(i)] $\sigma( \tilde{\Sigma}) = \Sigma$, $\sigma(E)= \mbox{Sing} (\Sigma_{\Delta})$ and the restriction of $\sigma$ to $\mathcal{W}\setminus E$ is a diffeomorphism onto its image $\mathcal{V} \setminus \mbox{Sing}(\Sigma_{\Delta})$,
\item[(ii)] setting $Z^*:=\sigma^*Z$ on $\mathcal{W} \setminus E$ and $S_t^*=\sigma^{-1}(S_t)=\varphi_t^* (\sigma^{-1}(S_0)) \subset \tilde{\Sigma} \setminus E$, and denoting respectively by $\mbox{vol}^{\tilde{w}}$ and $\mbox{div}^{\tilde{\omega}}$ the volume and divergence operator associated to $\tilde{\omega}$ on $\tilde{\Sigma} \setminus E$ we have
\begin{eqnarray}\label{7aout2}      
\mbox{div}^{\tilde{\omega}} \left(\mathcal{Z}^*\right)  =  \left[\mbox{div}^g(\mathcal{Z})\right]^*  \mbox{ on } \tilde{\Sigma} \setminus E
\end{eqnarray}
and
\begin{eqnarray}\label{7aout3}
\lim_{t \rightarrow +\infty}    \mbox{vol}^{\tilde{w}} \left(   S_t^*\right) =0.
\end{eqnarray}
\end{itemize}
\end{proposition}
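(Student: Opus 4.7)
The plan is to obtain $\sigma$ by applying Hironaka's resolution of singularities, in the functorial form due to Bierstone and Milman, to the coherent analytic space $\Sigma_{\Delta}$ in a neighborhood of $\bar{x}$. This produces $\sigma : \mathcal{W} \rightarrow \mathcal{V}$ as a finite composition of blowups along smooth analytic centers contained in the successive singular loci, with $\mathcal{W}$ a real-analytic manifold with corners, $E$ a simple normal-crossings divisor, $\sigma$ proper and an analytic diffeomorphism over $\mathcal{V} \setminus \mbox{Sing}(\Sigma_{\Delta})$, and the strict transform $\tilde{\Sigma}$ of $\Sigma$ a smooth submanifold transverse to $E$. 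This immediately yields (i).

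To prove (\ref{7aout2}), I would work in local adapted coordinates $(u_1,u_2,u_3)$ on $\mathcal{W}$ in which the pullback factors as $h^{*} = m \cdot \tilde{h}$, with $m = \prod_i u_i^{a_i}$ a monomial defining the exceptional components in the chart and $\tilde{h}$ analytic with non-vanishing gradient along $\tilde{\Sigma} = \{\tilde{h}=0\}$. Since the pullbacks $X^{*}, Y^{*}$ of the generators of $\Delta$ generically blow up along $E$, I would introduce a suitable monomial rescaling $\tilde{X}, \tilde{Y}$ of $X^{*}, Y^{*}$, chosen so as to extend analytically and tangentially to $E$, and set $\tilde{\mathcal{Z}} := (\tilde{X}\cdot\tilde{h})\tilde{Y} - (\tilde{Y}\cdot\tilde{h})\tilde{X}$. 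Expanding $\mathcal{Z}^{*}$ via the Leibniz rule applied to $h^{*} = m \tilde{h}$ and collecting terms should produce a decomposition $\mathcal{Z}^{*} = \mu\, \tilde{\mathcal{Z}} + \tilde{h}\, \tilde{W}$ on $\mathcal{W} \setminus E$, with $\mu$ a smooth nonvanishing function on $\tilde{\Sigma}$ and $\tilde{W}$ a smooth vector field, mimicking formula (\ref{Wh}) of the conical model. The volume form $\tilde{\omega}$ will then be taken as the area form induced on $\tilde{\Sigma}$ by an auxiliary analytic Riemannian metric $\tilde{g}$ on $\mathcal{W}$, multiplied by an explicit weight involving $\mu$ and $|\tilde{\nabla}\tilde{h}|$ precisely tuned to absorb both the rescaling factor $\mu$ and the gradient correction appearing in Proposition \ref{PROPdivcodim1}. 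A direct computation patterned on the one yielding (\ref{divconical}) should then show that all correction terms cancel, leaving $\mbox{div}^{\tilde{\omega}}(\mathcal{Z}^{*}) = [\mbox{div}^{g}(\mathcal{Z})]^{*}$ on $\tilde{\Sigma} \setminus E$.

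For the asymptotic statement (\ref{7aout3}), properness of $\sigma$ together with the last clause of (\ref{22july5}) forces every forward trajectory $\varphi_t^{*}(z)$ issued from $\sigma^{-1}(S_0)$ to eventually enter arbitrarily small neighborhoods of the compact set $\sigma^{-1}(S_{\infty}) \subset \tilde{\Sigma} \cap E$. The transverse intersection of $\tilde{\Sigma}$ with the normal-crossings divisor $E$ makes $\tilde{\Sigma} \cap E$ of codimension at least one in $\tilde{\Sigma}$, hence $\tilde{\omega}$-negligible; shrinking $\mathcal{V}$ so that $\sigma^{-1}(\overline{\mathcal{V}})$ is compact provides a uniform integrable dominating function, and a dominated-convergence argument delivers $\mbox{vol}^{\tilde{\omega}}(S_t^{*}) \to 0$.

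The main obstacle lies in the second step: the exponents in the monomial rescaling of $X^{*}, Y^{*}$, the auxiliary metric $\tilde{g}$, and the weight defining $\tilde{\omega}$ must all be chosen coherently across every chart of the resolution, and the cancellation underlying (\ref{7aout2}) must be verified uniformly under the combinatorics of an arbitrary sequence of blowups with smooth centers of varying dimension. This is precisely where assumption (\ref{ASSTHM2}) enters: the non-transversality of $\Delta$ to $\mbox{Sing}(\Sigma_{\Delta})$ provides the required vanishing order of $X \cdot h$ and $Y \cdot h$ along the centers of the blowups, which in turn ensures that the rescaled vector fields $\tilde{X}, \tilde{Y}$ extend smoothly and tangentially to $E$ and that no transverse term obstructs the cancellation; without it, the extra transverse contributions correspond to the homoclinic-concatenation phenomenon illustrated in the introductory example.
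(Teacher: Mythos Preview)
Your outline for part (i) and for the divergence identity (\ref{7aout2}) is essentially the paper's approach: resolve $\Sigma_{\Delta}$ by a finite sequence of real blowups, write $h\circ\sigma=\alpha\,\tilde h$ with $\alpha$ a monomial in the exceptional functions, and take $\tilde\omega=\frac{1}{\xi}\,i_N\!\bigl(\frac{\beta}{\alpha}\lambda\bigr)$ where $\beta$ is the Jacobian monomial and $\lambda$ comes from a smooth metric on $\mathcal W$. The cancellation yielding (\ref{7aout2}) is then a short global computation (the paper's Lemma~\ref{LEMstep2}); it does not require rescaling $X^*,Y^*$ chart by chart.

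The genuine gap is your argument for (\ref{7aout3}). You assert that $\tilde\Sigma\cap E$ is $\tilde\omega$-negligible and invoke dominated convergence. But $\tilde\omega$ is typically \emph{singular} along $E$: with the normalizations above one has $\tilde\omega=\frac{\beta}{\alpha\xi}\tilde\lambda$ and $\beta/\alpha=\prod_i\rho_i^{\beta_i-\alpha_i}$, where $\beta_i=\operatorname{codim}(\mathcal C_i)-1\le 2$ while $\alpha_i=\mu_{\mathcal C_i}(h_{i-1})\ge 2$ since the centers lie in the singular locus. As soon as some $\alpha_i>\beta_i$ (which happens beyond the conical model), $\tilde\omega$ blows up near $E$; there is no integrable dominating function, and $\tilde\Sigma\cap E$ need not have $\tilde\omega$-measure zero. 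A priori $\operatorname{vol}^{\tilde\omega}(S_t^*)$ could even diverge.

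The paper handles this with a separate, nontrivial lemma (Lemma~\ref{LEMvolume}). One first observes that the rescaled field $Z:=\frac{\beta}{\alpha}\mathcal Z^*$ extends to a smooth vector field on all of $\tilde\Sigma$, tangent to $E$; \emph{this} is where hypothesis (\ref{ASSTHM2}) is actually used (Proposition~\ref{PROPresolution}(iv) and the induction in Claim~\ref{cl:step3b}), not in the cancellation behind (\ref{7aout2}). Dominated convergence then applies to the \emph{smooth} volume $\tilde\lambda$ and the flow of $Z$, giving $\operatorname{vol}^{\tilde\lambda}(\tilde S_t)\to 0$. To transfer this to $\tilde\omega$ and the flow of $\mathcal Z^*$, the paper uses the identity $\operatorname{div}^{f^{-1}v}(fV)=f\,\operatorname{div}^{v}(V)$, a time reparametrization $r(x,t)$ between the two flows, Egorov's theorem to make $r(x,t)\to\infty$ uniform after shrinking $K_0$, and a disjointness argument for the sets $\varphi^V_{r+k\delta}(K_0)$. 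This step is the substantive content of (\ref{7aout3}) and cannot be replaced by a direct dominated-convergence argument for $\tilde\omega$.
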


Proposition \ref{PROPMAIN} will follow from techniques of resolution of singularities. Its proof is given in the next section.

\section{Proof of Proposition \ref{PROPMAIN}}\label{SECPROPMAIN}

Our proof of Proposition \ref{PROPMAIN} relies on an elementary, but extensive, construction. It is based on a result of resolution of singularities (Proposition \ref{PROPresolution})  and two lemmas (Lemmas \ref{LEMstep2} and \ref{LEMvolume}). We present an overview of the proof in the next section, prove the lemmas in Sections \ref{ssecStep2} and \ref{ssecStep4}, and since it requires some material from algebraic geometry, we postpone the proof of Proposition  \ref{PROPresolution} to Section \ref{secRS}.

\subsection{Overview of the proof}\label{ssecOverview}

Recall that we are considering an open neighborhood $\mathcal{V}$ of $\bar{x}$, an analytic function $h:\mathcal{V} \rightarrow \R$ such that
$$
\Sigma \cap \mathcal{V} =\Bigl\{ x\in \mathcal{V} \, \vert \, h(x)=0\Bigr\}
\quad
\mbox{and} \quad \mbox{Sing}(\Sigma_{\Delta})\cap\mathcal{V} =  \Bigl\{x\in \mathcal{V} \, \vert \, h(x)=0, \, d_xh=0\Bigr\},
$$
and an analytic vector-field $\mathcal{Z}$ on $\mathcal{V}$ defined by
\begin{eqnarray}\label{EQZ3}
\mathcal{Z} :=  \left(X\cdot h\right) \, Y -   \left( Y\cdot h\right) \, X,
\end{eqnarray}
which generated $L_{\Delta}$ outside of the singular locus of the analytic space $\Sigma_{\Delta}$, where $X$ and $Y$ are local analytic sections of $\Delta$.  Note that $\mathcal{V}$ can be assumed to be orientable and let us denote by $\omega=\mbox{vol}^g$ the volume form associated with the metric $g$ on $\mathcal{V}$. 

\begin{proposition}\label{PROPresolution}[Resolution of singularities]
There exist a three-dimensional orientable Riemannian manifold with corners $\mathcal{W}$, with boundary set $E \subset \mathcal{W}$, a smooth and proper morphism $\sigma : \mathcal{W} \to \mathcal{V}$, a number $r\in \mathbb{N}$, and smooth functions $\rho_i : \mathcal{W} \to \mathbb{R}$ for $i =1, \ldots, r$ which are strictly positive on $\mathcal{W} \setminus E$ such that the following properties hold:
\begin{itemize}
\item[(i)] The restriction of $\sigma$ to $\mathcal{W}\setminus E$ is a diffeomorphism onto its image $\mathcal{V} \setminus \mbox{Sing}(\Sigma_{\Delta})$ and $\sigma(E)= \mbox{Sing} (\Sigma_{\Delta})$. 
\item[(ii)] There exist a smooth function $\tilde{h} : \mathcal{W} \to \mathbb{R}$ and exponents $\alpha_i \in \mathbb{N}$ for $i=1, \ldots, r$ such that
\[
h \circ \sigma =  \alpha  \cdot \tilde{h} \quad \text{ with } \quad \alpha:= \prod_{i=1}^r \rho_i^{\alpha_i}.
\]
Furthermore, the surface $\tilde{\Sigma} =  \{\tilde{h}=0\}$ is an analytic sub-manifold of $\mathcal{W}$ such that $\sigma( \tilde{\Sigma}) = \Sigma$ and $\tilde{\Sigma} \cap \{d\tilde{h}=0\} = \emptyset$.
\item[(iii)] There exist a smooth metric $\tilde{g}$ over $\mathcal{W}$ and exponents $\beta_i \in \mathbb{N}$ for $i=1, \ldots, r$ such that
\[
d\sigma^{\ast}(\omega)  = \beta \cdot  \lambda \quad \text{ with } \quad \beta:= \prod_{i=1}^r \rho_i^{\beta_i},
\]
where $\lambda$ is the volume form associated with the metric $\tilde{g}$.
\item[(iv)] There exists a smooth vector field $Z$ over $\tilde{\Sigma}$ which is given by the following expression at points in $\tilde{\Sigma}\setminus E$:
\[
Z: = \frac{\beta}{\alpha} \cdot\mathcal{Z}^{\ast} =  \prod_{i=1}^r \rho_i^{\beta_i-\alpha_i} \cdot \mathcal{Z}^{\ast}. 
\]
where $\mathcal{Z}^{\ast}$ denotes the pulled-back vector-field $\mathcal{Z}$, that is, $\mathcal{Z}^{\ast}:= d\sigma^{-1}(\mathcal{Z})$, which is well-defined outside of $E$. Moreover, since $\Delta(y) \cap T_y(\mbox{Sing}(\Sigma_{\Delta})) = T_y(\mbox{Sing}(\Sigma_{\Delta}))$ for all $y\in \mbox{Sing}(\Sigma_{\Delta})$,  $Z$ is tangent to $\Sigma \cap E$.
\end{itemize}
\end{proposition}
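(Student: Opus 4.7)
The plan is to apply the functorial resolution of singularities of Bierstone-Milman to the coherent ideal generated by $h$ on $\mathcal{V}$, and then to verify the four assertions by local computations in the resulting coordinate charts. The smooth manifold-with-corners structure on $\mathcal{W}$ will arise by keeping track of the iterated exceptional divisors during the resolution.

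First I would invoke the resolution theorem applied to the principal coherent ideal $(h) \subset \mathcal{O}_{\mathcal{V}}$. This produces a finite composition of admissible blow-ups with smooth analytic centers, yielding a proper birational morphism $\sigma: \mathcal{W} \to \mathcal{V}$ whose exceptional divisor $E = \sigma^{-1}(\mbox{Sing}(\Sigma_{\Delta}))$ has simple normal crossings. The irreducible components $E_1, \ldots, E_r$ are locally given by $\{\rho_i = 0\}$, and the total transform of $h$ factors as $h \circ \sigma = \prod_i \rho_i^{\alpha_i} \cdot \tilde{h}$ with $\tilde{h}$ smooth and $\tilde{\Sigma}:=\{\tilde{h}=0\}$ a smooth hypersurface meeting $E$ transversely, so in particular $\tilde{\Sigma} \cap \{d\tilde{h}=0\} = \emptyset$. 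By the functoriality of the algorithm, the centers of blow-up lie above $\mbox{Sing}(\Sigma_{\Delta})$, so $\sigma$ restricts to a diffeomorphism $\mathcal{W} \setminus E \to \mathcal{V} \setminus \mbox{Sing}(\Sigma_{\Delta})$. This establishes $(i)$ and $(ii)$.

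For $(iii)$, I would compute the Jacobian of each individual blow-up: if $\pi: \tilde{U} \to U$ is a blow-up along a smooth submanifold of codimension $c$, then $\pi^{\ast} \omega = \rho^{c-1} \cdot \lambda_0$ where $\rho$ is a defining equation of the new exceptional divisor and $\lambda_0$ is a smooth local volume form. Composing these contributions along the tower of blow-ups yields $d\sigma^{\ast} \omega = \beta \cdot \lambda$ with $\beta = \prod \rho_i^{\beta_i}$, and $\lambda$ is the volume form of a smooth Riemannian metric $\tilde{g}$ on $\mathcal{W}$ obtained by gluing local expressions via a partition of unity. For $(iv)$, the key local computation is to use $\mathcal{Z} = (Xh)Y - (Yh)X$ together with $h \circ \sigma = \alpha \tilde{h}$; the Leibniz rule gives $\sigma^{\ast}(Xh) = \alpha \cdot ((\sigma^{\ast} X) \cdot \tilde{h}) + \tilde{h} \cdot ((\sigma^{\ast} X) \cdot \alpha)$, and similarly for $Y$, so on $\tilde{\Sigma}$:
\[
\mathcal{Z}^{\ast}\big|_{\tilde{\Sigma}} = \alpha \cdot \bigl[ ((\sigma^{\ast} X) \cdot \tilde{h}) \, \sigma^{\ast} Y - ((\sigma^{\ast} Y) \cdot \tilde{h}) \, \sigma^{\ast} X \bigr].
\]
The lifted vector fields $\sigma^{\ast} X$ and $\sigma^{\ast} Y$ carry meromorphic poles along $E$ whose orders are controlled by the same Jacobian data governing $\beta$. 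Multiplication by $\beta/\alpha$ cancels these poles precisely, producing a smooth vector field $Z$ on $\tilde{\Sigma}$. The tangency of $Z$ to $\Sigma \cap E$ will follow from assumption (\ref{ASSTHM2}): at every point of $\mbox{Sing}(\Sigma_{\Delta})$, the distribution $\Delta$ contains $T\mbox{Sing}(\Sigma_{\Delta})$, a condition which propagates through blow-ups whose centers lie in $\mbox{Sing}(\Sigma_{\Delta})$ and ensures that $Z$ preserves each component of $E$ meeting $\tilde{\Sigma}$.

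The main obstacle will be step $(iv)$. While $(i)$--$(iii)$ reduce to classical resolution technology plus routine Jacobian bookkeeping, the precise pole cancellation for $\mathcal{Z}^{\ast}$ requires a careful chart-by-chart analysis verifying that, along each $E_i$, the pole orders of $\mathcal{Z}^{\ast}$ are exactly those predicted by $\alpha/\beta$. This in turn rests on the specific bracket-like structure of $\mathcal{Z}$ and on the admissibility of the centers selected by the resolution algorithm (so that every center is contained in the current strict transform of $\Sigma$, keeping $X$ and $Y$ compatible with the divisorial data). Translating assumption (\ref{ASSTHM2}) into the tangency statement for $Z$ on $\Sigma \cap E$ is likely the most delicate point, and is where the bulk of the argument in Section \ref{secRS} will be concentrated.
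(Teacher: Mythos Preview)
Your overall strategy matches the paper's: apply functorial resolution to the ideal $(h)$, read off $(i)$--$(iii)$ from the standard Jacobian bookkeeping, and then work chart-by-chart for $(iv)$. Parts $(i)$--$(iii)$ are indeed handled essentially as you describe (with the technical wrinkle that the paper uses \emph{real} blow-ups to obtain the manifold-with-corners structure, and develops Theorem~\ref{THMhironaka} specifically to ensure the usual resolution lifts to this setting).

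However, your proposed mechanism for the pole cancellation in $(iv)$ has a genuine gap. You claim that the pole orders of $\sigma^*X,\sigma^*Y$ along each $E_i$ are ``controlled by the same Jacobian data governing $\beta$'', so that multiplying by $\beta$ clears them. This is false when the center of a blow-up is a curve (codimension two). In that case $\beta_k=1$, but each of $\sigma^*X,\sigma^*Y$ generically acquires a simple pole, so the bracket $[(\sigma^*X\cdot\tilde h)\,\sigma^*Y-(\sigma^*Y\cdot\tilde h)\,\sigma^*X]$ has pole order two, and multiplication by $\rho^{\beta_k}=\rho$ is one power short. The paper closes this gap not through Jacobian counting but through a change of local frame: using the identity
\[
(X\cdot H)Y-(Y\cdot H)X=(F_1G_2-F_2G_1)\bigl[(\tilde X\cdot H)\tilde Y-(\tilde Y\cdot H)\tilde X\bigr]
\]
one replaces $(X,Y)$ by a pair $(\tilde X,\tilde Y)$ where $\tilde Y$ is tangent to the center, so that $d\sigma^{-1}\tilde Y$ extends smoothly across $E$ and only $d\sigma^{-1}\tilde X$ carries a simple pole. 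The existence of such a frame is exactly what assumption~\eqref{ASSTHM2} buys you: it is reformulated as the ``$\mathcal S$-adapted'' condition (Claim~\ref{cl:step3a}), which gives the determinant relation $\det\bigl[\begin{smallmatrix}X\cdot y_1 & X\cdot y_2\\ Y\cdot y_1 & Y\cdot y_2\end{smallmatrix}\bigr]\in(y_1,y_2)$ along the curve and hence permits the required elimination. So~\eqref{ASSTHM2} is not merely used for the tangency of $Z$ to $E$ at the end---it is essential for the very smoothness of $Z$ along exceptional divisors coming from one-dimensional centers. The paper packages all this into an induction on the tower of blow-ups (Claim~\ref{cl:step3b}), tracking at each stage a transformed singular distribution $\Delta_k\subset Der(-\log E_k)$ and showing that $Z_k$ can be written locally as $\xi_k[(X_k\cdot h_k)Y_k-(Y_k\cdot h_k)X_k]$ with $X_k,Y_k\in\Delta_k$; the tangency to $E$ then comes for free from $\Delta_k\subset Der(-\log E_k)$ rather than as a separate argument.
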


Now, we consider the smooth function $\xi: \tilde{\Sigma} \to \mathbb{R}$ and the normal vector $N:\tilde{\Sigma} \to T \mathcal{W}$ given by 
$$
\xi (x) := \bigl| \tilde{\nabla}\tilde{h}  \bigr|^{\tilde{g}}_x \quad \mbox{and} \quad N(x) = \frac{ \tilde{\nabla}_x\tilde{h}}{ | \tilde{\nabla}_x\tilde{h}  |^{\tilde{g}}_x } \qquad \forall x \in \tilde{\Sigma},
$$
where $|\cdot|^{\tilde{g}}, \tilde{g}$ denotes respectively the norm and connection associated with $\tilde{g}$. Note that $\xi$ does not vanish. Then, we define
\[
\tilde{\omega} : =  \frac{1}{\xi} \cdot i_{N}\left( \frac{1}{\alpha} \cdot  d\sigma^{\ast}(\omega)  \right) = \frac{1}{\xi} \cdot i_{N} \left( \frac{\beta}{\alpha} \cdot   \lambda \right)
\]
which is a smooth volume form on  $\tilde{\Sigma}\setminus E$ and we denote by $\tilde{\lambda}$ the volume form $\tilde{\lambda} :=   i_{N} \lambda$ which is defined over $\tilde{\Sigma}$. The following result (which extends (\ref{divconical}) which was obtained in the conical case) yields (\ref{7aout2}).

\begin{lemma}\label{LEMstep2}
For every  $ x \in \tilde{\Sigma}\setminus E$, we have $\mbox{div}^{\tilde{\omega}}_x (\mathcal{Z}^{\ast}) = \left[ \mbox{div}^{g}(\mathcal{Z}) \right]^{\ast}_x:=  \mbox{div}^{g}_{\sigma(x)}(\mathcal{Z})$.
\end{lemma}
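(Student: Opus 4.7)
The approach is to compute $\mathcal{L}_{\mathcal{Z}^{\ast}}\tilde{\omega}$ on $\tilde{\Sigma}\setminus E$ in two different ways and compare. Set $\omega_3:=\frac{1}{\alpha}\sigma^{\ast}\omega=\frac{\beta}{\alpha}\lambda$, which is a smooth volume form on $\mathcal{W}\setminus E$. Since Proposition~\ref{PROPresolution}(ii) gives $d\tilde{h}\neq 0$ along $\tilde{\Sigma}$, the function $\xi$ is strictly positive on a neighborhood of $\tilde{\Sigma}\setminus E$, and the vector field $V:=\tilde{\nabla}\tilde{h}/\xi^2$ satisfies $V\cdot\tilde{h}=1$ there. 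From the identity $d\tilde{h}\wedge i_V\omega_3=(V\cdot\tilde{h})\,\omega_3$ (obtained by applying $i_V$ to the vanishing $4$-form $d\tilde{h}\wedge\omega_3$), I get the decomposition $\omega_3=d\tilde{h}\wedge\mu$ with $\mu:=i_V\omega_3=\frac{1}{\xi}\,i_N\omega_3$; by construction, the pullback of $\mu$ to $\tilde{\Sigma}\setminus E$ equals $\tilde{\omega}$.

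The key algebraic input is the identity $\mathcal{Z}\cdot h\equiv 0$, which is immediate from formula~(\ref{EQZ3}). Pulling it back via $h\circ\sigma=\alpha\tilde{h}$ produces
\[
(\mathcal{Z}^{\ast}\cdot\alpha)\,\tilde{h}+\alpha\,(\mathcal{Z}^{\ast}\cdot\tilde{h})=0 \quad \text{on } \mathcal{W}\setminus E.
\]
Restricting to $\tilde{\Sigma}$ yields $\mathcal{Z}^{\ast}\cdot\tilde{h}=0$ along $\tilde{\Sigma}\setminus E$ (since $\alpha>0$ off $E$), so $\mathcal{Z}^{\ast}$ is tangent to $\tilde{\Sigma}\setminus E$. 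Differentiating the above identity at a point $p\in\tilde{\Sigma}\setminus E$ and using $\tilde{h}(p)=0$ and $(\mathcal{Z}^{\ast}\cdot\tilde{h})(p)=0$, I obtain the crucial pointwise relation
\[
d(\mathcal{Z}^{\ast}\cdot\tilde{h})(p)=-\frac{(\mathcal{Z}^{\ast}\cdot\alpha)(p)}{\alpha(p)}\,d\tilde{h}(p),
\]
which will absorb the otherwise singular term $(\mathcal{Z}^{\ast}\cdot\alpha)/\alpha$.

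Now I compute $\mathcal{L}_{\mathcal{Z}^{\ast}}\omega_3$ in two ways. Off $E$, naturality of the Lie derivative under the diffeomorphism $\sigma$ yields $\mathcal{L}_{\mathcal{Z}^{\ast}}\sigma^{\ast}\omega=\sigma^{\ast}(\mathcal{L}_{\mathcal{Z}}\omega)=\left[\mbox{div}^g(\mathcal{Z})\right]^{\ast}\sigma^{\ast}\omega$, whence
\[
\mathcal{L}_{\mathcal{Z}^{\ast}}\omega_3=\Bigl(\left[\mbox{div}^g(\mathcal{Z})\right]^{\ast}-\frac{\mathcal{Z}^{\ast}\cdot\alpha}{\alpha}\Bigr)\,\omega_3.
\]
On the other hand, the Leibniz rule applied to $\omega_3=d\tilde{h}\wedge\mu$ gives $\mathcal{L}_{\mathcal{Z}^{\ast}}\omega_3=d(\mathcal{Z}^{\ast}\cdot\tilde{h})\wedge\mu+d\tilde{h}\wedge\mathcal{L}_{\mathcal{Z}^{\ast}}\mu$. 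Plugging in the pointwise relation for $d(\mathcal{Z}^{\ast}\cdot\tilde{h})$ at $p\in\tilde{\Sigma}\setminus E$ and equating the two expressions, the singular terms $(\mathcal{Z}^{\ast}\cdot\alpha)/\alpha$ cancel and I am left with
\[
d\tilde{h}(p)\wedge\bigl(\mathcal{L}_{\mathcal{Z}^{\ast}}\mu-\left[\mbox{div}^g(\mathcal{Z})\right]^{\ast}\mu\bigr)\big|_p=0.
\]

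To conclude, note that at $p\in\tilde{\Sigma}\setminus E$ the kernel of the map $\nu\mapsto d\tilde{h}(p)\wedge\nu$ on $\Lambda^2 T_p^{\ast}\mathcal{W}$ consists precisely of $2$-forms of the shape $d\tilde{h}(p)\wedge\eta$, which pull back to zero on $T_p\tilde{\Sigma}=\ker d\tilde{h}(p)$. Hence the pullback of $\mathcal{L}_{\mathcal{Z}^{\ast}}\mu-\left[\mbox{div}^g(\mathcal{Z})\right]^{\ast}\mu$ to $\tilde{\Sigma}$ vanishes, i.e., $(\mathcal{L}_{\mathcal{Z}^{\ast}}\mu)|_{\tilde{\Sigma}}=\left[\mbox{div}^g(\mathcal{Z})\right]^{\ast}\tilde{\omega}$. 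Since $\mathcal{Z}^{\ast}$ is tangent to $\tilde{\Sigma}$, naturality of the Lie derivative under the inclusion $\tilde{\Sigma}\hookrightarrow\mathcal{W}$ gives $(\mathcal{L}_{\mathcal{Z}^{\ast}}\mu)|_{\tilde{\Sigma}}=\mathcal{L}_{\mathcal{Z}^{\ast}}\tilde{\omega}$, producing the required equality $\mbox{div}_x^{\tilde{\omega}}(\mathcal{Z}^{\ast})=\left[\mbox{div}^g(\mathcal{Z})\right]^{\ast}_x$. The main obstacle is that the divergence of $\mathcal{Z}^{\ast}$ with respect to the ambient $3$-form $\omega_3$ carries the term $(\mathcal{Z}^{\ast}\cdot\alpha)/\alpha$, which is unbounded near $E$; the whole point of the argument is that this term disappears upon restricting from $\omega_3$ to its $2$-dimensional factor $\mu$ on $\tilde{\Sigma}$, and that cancellation is forced by the tangency identity coming from $\mathcal{Z}\cdot h=0$.
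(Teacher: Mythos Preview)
Your proof is correct and takes a genuinely different route from the paper's own argument. The paper works with the explicit decomposition $\mathcal{Z}^{\ast}=\tilde{\mathcal{Z}}+\tilde{h}\,\tilde{W}$ (equation~(\ref{10aout2})) and then chases divergences through a chain of volume forms $\tilde{\omega}\to\tilde{\lambda}\to\lambda$ using the scalar-rescaling formula~(\ref{9aout1}) and the codimension-one identity of Proposition~\ref{PROPdivcodim1}, checking at the end that the four correction terms $-\frac{\mathcal{Z}^{\ast}\cdot\beta}{\beta}$, $+\frac{\tilde{\mathcal{Z}}\cdot\alpha}{\alpha}$, $+\frac{\tilde{\mathcal{Z}}\cdot\xi}{\xi}$ and $\frac{\alpha\xi}{\beta}\,\tilde{\mathcal{Z}}\cdot\frac{\beta}{\alpha\xi}$ cancel. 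Your argument bypasses this bookkeeping by factoring the ambient $3$-form as $\omega_3=d\tilde{h}\wedge\mu$ and isolating the cancellation in a single pointwise identity $d(\mathcal{Z}^{\ast}\cdot\tilde{h})=-\frac{\mathcal{Z}^{\ast}\cdot\alpha}{\alpha}\,d\tilde{h}$ on $\tilde{\Sigma}\setminus E$, obtained by differentiating $\mathcal{Z}^{\ast}\cdot(\alpha\tilde{h})=0$. This is cleaner and more conceptual: it makes transparent why the potentially unbounded term $(\mathcal{Z}^{\ast}\cdot\alpha)/\alpha$ disappears upon restriction to $\tilde{\Sigma}$, and it never needs to introduce $\tilde{W}$, $\xi$, $\beta$ or Proposition~\ref{PROPdivcodim1} separately. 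The paper's approach, in turn, has the advantage of making every individual term visible, which matches the style of the earlier conical computation in Section~\ref{SECconical}.
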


Recall that by the construction performed in Section \ref{SECsecondpart}, there is a set $S_0 \subset \Sigma \setminus \mbox{Sing}(\Sigma_{\Delta})$ with $\mathcal{H}^2(S_0)>0$ such that  for every $z\in S_0$, the three properties in (\ref{22july5}) hold. Since the set $S_t:= \varphi_t ( S_0 )$ is contained in $\Sigma \setminus Sing(\Sigma_{\Delta})$ for all $t\geq 0$ and $Z$ differs from $\mathcal{Z}^{\ast}$ by a positive function in $\tilde{\Sigma} \setminus E$, we infer that all orbits of $Z$ starting from $\tilde{S}_0:= \sigma^{-1}(S_0)$ converge to $E \cap \tilde{\Sigma}$. More precisely, since $S_{\infty}$ is compact and $\sigma$ is a proper morphism, there is a relatively compact set $\tilde{S}_{\infty}$ with $\sigma(\tilde{S}_{\infty}) \subset S_{\infty}$ such that all the orbits $\varphi_{ t}^Z (x)$ with $x\in \tilde{S}_0$ tend to $\tilde{S}_{\infty}$ as $t$ tends to $+\infty$. By the dominated convergence  Theorem, we deduce easily that the sets $\tilde{S_t}$ defined by $\tilde{S}_t :=\varphi_t^{Z} (\sigma^{-1}(S_0)) \subset \tilde{\Sigma} \setminus E$ for every $t \in [0,\infty)$ satisfy (because $\lambda$ is not degenerate over $E$)
\begin{eqnarray}\label{7aout1}
\lim_{t \rightarrow +\infty}    \mbox{vol}^{\tilde{\lambda}} \bigl( \tilde{S}_t\bigr) =0.
\end{eqnarray}
Then (\ref{7aout3}) will follow readily from the following lemma.

\begin{lemma}\label{LEMvolume}
Let $\mathcal{W}$ be a three dimensional manifold with corners with boundary set $E \subset \mathcal{W}$ and $\mathcal{S}$ be a smooth submanifold of $\mathcal{W}$ of dimension two, let $v$ a smooth volume form on $\mathcal{S}$, $V$ a smooth vector field on $\mathcal{W}$ whose restriction to $\mathcal{S}$ is tangent to $\mathcal{S}$, and $f:\mathcal{S}\setminus E  \rightarrow (0,+\infty)$ be a smooth function.  Let $\hat{v}$ and $\hat{V}$ be the volume form and vector field defined respectively by 
$$
\hat{v} :=  (f)^{-1} \, v  \quad\mbox{and} \quad \hat{V} := f \, V,
$$
which are well-defined (and smooth)  on $\mathcal{S} \setminus E$. Denoting respectively by $\varphi_t^V, \varphi_t^{\hat{V}}$ the flows of $V, \hat{V}$, assume that there are two relatively compact sets $K_0, K_{\infty} \subset \mathcal{W}$  such that the following properties are satisfied:
\begin{itemize}
\item[(P1)] $K_0 \subset  \mathcal{S} \setminus E$ and $\mbox{vol}^{v}(K_0)>0$.
\item[(P2)]  $ K_{\infty} \subset \mathcal{S} \cap E$ and $\mbox{vol}^{v}(K_{\infty})=0$.
\item[(P3)] The sets $K_t:=\varphi_t^V(K_0) \subset \mathcal{S}\setminus E$ and $\hat{K}_t:=\varphi_t^{\hat{V}}(K_0) \subset \mathcal{S} \setminus E$ are defined for all $t  \in [0,+\infty)$. 
\item[(P4)] The intersection $\overline{K_t} \cap E$ is empty for all $t \in [0,\infty)$.
\item[(P5)] For every $x \in K_0$, the $\omega$-limit set of the orbit of $V$ through $x$ is contained in $K_{\infty}$.
\end{itemize}
Then, apart from shrinking $K_0=\hat{K}_0$, we can guarantee that
\begin{eqnarray}\label{7aout99}
\mbox{vol}^{\hat{v}} \bigl(\hat{K}_0 \bigr)>0  \quad \mbox{and} \quad
\lim_{t \to +\infty}  \mbox{vol}^{\hat{v}} \bigl(\hat{K}_t \bigr)   = 0.
\end{eqnarray}
\end{lemma}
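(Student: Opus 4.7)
The plan is to relate the two flows via time-reparametrization, upgrade pointwise convergence of the orbits to $K_\infty$ to uniform convergence on a subset of $K_0$ of positive measure, and finally transfer the resulting outer-regularity estimate from $v$ to $\hat{v}$ by means of local flow-box coordinates.

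First, since $\hat{V}=fV$ with $f>0$ on $\mathcal{S}\setminus E$, both vector fields share their oriented orbits there, and for each $x\in K_0$ one has $\varphi_t^{\hat{V}}(x)=\varphi_{\tau(t,x)}^V(x)$ with $\tau(t,x):=\int_0^t f(\varphi_s^{\hat{V}}(x))\,ds$. The first step is to check that $\tau(t,x)\to+\infty$ as $t\to+\infty$ for every $x\in K_0$: otherwise $\varphi_t^{\hat{V}}(x)$ would converge, by (P3)-(P4), to a point $y^{\ast}\in\mathcal{S}\setminus E$ at which $\hat{V}$, and hence $V$, would have to vanish, which is incompatible with the $V$-orbit of $x\ne y^{\ast}$ reaching $y^{\ast}$ in finite time (by ODE uniqueness) combined with (P5). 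Consequently the $\omega$-limit set of the $\varphi^{\hat{V}}$-orbit through $x$ coincides with that of the $\varphi^V$-orbit; by (P5) and the compactness provided by (P4), this upgrades to pointwise convergence $d^g\bigl(\varphi_t^{\hat{V}}(x),K_\infty\bigr)\to 0$ on $K_0$.

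Next, applying Egorov's theorem to the finite measure space $(K_0,v)$ produces a Borel subset $K_0'\subset K_0$ with $\mbox{vol}^v(K_0')\ge \mbox{vol}^v(K_0)/2>0$ on which the convergence is uniform; one then relabels $K_0:=K_0'$. Since $K_0$ is now compactly contained in $\mathcal{S}\setminus E$ and $f^{-1}$ is smooth and positive there, $\mbox{vol}^{\hat{v}}(K_0)>0$ as well. Uniform convergence says that for every open neighbourhood $U$ of $K_\infty$ in $\mathcal{S}$ one has $\hat{K}_t\subset U$ for all large $t$. As $v$ is smooth on all of $\mathcal{S}$ and $\mbox{vol}^v(K_\infty)=0$ by (P2), outer regularity yields $\mbox{vol}^v(\hat{K}_t)\to 0$.

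To upgrade this to $\mbox{vol}^{\hat{v}}(\hat{K}_t)\to 0$, which is delicate because $\hat{v}$ may blow up near $E$, the plan is to shrink $K_0$ further to a thin flow box $\{\varphi_s^V(y):y\in\Gamma,\,s\in[0,\delta]\}$ around a Lebesgue-density point of $K_0$ where $V\ne 0$, and to work in the associated $\varphi^{\hat{V}}$-flow-box coordinates $(y,\hat{t})$ on the forward tube. A direct computation based on the shared-orbit reparametrization yields
\[
\mbox{vol}^{\hat{v}}\bigl(\hat{K}_t\bigr)=\int_\Gamma\int_t^{t+\hat{\delta}(y)}\hat{\rho}(y,\hat{t}')\,d\hat{t}'\,dy,
\]
where $\hat{\delta}(y):=\int_0^\delta f(y,s)^{-1}\,ds$ is the $\hat{V}$-time width of $\hat{K}_t$ along the orbit of $y$ -- independent of $t$ and uniformly bounded on the compact $\Gamma$ -- and the $\hat{v}$-density $\hat{\rho}$ coincides with a coordinate component of the smooth $1$-form $\omega_{\mathrm{flux}}:=i_Vv=i_{\hat{V}}\hat{v}$, which extends smoothly to all of $\mathcal{S}$ because $V$ does. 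The main obstacle will be the pointwise vanishing of the integrand as $\hat{t}'\to+\infty$: this should follow from the fact that $V$ is tangent to $K_\infty$ at its smooth points (because the $\omega$-limit sets are $V$-invariant), which forces the $dy$-component of $\omega_{\mathrm{flux}}$ to vanish on $K_\infty$. Handling this geometric step carefully -- perhaps by a further Egorov shrinking, or by exploiting the finite $v$-mass of the forward tube to get tail decay of $\hat{\rho}$ along each orbit -- will be the bulk of the remaining work; dominated convergence with a uniform bound on $\hat{\rho}$ then concludes.
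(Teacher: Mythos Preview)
Your reparametrization and Egorov steps match the paper's, but the final step --- where you yourself flag ``the bulk of the remaining work'' --- is a real gap, and your two suggested fixes have different fates.

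The heuristic ``$V$ is tangent to $K_\infty$, so the $dy$-component of $i_Vv$ vanishes there'' does not do what you want. In your flow-box coordinates one has $\hat\rho(y,\hat t')=(i_Vv)\bigl(J_1(y,\hat t')\bigr)$, where $J_1=\partial_y\varphi^{\hat V}_{\hat t'}(y)$ is the transversal Jacobian of the flow. Tangency of $V$ to $K_\infty$ only kills $i_Vv$ on vectors \emph{tangent} to $K_\infty$; it says nothing about $J_1$, which can be transversal and, worse, can grow without bound as $\hat t'\to\infty$. So smoothness of the flux form on all of $\mathcal S$ does not by itself bound $\hat\rho$, let alone force it to $0$. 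Your other suggestion --- finite $v$-mass of the forward $V$-tube --- can in fact be pushed through: in $(y,s)$ coordinates (with $s$ the $V$-time) one checks $\hat\rho(y,\hat t')=\rho_V\bigl(y,s(y,\hat t')\bigr)$ where $\rho_V$ is the $v$-Jacobian of $\varphi^V$, and boundedness of $\mathrm{div}^vV$ on the compact tube closure gives $\rho_V(y,s)\le C\int_s^{s+1}\rho_V(y,s')\,ds'$; since $\int_0^\infty\rho_V(y,s)\,ds<\infty$ (finite tube mass), this forces $\rho_V(y,s)\to 0$ with an $L^1(\Gamma)$ dominating function, and dominated convergence finishes. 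But none of this is in your write-up.

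The paper avoids the flow-box analysis entirely by exploiting the identity $\mathrm{div}^{\hat v}(\hat V)=f\,\mathrm{div}^v(V)$, which you do not mention. Feeding this into the volume-flow formula and changing variables $r=r(x,t)$ (your $\tau$) gives
\[
\mathrm{vol}^{\hat v}(\hat K_t)\;\le\;C\int_{K_0}\exp\!\Bigl(\int_0^{r(x,t)}\mathrm{div}^v_{\varphi^V_r(x)}V\,dr\Bigr)\,v(x),
\]
an expression involving only $V$ and $v$. The paper then partitions $K_0$ into slabs $K_0^i=\{x:r(x,t)\in[r_0(t)+i\delta,\,r_0(t)+(i+1)\delta)\}$, recognizes the exponential on each slab as (up to a bounded factor) the $\varphi^V$-Jacobian, and uses a disjointness lemma --- after a further Egorov shrink, the images $\varphi^V_{r_0(t)+i\delta}(K_0^i)$ are pairwise disjoint --- to bound the sum by $\mathrm{vol}^v\bigl(\bigcup_{s\ge r_0(t)}\varphi^V_s(K_0)\bigr)$, which tends to $0$. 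This sidesteps any pointwise control of densities near $E$ and is the cleaner route.
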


To prove  (\ref{7aout3}), noting  that $\tilde{\lambda} = \frac{\alpha\, \xi}{\beta } \tilde{\omega}$ and $Z = \frac{\beta}{\alpha} \mathcal{Z}^{\ast}$,  we just need to apply Lemma \ref{LEMvolume} with $\mathcal{S}=\tilde{\Sigma}, v=  \frac{1}{\xi}\lambda,  V= Z ,f= \frac{\alpha}{\beta}, K_0=\tilde{S}_0$, and $K_{\infty}=\tilde{S}_{\infty}$.

\subsection{Proof of Lemma \ref{LEMstep2}} \label{ssecStep2}

We follow the notations of the previous subsection. First of all, by equation \eqref{EQZ3}, the pulled back vector field $\mathcal{Z}^{\ast}$ satisfies the the following equation on $\mathcal{W}\setminus E$:
\begin{equation}\label{10aout2}
\begin{aligned}
\mathcal{Z}^{\ast} = \tilde{\mathcal{Z}} + \tilde{h} \, \tilde{W}, \quad \text{ where }\quad & \tilde{\mathcal{Z}} := \alpha \big[ ({X^{\ast}} \cdot \tilde{h} )  {Y^{\ast}} - ({Y^{\ast}} \cdot \tilde{h}  ) {X^{\ast}}  \big],\\
& {\tilde{W}}:=({X^{\ast}}\cdot \alpha )  {Y^{\ast}} - ({Y^{\ast}} \cdot \alpha  ) {X^{\ast}},\\
& {X^{\ast}} := d\sigma^{-1}(X), \text{ and } {Y^{\ast}} := d\sigma^{-1}(Y)
\end{aligned}
\end{equation}
and $\tilde{\mathcal{Z}}$ is the vector-field which controls the dynamic over $\tilde{\Sigma}$ (since $\tilde{\Sigma} = (\tilde{h}=0)$). Moreover, note that  $\tilde{W}\cdot \tilde{h}=-\alpha^{-1} \, \left( \tilde{\mathcal{Z}} \cdot \alpha\right)$.  Recall that $\tilde{\lambda}$ is the volume form on $\tilde{\Sigma}$ given by $i_N\lambda$ and that $\tilde{\Sigma}=\{\tilde{h}=0\}$. Then by Proposition \ref{PROPdivcodim1}, we have
$$
div^{\tilde{\lambda}}(\tilde{\mathcal{Z}} ) =  div^{\lambda}(\tilde{\mathcal{Z}}) + \frac{\tilde{Z} \cdot \xi^2}{2 \,\xi^2} = div^{\lambda}(\tilde{\mathcal{Z}}) + \frac{\tilde{Z} \cdot \xi}{ \xi}.
$$
Moreover,
\begin{eqnarray*}
\mbox{div}^{\lambda}(\tilde{\mathcal{Z}} ) & = & \mbox{div}^{\lambda}(\mathcal{Z}^* ) -  \mbox{div}^{\lambda}  \left(   \tilde{h} \, \tilde{W} \right) \\
& = &\mbox{div}^{\lambda}(\mathcal{Z}^* ) - \tilde{W}\cdot \tilde{h}  - \tilde{h} \, \mbox{div}^{\lambda}  \left( \tilde{W} \right)\\
& = & \mbox{div}^{\lambda}(\mathcal{Z}^* ) + \frac{\tilde{\mathcal{Z}}\cdot \alpha}{\alpha}  - \tilde{h} \, \mbox{div}^{\lambda}  \left( \tilde{W} \right).
\end{eqnarray*}
Next, from the fact that $d\sigma^{\ast}\omega = \beta \lambda$ we conclude that:
\[
div^{\lambda}(\mathcal{Z}^{\ast}) = div^{\omega}(\mathcal{Z})^{\ast} - \frac{\mathcal{Z}^{\ast} \cdot \beta}{\beta} 
\]
In conclusion, since  $\tilde{\omega} = \frac{\beta}{\alpha \xi}\tilde{\lambda}$, we have on $\tilde{\Sigma}\setminus E$,
\begin{eqnarray*}
 div^{\tilde{\omega}}(\mathcal{Z}^{\ast}) = div^{\tilde{\omega}}(\tilde{\mathcal{Z}} ) &=  & div^{\tilde{\lambda}}(\tilde{\mathcal{Z}} ) +  \frac{\alpha\,  \xi}{\beta}\left(\tilde{\mathcal{Z}} \cdot \frac{\beta}{\alpha \, \xi} \right)\\
&  =  &div^{\omega}(\mathcal{Z})^{\ast} -  \frac{\mathcal{Z}^{\ast} \cdot \beta}{\beta} + \frac{\tilde{\mathcal{Z}}\cdot \alpha}{\alpha} + \frac{\tilde{Z} \cdot \xi}{\xi} +  \frac{\alpha\,  \xi}{\beta}\left(\tilde{\mathcal{Z}} \cdot \frac{\beta}{\alpha \, \xi} \right)
\end{eqnarray*}
which is equal to $div^{\omega}(\mathcal{Z})^{\ast}$. 
\subsection{Proof of Lemma \ref{LEMvolume}}\label{ssecStep4}

Let $\mathcal{W}, \mathcal{S}, v, V, f,, \hat{v}, \hat{V}, K_0, K_{\infty}$ be as in the statement of Lemma \ref{LEMvolume}. 
First of all, since by (P1) $K_0$ is a relatively compact set with positive measure in $\mathcal{S} \setminus E$, apart from shrinking $K_0$, we can suppose that $K_0$ is a compact subset of $\mathcal{S} \setminus E$ with positive measure ({\it i.e. $\mbox{vol}^{v}(K_0)>0$}). For every $x\in \mathcal{S} \setminus E$, denote by 
$r(x,\cdot)$ the solution to the Cauchy problem
\begin{equation}
\label{eq:r0}
\frac{\partial r}{\partial t} (x,t) = f\left(\varphi^{\hat{V}}_t(x)\right) \quad \text{ and } \quad r(x,0)= 0
\end{equation}
which is well-defined on a maximal subinterval $[0,T(x,t))$ of the set $[0,\infty)$. By assumption (P3), the function $r$ is well-defined on $K_0 \times [0,+\infty)$, smooth on its domain, and it satisfies $\varphi^{\hat{V}}_t(x) = \varphi_{r(x,t)}^V(x)$ for any $(x,t)$ in its domain. Let us consider the functions $r_0, d_0: [0,+\infty) \to [0,+\infty)$ given by
$$
r_0(t) := \inf \Bigl\{r(x,t) \, \vert \, \,x\in K_0 \Bigr\} \quad \mbox{and} \quad d_0(t) := \sup \Bigl\{ d\left(\varphi_t^V(x),K_{\infty}\right) \, \vert \, \,x\in K_0 \Bigr\} \qquad \forall t \geq 0,
$$
where $d$ is a complete geodesic distance on $\mathcal{W}$ and $d(\cdot,K_{\infty})$ denotes the distance function to the set $K_{\infty}$.

\begin{claim}
Apart from shrinking $K_0$, we may assume that 
$$
\lim_{t \to \infty} r_0(t) = +\infty \quad \mbox{and} \quad \lim_{t\rightarrow +\infty} d_0(t) = 0.
$$
Moreover, there is $\delta >0$ such that for every $r>0$ and every pair of integers $k,l\geq 0$ with $k\neq l$ the sets $\varphi_{r+k\delta}^V(K_0)$ and $  \varphi_{r+l\delta}^V(K_0)$ are disjoint.
\label{cl:1}
\end{claim}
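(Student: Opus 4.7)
My plan is to establish the three assertions in sequence, shrinking $K_0$ at most twice while preserving properties (P1)--(P5). The divergence $r_0(t) \to +\infty$ requires no reduction, while each of the distance convergence and the disjointness is secured by one Egorov-type sublevel-set argument. Throughout I use that $f > 0$ on $\mathcal{S}\setminus E$, that by (P3)--(P4) both the $V$- and $\hat{V}$-orbits starting in $K_0$ remain in $\mathcal{S}\setminus E$ (where $f$ is smooth), and that the identity $\varphi^{\hat{V}}_s(x) = \varphi^V_{r(x,s)}(x)$ together with the integral representation $r(x,t) = \int_0^t f(\varphi^{\hat{V}}_s(x))\,ds$ gives joint continuity of $r$ on $K_0 \times [0,\infty)$.

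For $r_0(t) \to +\infty$, I first show the pointwise statement $R(x) := \lim_{t \to \infty} r(x,t) = +\infty$ for every $x \in K_0$. Otherwise $\varphi^{\hat{V}}_t(x) = \varphi^V_{r(x,t)}(x)$ would converge to $y := \varphi^V_{R(x)}(x) \in \overline{K_{R(x)}} \subset \mathcal{S}\setminus E$ by (P4), so $f(y) > 0$ and the integrand $f(\varphi^{\hat{V}}_s(x))$ would stay bounded below by $f(y)/2$ for $s$ large, forcing $r(x,t) \to +\infty$: a contradiction. To promote pointwise convergence to uniform divergence of the infimum, I argue by contradiction: if $r(x_n, t_n) < M$ with $x_n \in K_0$ and $t_n \to +\infty$, extract $x_n \to x_\ast \in K_0$ by compactness, pick $T$ with $r(x_\ast, T) > M + 1$ using the pointwise result, and conclude from continuity of $r(\cdot, T)$ and monotonicity of $r(x_n, \cdot)$ in time that $r(x_n, t_n) \geq r(x_n, T) > M + 1$ for $n$ large.

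For $d_0(t) \to 0$, assumption (P5) combined with the relative compactness of the orbits (available in the application via the properness of $\sigma$ and the fact that $\varphi^V_t(x)$ accumulates only on the compact $K_\infty$) yields pointwise convergence $d(\varphi^V_t(x), K_\infty) \to 0$ for each $x \in K_0$. For each integer $m \geq 1$, I introduce the Borel sublevel sets
\[
A^m_T := \bigl\{ x \in K_0 \ : \ d(\varphi^V_t(x), K_\infty) < 1/m \ \mbox{for every rational } t \geq T \bigr\},
\]
each being a countable intersection of open sets by joint continuity of $\varphi^V$ and closedness of $K_\infty$. Since $K_0 = \bigcup_T A^m_T$, a diagonal selection $T_m$ with $\mbox{vol}^v(K_0 \setminus A^m_{T_m}) < 2^{-m-1}\,\mbox{vol}^v(K_0)$ followed by inner regularity yields a compact subset of positive measure (renaming it $K_0$) on which the convergence is uniform, giving $d_0(t) \to 0$.

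Finally, I reduce the disjointness to the existence of $\delta > 0$ with $\varphi^V_{k\delta}(K_0) \cap K_0 = \emptyset$ for every integer $k \geq 1$: if $\varphi^V_{r+k\delta}(x) = \varphi^V_{r+l\delta}(y)$ with $x,y \in K_0$ and $k > l$, the flow identity and the injectivity of $\varphi^V_{r+l\delta}$ produce $\varphi^V_{(k-l)\delta}(x) = y \in K_0$. To secure such $\delta$, with the convention $\sup \emptyset := 0$, introduce the last-return function
\[
T(x) := \sup \bigl\{ t > 0 \ : \ \varphi^V_t(x) \in K_0 \bigr\},
\]
which is Borel (from $\{T \leq s\} = \bigcap_{t > s,\, t \in \mathbb{Q}} \{\varphi^V_t(\cdot) \notin K_0\}$) and finite on $K_0$ because $K_0$ and $K_\infty$ are disjoint compact sets and $d(\varphi^V_t(x), K_\infty) \to 0$ by the previous step. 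Monotone convergence on $\{T \leq N\} \uparrow K_0$ together with inner regularity produces a compact subset $K_0^\ast \subset K_0$ of positive measure on which $T \leq N_0$ for some $N_0$; any $\delta > N_0$ then satisfies the required disjointness for $K_0^\ast$. I expect this last step to be the most delicate, since it is the only one where $K_0$ must be calibrated against the new parameter $\delta$; the earlier parts reduce to fairly standard Egorov-type selections combined with (P4)--(P5).
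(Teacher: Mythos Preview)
Your proof is correct but takes a different path from the paper's in two places. For $r_0(t)\to\infty$ you exploit the monotonicity of $r(x,\cdot)$ together with the continuity of $r(\cdot,T)$ on the compact $K_0$ to obtain uniform divergence directly (a Dini-type argument), whereas the paper invokes Egorov's theorem and shrinks $K_0$; your route is the cleaner one here. For the disjointness, however, you work harder than needed: once you have $d_0(t)\to 0$ uniformly, the paper simply observes that $\eta:=\min_{x\in K_0}d(x,K_\infty)>0$ (since $K_0$ and $K_\infty$ are disjoint compact sets) and picks $\delta$ with $d_0(t)<\eta$ for all $t\ge\delta$; then $\varphi^V_{(l-k)\delta}(y)=x\in K_0$ with $y\in K_0$ and $l>k$ would force $d_0((l-k)\delta)\ge d(x,K_\infty)\ge\eta$, a contradiction, and no further shrinking is required. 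In particular your last-return time $T(x)$ is already uniformly bounded by this $\delta$, so the third reduction is superfluous---which also sidesteps the small gap in your measurability argument (the identity $\{T\le s\}=\bigcap_{t\in\mathbb{Q},\,t>s}\{\varphi^V_t(\cdot)\notin K_0\}$ fails as stated because $K_0$ is closed rather than open).
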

\begin{proof}[Proof of Claim \ref{cl:1}]
Let us show that for every $x \in K_0$ we have $\lim_{t \to \infty} r(x,t) =\infty$. Let $x_0 \in K_0$ be fixed. Since $f$ is positive, the function $t \geq 0 \mapsto r_0(x_0,t)$ is increasing so its limit at infinity is either  $+\infty$ or a positive real number $\alpha$. If $\lim_{t \to +\infty} r(x_0,t) =\alpha$, then $\frac{\partial r}{\partial t} (x_0,t)$ tends to zero at infinity and consequently by (\ref{eq:r0}), $\lim_{t \to +\infty} f(\varphi^{\hat{V}}_t(x_0))  = \lim_{s \uparrow \alpha} f(\varphi_{s}^V (x_0)  ) =0$ which contradicts (P4) and the fact that $f$ is smooth and positive. In conclusion, $r(x,t) \xrightarrow{t \to \infty} \infty$ point-wise over $K_0$. Therefore, by Egorov's Theorem \cite[Theorem 4.17]{wz77}, there exists a compact subset of $K_0$ of positive measure where $r(x,t) \xrightarrow{t \to \infty} \infty$ uniformly. So, apart from shrinking $K_0$, we conclude that $\lim_{t \to \infty} r_0(t) = +\infty$. To get $\lim_{t\rightarrow +\infty} d_0(t) = 0$, we observe that by (P5) the function $ x \mapsto d(\varphi_t^V(x))$ converges point-wise to zero as $t$ tends to $+\infty$ and we apply Egorov's Theorem. It remains to prove the second part. Since $\lim_{t\rightarrow +\infty} d_0(t) = 0$ there is $\delta >0$ such that  $d_0(t)<d_0(0)$ for any $t\geq \delta$. Let $r>0$ and  $k,l\geq 0$ be a pair of integers with $k< l$. If $\varphi_{r+l \delta}^V(y) = \varphi_{r+ k\delta}^V(x)$ for some pair  $x,y \in K_0$, then $
\varphi_{(l-k) \delta}^V(y) =x$ belongs to $K_0$ which means that $d_0((l-k)\delta)\leq d_0(0)$, a contradiction.
\end{proof}

By (\ref{9aout1}), we have 
$$
\mbox{div}_x^{\hat{v}} \bigl(\hat{V}\bigr) =  f(x) \, \mbox{div}^{v}_x(V) \qquad \forall x \in \mathcal{S} \setminus E,
$$
which, by Proposition \ref{PROPintdiv} yields for every $t\geq 0$,
\begin{multline*}
 \mbox{vol}^{\hat{v}}  \bigl(  \hat{K}_t \bigr) =      \mbox{vol}^{\hat{v}}  \left( \varphi_t^{\hat{V}}(K_0) \right) = \int_{K_0} \exp \left(   \int_0^t \mbox{div}_{\varphi_s^{\hat{V}}(x)}^{\hat{v}} (\hat{V}) \, ds    \right) \hat{v}(x)\\
 = \int_{K_0} \frac{1}{f(x)} \, \exp\left(\int_{0}^t  f\left(\varphi^{\hat{V}}_s(x)\right)   \, \mbox{div}_{\varphi^{\hat{V}}_s(x)}^{v}(V)  \, ds \right)  \,   v(x).
\end{multline*}
Since $f$ is smooth and  positive  on the compact set $K_0$, we infer that  there is $C>0$ such that  for every $t\geq 0$,
$$
 \mbox{vol}^{\hat{v}}  \bigl(  \hat{K}_t \bigr)  \leq  C \, \int_{K_0}  \exp\left(\int_{0}^t  f\left(\varphi^{\hat{V}}_s(x)\right)   \, \mbox{div}_{\varphi^{\hat{V}}_s(x)}^{v}(V)  \, ds \right)  \,   v(x),
$$
which by doing the change of coordinates $r = r(x,s)$ for every $x$ yields 
\begin{eqnarray}\label{10aout1}
 \mbox{vol}^{\hat{v}}  \bigl(  \hat{K}_t \bigr)  \leq  C \, \int_{K_0}  \exp\left(\int_{0}^{r(x,t)}  \mbox{div}_{\varphi^{V}_r(x)}^{v}(V)  \, dr \right)  \,   v(x) \qquad \forall t \geq 0.
\end{eqnarray}
Let $t>0$ be fixed, set $r_i(t):=r_0(t)+i \delta$ for every positive integer $i$ and  define the partition $\{K_0^i\}_i$ of $K_0$ by
$$
K_0^i := \Bigl\{ x \in K_0 \, \vert \, r(x,t) \in  \left[r_i(t),r_{i+1}(t) \right)  \Bigr\} \qquad \forall i \in \N.
$$
Then (\ref{10aout1}) gives
\begin{eqnarray*}
 \mbox{vol}^{\hat{v}}  \bigl(  \hat{K}_t \bigr)  &\leq & C \, \sum_{i=0}^{+\infty} \int_{K_0^i}  \exp\left(\int_{0}^{r(x,t)}  \mbox{div}_{\varphi^{V}_r(x)}^{v}(V)  \, dr \right)  \,   v(x) \\
 & =  & C \, \sum_{i=0}^{+\infty} \int_{K_0^i}  \exp\left(\int_{0}^{r_i(t)}  \mbox{div}_{\varphi^{V}_r(x)}^{v}(V)  \, dr \right)    \exp\left(\int_{r_i(t)}^{r(x,t)}  \mbox{div}_{\varphi^{V}_r(x)}^{v}(V)  \, dr \right)  \,   v(x)\\
 & = & C e^{\delta C} \, \sum_{i=0}^{+\infty} \int_{K_0^i}  \exp\left(\int_{0}^{r_i(t)}  \mbox{div}_{\varphi^{V}_r(x)}^{v}(V)  \, dr \right)  \, v(x),
\end{eqnarray*}
where we used that taking $C$ larger if necessary we may assume that $  \mbox{div}_{\varphi^{V}_r(x)}^{v}(V)$ is bounded from above by $C$ (because $K_{\infty}$ is relatively compact and $\lim_{t\rightarrow +\infty} d_0(t) = 0$). Consequently, using again Proposition \ref{PROPintdiv} we can write
$$
 \mbox{vol}^{\hat{v}}  \bigl(  \hat{K}_t \bigr)  \leq C e^{\delta C} \, \sum_{i=0}^{+\infty}  \mbox{vol}^{v}\left(\varphi_{r_i(t)} (K_0^i)  \right).
$$
By construction, all the sets $\varphi_{r_i(t)} (K_0^i)$ are disjoint and contained in the set $\cup_{s\geq r_0(t)}  \varphi_{s} (K_0)$. In conclusion, we have for every $t>0$,
$$
 \mbox{vol}^{\hat{v}}  \bigl(  \hat{K}_t \bigr)  \leq C e^{\delta C} \,  \mbox{vol}^{v} \left(   \cup_{s\geq r_0(t)}  \varphi_{s} (K_0)  \right)
$$
which by (\ref{cl:1}) tends to zero as $t$ tends to $+\infty$.

\section{Resolution of singularities in manifolds with corners}\label{secRS}

In this section we introduce the main ideas and results about resolution of singularities which are necessary for this manuscript. Our goal is to prove an embedded resolution of singularities in the category of analytic manifold with corners (Theorem \ref{THMhironaka} below) and to use this result to prove Proposition \ref{PROPresolution} in subsection \ref{SSECresolution}. An important technical point is Lemma \ref{lem:Orientabi} below, which motivates the use of manifolds with corners.

Theorem \ref{THMhironaka} seems to be well known, but we have not found a precise reference. In what follows, we present a simple proof (for the special case of a hypersurface), based on the functoriality of the usual resolution of singularities (we follow \cite{bm08} -  see also \cite{kollarbook,w05} and the references therein). Our argument can be adapted to any other notion of embedded resolution of singularities (e.g. coherent ideal sheaves and marked ideals), provided that this notion is also functorial in respect to open immersions (which preserve the corners - see definition below).





\subsection{Real-analytic manifold with corners}\label{SSECmanifoldcorners}

\paragraph{Manifold with corners.} In this section, we combine the presentations given in \cite[Section 1]{km15} and \cite[Sections 2.1 and 2.2]{panazzolo06}. Set
\[
\mathbb{R}_{+}:= [0,\infty) \quad \text{ and }\quad \mathbb{N} = \{0,1,2, \ldots\}
\]
The model ($n$-dimensional) manifold with corners is a product
\[
\mathbb{R}^{n,k}:= \mathbb{R}_{+}^k \times \mathbb{R}^{n-k} 
\]
for $k \in \{0, \ldots, n\}$, on which the smooth functions, forming the ring $C^{\infty}(\mathbb{R}^{n,k}) =C^{\infty}(\mathbb{R}^n) \Big|_{\mathbb{R}^{n,k}}$ are taken to be those obtained by restriction from the smooth functions on $\mathbb{R}^n$. We will also consider the sheaf of rings $\mathcal{O}_{\mathbb{R}^{n,k}}$ of analytic germs over $\mathbb{R}^{n,k}$, which is the restriction of the analytic sheaf $\mathcal{O}_{\mathbb{R}^n}$ over $ \mathbb{R}^{n,k}$. 

An $n$-dimensional manifold with corners $M$ is a (paracompact, Hausdorff) topological manifold with boundary, with a ring of smooth functions $C^{\infty}(M)$ with respect to which it is everywhere locally diffeomorphic to one of these model spaces (that we call a local chart). We say that $M$ is analytic if there is also a coherent sheaf $\mathcal{O}_{M}$ of analytic functions with respect to which $M$ is everywhere locally bi-analytic to one of these models. If there is no risk of ambiguity, the stalk of $\mathcal{O}_M$ at a point $x\in M$ is denoted by $\mathcal{O}_x$ (instead of $\mathcal{O}_{M,x}$). A morphism $\Phi : M \to N$ between two manifolds with corners is said to be smooth (resp. analytic) if $\Phi$ is smooth (resp. analytic) in each local chart.


\paragraph{Boundary set of the manifold with corners.} Each point $x \in M$ necessarily has a well-defined (boundary) codimension given by the number of independent non-negative coordinate functions vanishing at $x$ in a local chart, which we denote by $b(x)$ (it is independent of the choice of local chart). A boundary face of codimension $1$ is the closure of one of the connected components of the set of points of codimension 1 ({\it i.e.} of $\{x \in M :b(x)= 1\}$); the set of such faces is denoted $\mathcal{D}$. In particular $\mathcal{D}$ consists of boundary hypersurfaces. We also require two extra conditions on the set $\mathcal{D}$:
\begin{itemize}
\item[(i)] All boundary hypersurfaces $D \in \mathcal{D}$ are embedded analytic varieties (not necessarily connected). In particular, the ideal of smooth functions vanishing on $D$, denoted by $\mathcal{I}_{D}^{\infty} \subset C^{\infty}(M)$, is principal (see \cite[Section 1]{km15}). Therefore, there exists a non-negative generator $\rho_D \in C^{\infty}(M)$ of this ideal, {\it i.e.} $\mathcal{I}_{D}^{\infty} = \rho_D \cdot C^{\infty}(M)$.

\item[(ii)] The set $\mathcal{D}$ is finite and ordered, that is, we can write $\mathcal{D} = \{D^{(1)}, \ldots, D^{(l)}\}$, where $l \in \mathbb{N}$ and each $D^{(i)}$ is an embedded analytic subvariety of $\mathcal{D}$. 
\end{itemize}

From now on, we will denote by $(M,\mathcal{D})$ a manifold with corner.

\begin{remark}
We impose condition $(ii)$ (which is always trivially verified on any relatively compact subset of $M$) because the border of the manifold with corner will play a similar role to the exceptional divisors in resolution of singularities. We stress that this hypothesis could be relaxed, but it simplifies the presentation and it is trivially satisfied in our context.
\end{remark}

\paragraph{Open immersions compatible with corners.} We say that an analytic morphism $\phi: (M,\mathcal{D}) \to (N,\mathcal{E})$ is an open immersion compatible with corners (c.f \cite[p. 656, definition of b-map]{km15}), if $\phi$ is a local immersion with respect to each local model of $M$ and $N$ (in particular, $\phi$ can be locally extended to a neighborhood of the origin of $\mathbb{R}^{n,k}$ in each local model) and $\phi$ is compatible with the order of the boundaries, that is, $\phi(D^{(i)}) \subset E^{(j(i))}$ for some $j(i)$, and given $i_1<i_2$, we have that $j(i_1) < j(i_2)$.

\paragraph{Ideal sheaves over a manifold with corners.} Let $\mathcal{I}$ be a coherent ideal sheaf of $\mathcal{O}_M$. Given a point $x \in M$ and an ideal $J \subset \mathcal{O}_x$, the order of $\mathcal{I}$ in respect $J$ is:
\[
\mu_J(\mathcal{I}) := \max\{ t\in \mathbb{N}; \,\mathcal{I} \cdot \mathcal{O}_x \subset J^t   \}
\]
In particular, the order of $\mathcal{I}$ at $x$ is given by $\mu_x(\mathcal{I}):=\mu_{m_x}(\mathcal{I})$, where $m_x$ denotes the maximal ideal of the local ring $\mathcal{O}_x$. The support of $\mathcal{I}$, which we denote by $Supp(\mathcal{I})$, is the set $\{ x \in M; \,\mu_x(\mathcal{I}) \geq 1 \}$. Given a connected sub-manifold $\mathcal{C}$, the order $\mu_{\mathcal{C}} (\mathcal{I})$ denotes the generic order of $\mathcal{I}$ in respect to the reduced ideal sheaf $\mathcal{I}_{\mathcal{C}}$ whose support is $\mathcal{C}$, that is, $\mu_{\mathcal{C}} (\mathcal{I}):= min\{ \mu_{\mathcal{I}_{\mathcal{C}}}(\mathcal{I}_x) ; \, \forall x\in \mathcal{C}  \}$.

Under the hypothesis that $\mathcal{I}$ is a principal ideal sheaf, we define the singular set of $\mathcal{I}$, which we denote by $Sing(\mathcal{I})$, as the set of points of order at least two, that is $Sing(\mathcal{I}) = \{ x \in M; \,\mu_x(\mathcal{I}) \geq 2 \}$.


\subsection{Real-blowings-up}

We follow here \cite[Sections 2.3, 2.4 and 2.5]{panazzolo06}.

\paragraph{Real blowing-up of $\mathbb{R}^n$.} We start by defining real-blowings-up over local charts outside of the corners. Consider a coordinate system $(u_1, \ldots, u_n)$ of $\mathbb{R}^n$. A real blowing-up of $\mathbb{R}^n$ with center $\mathcal{C} = (u_1,\dots,u_t)$ ({\it i.e.} $\mathcal{C}$ is the support of the ideal $(u_1,\dots,u_t)$), is the real-analytic surjective map
\[
\begin{matrix}
\sigma :\, \mathbb{S}^{t-1} \times \mathbb{R}_{+} \times \mathbb{R}^{n-t} &\longrightarrow &\mathbb{R}^n\\
 (\bar{x},\tau,x) & \longmapsto & (\tau \cdot \bar{x}, x)
\end{matrix}
\]
where $\mathbb{S}^{t-1}:=\{\bar{x}\in \mathbb{R}^t; \, \|\bar{x}\|=1\}$ and $x= (x_{t+1}, \ldots, x_{n})$. The set $\widetilde{M}:=\mathbb{S}^{t-1} \times \mathbb{R}_{+} \times \mathbb{R}^{n-t} $ will be called the blowed-up space, $\mathcal{C}$ will be called the center of blowing-up, and $E:= \sigma^{-1}(\mathcal{C}) = \mathbb{S}^{t-1} \times \{0\} \times \mathbb{R}^{n-t} $ will be called the exceptional divisor of $\sigma$. Note that $\sigma$ is a bi-analytic morphism from $\widetilde{M}\setminus E$ to $\mathbb{R}^n \setminus \mathcal{C}$.  

\paragraph{Directional charts.} In the notation of the previous paragraph, there are $2t$ local charts covering $\widetilde{M} = \mathbb{S}^{t-1} \times \mathbb{R}_{+} \times \mathbb{R}^{n-t}$. Indeed, for each index $j \in \{1, \ldots, t\}$, we consider the $u_j^{+}$-chart and the $u^{-}_j$-chart, whose expressions are given directly in terms of the application $\sigma$:
$$
\sigma_{j}^{+}: \,\mathbb{R}^{j-1} \times \mathbb{R}_{+} \times \mathbb{R}^{n-j} \to \mathbb{R}^n \cap \{u_j\geq 0\}  \mbox{ and } \sigma_{j}^{-}: \,\mathbb{R}^{j-1} \times \mathbb{R}_{+} \times \mathbb{R}^{n-j} \to \mathbb{R}^n \cap \{u_j\leq 0\},
$$
which are defined as follows:
\[
\sigma^{\epsilon}_j : \left\{
\begin{aligned}
u_i &= x_i \cdot x_j \quad \text{ if } i=1, \ldots, j-1, j+1, \ldots, t\\
u_j &= \epsilon  x_j \phantom{ \cdot x_j} \\
u_i &= x_i \phantom{\cdot x_j}\,\,\, \quad \text{ if } i=t+1, \ldots, n,
\end{aligned}\right.
\]
where $\epsilon \in\{+,-\}$. Note that there exist bi-analytic applications \cite[Proposition 2.11]{panazzolo06}:
\[
\begin{aligned}
\tau_{j}^{+}:& \,\mathbb{R}^{j-1} \times \mathbb{R}_{+} \times \mathbb{R}^{n-j} \to 
\left(\mathbb{S}^{t-1} \cap \{\bar{x}_j >0\}  \right) \times \mathbb{R}_{+} \times \mathbb{R}^{n-t} \\
\text{and} \quad \tau_{j}^{-}:& \,\mathbb{R}^{j-1} \times \mathbb{R}_{+} \times \mathbb{R}^{n-j} \to \left(\mathbb{S}^{t-1} \cap \{\bar{x}_j <0\}  \right) \times \mathbb{R}_{+} \times \mathbb{R}^{n-t}
\end{aligned}
\]
such that $\sigma_{j}^{+} = \sigma \circ \tau_{j}^{+}$ and $\sigma_{j}^{-} = \sigma \circ \tau_{j}^{-}$ (this is why we abuse notation and call $\sigma_{j}^{\pm}$  a chart). In particular, note that $\widetilde{M}= \mathbb{S}^{t-1} \times \mathbb{R}_{+} \times \mathbb{R}^{n-t}$ is a manifold with corner and $E=\mathbb{S}^{t-1} \times \{0\} \times \mathbb{R}^{n-t}$ is its border, which we call exceptional divisor.

\paragraph{Blowing-up in general manifolds with corners.} First of all, the definition of real blowing-up can be easily extended to the local models $\mathbb{R}^{n,k}$, as long as the center of blowing-up is admissible to the border: we say that $\mathcal{C}$ is an admissible center (and that the blowing-up $\sigma$ is admissible) in respect to $\mathbb{R}^{n,k}$ if $\mathcal{C}= (u_{i_1}, \ldots, u_{i_t})$, for some sublist of indexes $[i_1, \ldots, i_t]\subset [1, \ldots, n]$. In other words, $\mathcal{C}$ has normal crossings in respect to the border $\mathcal{D}$. In this case, an admissible real blowing-up is a proper analytic map between manifolds with corners $\sigma: \widetilde{M} \to \mathbb{R}^{n,k}$ and all of the notions given before (including the directional charts) can be adapted, {\it mutatis mutandis}. It is now clear that we can extend these definitions to any general manifold with corners (provided that $\mathcal{C}$ is adapted in respect to $(M,\mathcal{D})$, {\it i.e.} $\mathcal{C}$ is admissible in every local chart). See details in \cite[Proposition 2.12]{panazzolo06}.

Given an admissible real-blowing-up $\sigma: (\widetilde{M},\widetilde{\mathcal{D}})\to (M,\mathcal{D})$, the set of divisors $\widetilde{\mathcal{D}}$ over $\widetilde{M}$ is given by the strict transforms of $D \in \mathcal{D}$ ({\it i.e.} by the sets $\overline{\sigma^{-1}\left(D \setminus \mathcal{C} \right)}$) and the set $E$, where $E$ is the extra border produced by $\sigma$ (which enters in the end of the list, as the $l+1$ term). Finally, given a coherent ideal sheaf $\mathcal{I}$ over $M$, we define the (total) transform of $\mathcal{I}$ as the ideal sheaf $\mathcal{I}^{\ast}:=\mathcal{I} \cdot \mathcal{O}_{\widetilde{M}}$, {\it i.e.} the ideal sheaf which is locally generated by all functions $f\circ \sigma$, where $f\in \mathcal{I} \cdot \mathcal{O}_{M,x}$ for some point $x \in M$.

\begin{lemma}
Consider an admissible real blowing-up $\sigma: (\widetilde{M},\widetilde{\mathcal{D}}) \to (M,\mathcal{D})$ with exceptional divisor $E$. There exists a non-negavite smooth function $\rho_E: \widetilde{M} \to \mathbb{R}$ whose support is contained in $E$ (that is, if $x\notin E$, then $\rho_E(x) > 0$). Furthermore, at each point $x\in E$ there exists a coordinate system $(x_1, \ldots, x_n)$ centered at $x$ where $E = (x_1=0)$, and a locally defined smooth unit $\xi$ (that is $\xi(x) \neq 0$) such that $\rho_E = x_1 \cdot \xi$.
\label{lem:rho}
\end{lemma}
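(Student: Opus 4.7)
My plan is to prove the lemma in three steps: a local description of $E$ in the directional charts, a global partition-of-unity construction of $\rho_E$, and a Hadamard-division argument giving the local product form. Conceptually, the lemma is asserting that the exceptional divisor $E$ fits into property (i) of Subsection \ref{SSECmanifoldcorners} as a legitimate boundary hypersurface of $\widetilde{M}$, and exhibits an explicit non-negative generator of its ideal.

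First, I would work locally. In each directional chart $\sigma_j^{\epsilon}$ with coordinates $(x_1,\ldots,x_n) \in \mathbb{R}^{j-1} \times \mathbb{R}_+ \times \mathbb{R}^{n-j}$, the preimage $\sigma^{-1}(\mathcal{C})$ reduces to the coordinate hyperplane $\{x_j = 0\}$, and $x_j \geq 0$ is a natural analytic defining function vanishing to first order on $E$. Admissibility of the center $\mathcal{C}$ with respect to $(M,\mathcal{D})$ ensures that this description is compatible with the pre-existing boundary hypersurfaces, so $E$ is a well-defined embedded analytic hypersurface of $\widetilde{M}$ that can legitimately be adjoined to $\widetilde{\mathcal{D}}$.

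Second, I would globalize by gluing. Cover $\widetilde{M}$ by a locally finite family $\{U_\alpha\}$ of open sets, each either disjoint from $E$ or contained in some directional chart; set $\rho_E^{(\alpha)} \equiv 1$ in the first case and $\rho_E^{(\alpha)} := x_{j(\alpha)}$ in the second. For a subordinate smooth partition of unity $\{\phi_\alpha\}$, define
\[
\rho_E := \sum_\alpha \phi_\alpha \, \rho_E^{(\alpha)}.
\]
This is smooth and non-negative. For $x \notin E$ at least one $\phi_\alpha(x) > 0$, and the corresponding $\rho_E^{(\alpha)}(x) > 0$ (trivially off $E$, or because $x_{j(\alpha)}(x) > 0$ in a directional chart), so $\rho_E(x) > 0$. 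For $x \in E$, only indices $\alpha$ with $x \in U_\alpha$ contribute; such $U_\alpha$ must intersect $E$, hence they are directional charts, and $\rho_E^{(\alpha)}(x) = x_{j(\alpha)}(x) = 0$, giving $\rho_E(x)=0$.

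Third, I would extract the product form. Fix $x \in E$ and pick coordinates $(x_1,\ldots,x_n)$ centered at $x$ in which $E = \{x_1 = 0\}$ and $x_1 \geq 0$. For each of the finitely many $\alpha$ with $\phi_\alpha$ nonzero near $x$, the local function $\rho_E^{(\alpha)}$ is smooth, non-negative, and vanishes to first order on $\{x_1=0\}$. By Hadamard's lemma applied to the half-space coordinate $x_1$, there exists a smooth strictly positive unit $u_\alpha$ with $\rho_E^{(\alpha)} = x_1 \, u_\alpha$ in a neighborhood of $x$. Therefore
\[
\rho_E = x_1 \cdot \xi, \qquad \xi := \sum_\alpha \phi_\alpha u_\alpha,
\]
with $\xi$ smooth and strictly positive at $x$, as required. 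The only substantive technical input is the $C^\infty$ Hadamard division by a boundary coordinate, where positivity of the quotient is forced by the sign assumption and first-order vanishing; everything else is partition-of-unity bookkeeping, and I do not expect any serious obstacle beyond stating this division lemma carefully for manifolds with corners.
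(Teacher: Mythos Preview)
Your proof is correct, but it takes a more explicit and constructive route than the paper's. The paper simply invokes the structural fact (property (i) of Subsection~\ref{SSECmanifoldcorners}, with a reference to \cite{km15}) that an embedded boundary hypersurface has a principal smooth ideal $\mathcal{I}_E^\infty$, takes any generator $\rho_E$, notes it can be chosen non-negative since its zero set lies in the boundary, and then applies the Malgrange division theorem to the local boundary coordinate $x_1$ to obtain $\rho_E = x_1\cdot\xi$ with $\xi$ a unit. Your argument instead \emph{builds} $\rho_E$ by hand from the local coordinates $x_{j(\alpha)}$ via a partition of unity, and then recovers the product form through Hadamard's lemma chart by chart. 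The paper's approach is shorter and stays at the level of ideals; yours is more self-contained, since it effectively verifies that $E$ satisfies property (i) rather than quoting it, and it uses only the elementary Hadamard division rather than Malgrange. Either way the substance is the same: both $x_1$ and the local defining functions are first-order vanishing non-negative functions on the same hypersurface, hence differ by a smooth positive unit.
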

\begin{proof}[Proof of Lemma \ref{lem:rho}]
Note that $E$ is an embedded variety, so there exists a smooth function $\rho_E : \widetilde{M} \to \R$ which generates the smooth ideal $\mathcal{I}^{\infty}_{E}$. This means that $E = (\rho_E=0)$ and that $\rho_E$ divides any smooth function whose support contains $E$.

Since the zero locus of $\rho_E$ is in the border, we can suppose that $\rho_E$ is strictly positive outside of $E$. Next, given a point $x\in E$ and a coordinate system $(x_1, \ldots, x_n)$ centered at $x$ and defined in a neighborhood $\mathcal{V}$ where $E\cap \mathcal{V} = (x_1=0)$, we consider the locally defined smooth function $f=x_1$. Since $\rho_E$ divides $f$, by Malgrange division Theorem, there exists a function $\xi(x)$ such that $\rho_E  = x_1 \cdot\xi $. Finally, since $f=x_1$ is a local generator of the ideal $\mathcal{I}^{\infty}_{E}$, we conclude that $\xi$ is a unit. 
\end{proof}

\begin{lemma}[Preserving orientability]
Let $(M,\mathcal{D})$ be an orientable manifold with corners and $\omega$ be a fixed $C^{\infty}$ volume form over $M$ which defines the orientation of $M$. Consider an admissible real blowing-up $\sigma: (\widetilde{M},\widetilde{\mathcal{D}}) \to (M,\mathcal{D})$ with center $\mathcal{C}$ and denote by $E$ its exceptional divisor. Then $(\widetilde{M},\widetilde{\mathcal{D}})$ is also an orientable manifold with corners and the differential form:
\[
\widetilde{\omega}:= \rho_E^{-\beta} \,d\sigma^{\ast}(\omega)
\]
is a volume form over $\widetilde{M}$, where $\beta = codim(\mathcal{C})-1$ and $\rho_E$ is the function defined in Lemma \ref{lem:rho}.

\label{lem:Orientabi}
\end{lemma}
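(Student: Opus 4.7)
The plan is to reduce everything to a direct local computation in the directional charts of $\sigma$. Since $\sigma$ is a diffeomorphism on $\widetilde{M}\setminus E$ and $\omega$ is a volume form on $M$, the pull-back $d\sigma^{\ast}(\omega)$ is automatically a volume form on $\widetilde{M}\setminus E$, and the factor $\rho_E^{-\beta}$ is smooth and strictly positive there by Lemma \ref{lem:rho}. So $\widetilde{\omega}$ is a smooth nowhere-vanishing top form on $\widetilde{M}\setminus E$. The content of the lemma is therefore to show that $\widetilde{\omega}$ extends smoothly across $E$ as a nowhere-vanishing top form. This is a purely local statement around a point $\widetilde{x}\in E$, and orientability of $\widetilde{M}$ will follow at once from the existence of such a global nowhere-vanishing top form.

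The core step is the Jacobian computation in a directional chart. Pick $\widetilde{x}\in E$, a local chart of $M$ near $\sigma(\widetilde{x})$ adapted to the center $\mathcal{C}=(u_1,\dots,u_t)$ (possibly also to the corner structure, in which case $\mathcal{C}$ is admissible by hypothesis), and a directional chart $\sigma_{j}^{\epsilon}$ around $\widetilde{x}$, as defined in Section \ref{SSECmanifoldcorners}. In this chart one has $du_i = x_j\,dx_i + x_i\,dx_j$ for $i\leq t$ with $i\neq j$, $du_j = \epsilon\,dx_j$, and $du_i=dx_i$ for $i>t$. Writing $\omega = f\,du_1\wedge\cdots\wedge du_n$ for some smooth strictly positive (or strictly negative, once the orientation is fixed) function $f$, the terms $x_i\,dx_j$ are killed after wedging with $\epsilon\,dx_j$, and we obtain
\[
d\sigma_{j}^{\epsilon,\ast}(\omega) \;=\; \delta\,\epsilon\,(f\circ \sigma_{j}^{\epsilon})\,x_j^{\,t-1}\,dx_1\wedge\cdots\wedge dx_n,
\]
with $\delta\in\{+1,-1\}$ a fixed sign depending only on $j$ coming from reordering the factors.

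Now I would apply Lemma \ref{lem:rho}: in this chart $E=\{x_j=0\}$, and there is a smooth unit $\xi$ such that $\rho_E = x_j\,\xi$. Since $x_j\geq 0$ and $\rho_E\geq 0$ with $\rho_E>0$ off $E$, the unit $\xi$ has constant sign, which we may take positive. With $\beta = t-1 = \mathrm{codim}(\mathcal{C})-1$, one computes
\[
\widetilde{\omega}\;=\;\rho_E^{-\beta}\,d\sigma^{\ast}(\omega)\;=\;\delta\,\epsilon\,(f\circ\sigma_{j}^{\epsilon})\,\xi^{-(t-1)}\,dx_1\wedge\cdots\wedge dx_n,
\]
which is smooth on the whole chart and nowhere vanishing (the coefficient never vanishes because $f$, $\xi$ are nowhere zero). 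The same identity holds in the remaining directional charts, so $\widetilde{\omega}$ is smooth and nowhere vanishing in a neighborhood of every point of $E$, hence on all of $\widetilde{M}$. This simultaneously proves that $\widetilde{M}$ is orientable and that $\widetilde{\omega}$ is a volume form.

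The only place where care is required, and what I regard as the main obstacle, is the compatibility of signs: one must make sure that the local expressions obtained in distinct directional charts piece together to define a single global nowhere-vanishing top form rather than a form that changes sign on chart overlaps. The clean way to handle this is to observe that on $\widetilde{M}\setminus E$ there is \emph{nothing} to check, because there $\widetilde{\omega}= \rho_E^{-\beta}\,d\sigma^{\ast}(\omega)$ is intrinsically defined and coincides with a positive multiple of the pull-back of the fixed orientation on $M$; hence the signs $\delta\,\epsilon$ produced by the local computation are automatically consistent on the open dense set $\widetilde{M}\setminus E$, and by continuity they remain consistent on $E$. In other words, the admissible blowing-up respects the orientation inherited from $M$ precisely because we use the oriented model $\mathbb{S}^{t-1}\times\mathbb{R}_+$ for the blow-up, and the factor $\rho_E^{-\beta}$ is exactly tuned to cancel the vanishing order $t-1$ of the Jacobian along $E$.
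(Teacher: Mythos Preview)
Your proof is correct and follows essentially the same approach as the paper: a local Jacobian computation in a directional chart showing that $d\sigma^{\ast}(\omega)$ vanishes to order exactly $t-1$ along $E$, combined with Lemma~\ref{lem:rho} to conclude that $\rho_E^{-(t-1)}d\sigma^{\ast}(\omega)$ is smooth and nowhere vanishing. The paper's proof is terser---it treats only the $u_1^{+}$-chart and does not discuss the sign issue---whereas you are more explicit about the factors $\delta\epsilon$ and why the local expressions glue, via the density argument on $\widetilde{M}\setminus E$; this extra care is warranted but does not constitute a different method.
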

\begin{proof}[Proof of Lemma \ref{lem:Orientabi}]
It is clear that it is enough to prove the result locally over the pre-image of $\mathcal{C}$ (since the construction of $\widetilde{\omega}$ is global, $\sigma$ is an isomorphism outside of $\mathcal{C}$ and $\rho_E$ is strictly positive outside of $E$). So, let $a\in \mathcal{C}$ and $b \in \sigma^{-1}(a)$ be fixed and consider an analytic local chart $u= (u_1, \ldots, u_n)$ over $a$ where $\mathcal{C} = (u_1, \ldots, u_r)$ and $b$ is the origin of the $u_1^{+}$-chart
\[
u_1 =  x_1, \quad u_i= x_i \cdot x_1 \text{ if }i=2, \ldots, t, \quad u_i=x_i \text{ if } i=t+1, \ldots, n
\]
In this coordinate system, $\omega = \zeta \cdot du_1 \wedge \ldots \wedge du_n$, where $\zeta$ is a $C^{\infty}$ unit ({\it i.e.} $\zeta(a)\neq 0$). Now, the pull-back of $\omega$ under these local expressions is given by:
\[
d\sigma^{\ast}(\omega)  = \zeta \circ \sigma \cdot x_1^{\beta} \, dx_1 \wedge \ldots \wedge dx_n,
\]
where $\beta = t-1 = codim(\mathcal{C})-1$. Since $\rho_E$ is locally equal to $x_1 \xi$, where $\xi$ is a $C^{\infty}$ locally defined positive unit, we conclude that:
\[
\widetilde{\omega} = \zeta \circ \sigma \cdot \xi^{-\beta} dx_1 \wedge \ldots \wedge dx_n
\]
and $\zeta \circ \sigma \cdot \xi^{-\beta} (b) \neq 0$. Therefore, $\widetilde{\omega}$ is a locally defined volume form. We can easily conclude.
\end{proof}

\subsection{Resolution of singularities of a hypersurface in a manifold with corners}

Let $\sigma: (\widetilde{M},\widetilde{D}) \to (M,D)$ be a real-blowing-up with center $\mathcal{C}$. Given a coherent and principal ideal sheaf $\mathcal{I}$, set $\alpha:= \mu_{\mathcal{C}}(\mathcal{I})$. There exists a coherent ideal sheaf $\mathcal{I}^{st}$, called the strict transform of $\mathcal{I}$, and (if $\alpha\geq 1$) there exists a coherent ideal sheaf $\mathcal{I}'$, called the weighted transform (with weight 1) of $\mathcal{I}$, such that
\[
\mathcal{I}^{\ast}=:  \mathcal{I}_E^{\alpha} \, \mathcal{I}^{st} \quad \text{ and } \quad \mathcal{I}^{\ast}=:  \mathcal{I}_E \, \mathcal{I}' (\text{ if }\alpha\geq 1)
\]
where $\mathcal{I}_E$ is the reduced ideal sheaf whose support is the exceptional divisor $E$. A sequence of strict (resp. weighted) admissible real-blowings-up is given by
\[
(M_r,\mathcal{I}_r,D_r) \xrightarrow{\sigma_r} \ldots  \xrightarrow{\sigma_2} (M_1,\mathcal{I}_1,D_1)\xrightarrow{\sigma_1} (M_0,\mathcal{I}_0,D_0)
\]
where each successive real-blowing-up $\sigma_{i+1}: (M_{i+1},D_{i+1} )\to (M_i,D_i)$ is admissible and the ideal sheaf $\mathcal{I}_{i+1}$ denotes the strict (resp. weighted) transform of $\mathcal{I}_i$. We first prove a preliminary result about weighted sequence of blowings-up:

\begin{proposition}
Let $(M,\mathcal{D})$ be an analytic real-manifold with corners and consider a coherent and principal ideal sheaf $\mathcal{I}$ over $M$. For every relatively compact open set $M_0 \subset M$, there exists a sequence of weighted admissible real-blowings-up with smooth centers
\begin{equation}
(M_r,\mathcal{I}_r,D_r) \xrightarrow{\sigma_r} \ldots  \xrightarrow{\sigma_2} (M_1,\mathcal{I}_1,D_1)\xrightarrow{\sigma_1} (M_0,\mathcal{I}_0,D_0)
\label{eq:res}
\end{equation}
(where $D_0 = \mathcal{D} \cap M_0$, $\mathcal{I}_0 = \mathcal{I} \cdot \mathcal{O}_{M_0}$ and $\mathcal{I}_{i+1}$ is the weighted transform of $\mathcal{I}_i$) such that $\mathcal{I}_r$ is a principal and reduced ideal sheaf whose support is a non-singular submanifold (with normal crossings with $D_r$), the morphism $\sigma = \sigma_1 \circ \ldots \circ \sigma_r$ restricted to $\sigma^{-1}(M_0 \setminus Sing(\mathcal{I}_0))$ is an isomorphism and $\sigma(E_r) = Sing(\mathcal{I}_0)$. Furthemore, a resolution sequence \ref{eq:res} can be associated to every ideal $\mathcal{I}$ in a way that is functorial with respect to open immersions compatible with corners.
\label{THMhironakaprel}
\end{proposition}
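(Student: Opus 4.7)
The plan is to derive Proposition \ref{THMhironakaprel} from Bierstone and Milman's functorial embedded resolution of singularities for coherent principal ideal sheaves on analytic manifolds, as developed in \cite{bm97,bm08}, and then to extend it to the corner setting by a gluing argument based on functoriality with respect to open immersions. Since $M_0$ is relatively compact and the set $\mathcal{D}$ is finite and ordered, we can cover $M_0$ by finitely many local charts, each bi-analytic to an open subset of a local model $\R^{n,k}$. Each such chart $U$ is naturally realised as an open subset of an ambient analytic manifold without corners $\widetilde{U} \subset \R^n$, on which the boundary hypersurfaces $D^{(1)} \cap U, \ldots, D^{(l)} \cap U$ extend to smooth analytic hypersurfaces with normal crossings, and on which $\mathcal{I}\big|_{U}$ extends to a coherent principal ideal sheaf.

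Next, on each ambient chart $\widetilde{U}$ I would run the Bierstone-Milman algorithm applied to the pair consisting of the extended ideal and the finite labelled collection of extended boundary hypersurfaces, treating these hypersurfaces as an initial ``history'' of exceptional divisors. The algorithm produces a finite sequence of blowings-up with smooth admissible centers (having normal crossings with the current divisor, hence in particular with $\mathcal{D}$) that terminates when the transform of $\mathcal{I}$ has become principal and reduced, with smooth support in normal crossings position with the final divisor. Functoriality of the algorithm with respect to analytic open immersions ensures that, for any two overlapping charts $U_1$ and $U_2$, the sequences produced on $\widetilde{U}_1$ and $\widetilde{U}_2$ agree over $U_1 \cap U_2$, so the local sequences glue into a global sequence on $M_0$. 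The centers of the algorithm always lie inside $\mbox{Sing}(\mathcal{I}_i)$, so $\sigma$ is an isomorphism over $M_0 \setminus \mbox{Sing}(\mathcal{I}_0)$ and $\sigma(E_r) = \mbox{Sing}(\mathcal{I}_0)$. The same functoriality directly yields the statement about functoriality with respect to open immersions compatible with corners, since such immersions preserve both the germ of the ideal and the labelled ordering of the boundary components.

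The remaining issue is the bookkeeping between strict and weighted transforms. For a principal ideal $(f)$ of order $\alpha \geq 1$ along a smooth admissible center $\mathcal{C}$, writing $f\circ\sigma = \rho_E^{\alpha}\, \tilde f$ gives strict transform $(\tilde f)$ and weighted transform $(\rho_E^{\alpha-1}\, \tilde f)$; both are principal, differing only by a power of the local generator of the exceptional divisor, so the weighted transform variant of Bierstone-Milman (as in the marked ideal framework of W\l{}odarczyk \cite{w05} and Koll\'ar \cite{kollarbook}) terminates at the same stage with a principal and reduced $\mathcal{I}_r$ whose support is a smooth submanifold transverse to $D_r$. The main obstacle I anticipate is the careful verification that Bierstone-Milman's functoriality really transports to our notion of ``open immersions compatible with corners'', in particular that the algorithm's choices are insensitive to the extension of a corner chart to an ambient manifold without corners and that the ordering of the components of $\mathcal{D}$ is respected throughout. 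This should reduce to the observation that every decision made by the algorithm depends only on local invariants (orders of vanishing and residual orders along labelled components of the current divisor) which are preserved by our class of immersions, but making this rigorous requires a careful review of the precise conventions in \cite{bm97,bm08}.
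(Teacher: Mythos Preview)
Your approach is essentially the same as the paper's: reduce to a local model, extend to an ambient analytic manifold without corners, apply Bierstone--Milman's functorial resolution to the marked ideal with weight $1$, and invoke functoriality to glue. However, you gloss over the one genuinely technical step that the paper carries out explicitly.

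The issue is that running Bierstone--Milman on the ambient chart $\widetilde{U} \subset \mathbb{R}^n$ produces a sequence of \emph{polar} (ordinary) blowings-up of manifolds without corners, whereas the proposition asserts the existence of a sequence of \emph{real} blowings-up of manifolds with corners. These have different total spaces: the real blowing-up replaces the center by a sphere bundle, the polar one by a projective bundle. Knowing that the first BM center is admissible lets you define the first real blowing-up $\sigma_1$ with the same center, but after that you are on a new manifold with corners $M_1$ that is \emph{not} simply the preimage of $U$ inside $\widetilde{M}_1$. You must explain why the subsequent BM centers, which live on the polar side, make sense and remain admissible on the real side, and why the whole sequence lifts. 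The paper handles this by an explicit induction on the length $r$ of the BM sequence: each directional chart $\sigma_1^{j,\pm}$ of the real blowing-up embeds as an open subset of a chart of the corresponding polar blowing-up $\tau_1$, so by functoriality the remaining (shorter) BM sequence restricts to each directional chart, and these restrictions glue to a sequence of real blowings-up over $M_1$. Your write-up would be complete once this polar-to-real translation is made explicit; this, rather than the compatibility of BM's local invariants with corner-preserving immersions, is the substantive point.

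A minor remark: your final paragraph on strict versus weighted transforms is largely unnecessary here. The proposition is stated for weighted transforms, and applying \cite[Theorem 1.3]{bm08} to the marked ideal $(\widetilde{M}_0,\widetilde{M}_0,\widetilde{\mathcal{I}}_0,\widetilde{D}_0,1)$ is already the weighted-transform setting, so no translation is required. The observation that the BM invariant is minimal at smooth points of the support (so that only the last blow-up touches them) is what gives $\sigma(E_r)=\mathrm{Sing}(\mathcal{I}_0)$ and the isomorphism statement.
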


\begin{proof}[Proof of Proposition \ref{THMhironakaprel}]
Because of the functorial property, it is enough to assume that $M_0$ is a relatively compact open set of one of the local models $\mathbb{R}^{n,k}$ of $M$ (see e.g. \cite[Claim 5.1]{bm08}). By the definition of an analytic manifold with corners, there exists a relatively compact open set $\widetilde{M}_0$ of $\mathbb{R}^n$, which contains $M_0$ and where the ideal $\mathcal{I}_0$ admits an analytic extension $\widetilde{\mathcal{I}}_0$. We consider the triple $(\widetilde{M}_0,\widetilde{\mathcal{I}}_0,\widetilde{D}_0)$, where $\widetilde{D}_0 = D_0 = \{D^{(1)}, \ldots, D^{(l)}\}$ is the locally ordered exceptional divisor. 

The proof of the Proposition follows from (usual) resolution of singularities of marked ideals \cite[Theorem 1.3]{bm08} (applied to the marked ideal $\underline{\mathcal{I}}_0:=(\widetilde{M}_0,\widetilde{M}_0,\widetilde{\mathcal{I}}_0, \widetilde{D}_0,1)$, in the notation of \cite{bm08}). Indeed, by \cite[Theorem 1.3]{bm08}, there exists sequence of weighted admissible (polar) blowings-up:
\[
(\widetilde{M}_{r+1},\widetilde{\mathcal{I}}_{r+1},\widetilde{D}_{r+1}) \xrightarrow{\tau_{r+1}} \ldots  \xrightarrow{\tau_2} (\widetilde{M}_1,\widetilde{\mathcal{I}}_1,\widetilde{D}_1)\xrightarrow{\tau_1} (\widetilde{M}_0,\widetilde{\mathcal{I}}_0,\widetilde{D}_0)
\]
such that $\widetilde{\mathcal{I}}_{r+1}$ is the structural sheaf (that is, it is a everywhere generated by a unit). Now, note that if $x$ is a point contained in $Supp(\mathcal{I}_0) \setminus Sing(\mathcal{I}_0)$, then the invariant $inv$ defined by Bierstone and Milman (\cite[Theorem 7.1]{bm08} - see also \cite[p. 256]{bm97}) is minimal and by \cite[Theorem 7.1(2)]{bm08}, the only blowing-up with a center containing $x$ must be $\tau_{r+1}$. Therefore, by construction $\widetilde{\mathcal{I}_r}$ is a principal and reduced ideal sheaf whose support is a non-singular submanifold (with normal crossings with $\widetilde{D}_r$), the morphism $\tau = \tau_1 \circ \ldots \circ \tau_r$ restricted to $\tau^{-1}(M_0 \setminus Sing(\mathcal{I}_0))$ is an isomorphism and $\tau(D_r) = Sing(\mathcal{I}_0)$, c.f. \cite[Section 12]{bm97}. Furthermore, the resolution sequence is functorial in respect to open immersions 

\begin{claim}
The sequence of polar blowings-up $\vec{\tau}$ gives rise to a sequence of real-analytic blowings-up:
\[
(M_r,\mathcal{I}_r,D_r) \xrightarrow{\sigma_r} \ldots  \xrightarrow{\sigma_2} (M_1,\mathcal{I}_1,D_1)\xrightarrow{\sigma_1} (M_0,\mathcal{I}_0,D_0)
\]
such that $\mathcal{I}_r$ is a principal and reduced ideal sheaf whose support is a non-singular submanifold (with normal crossings with $D_r$), the morphism $\sigma = \sigma_1 \circ \ldots \circ \sigma_r$ restricted to $\sigma^{-1}(M_0 \setminus Sing(\mathcal{I}_0))$ is an isomorphism and $\sigma(D_r) = Sing(\mathcal{I}_0)$. Furthermore the resolution sequence is functorial in respect to open immersions compatible with corners.
\end{claim}

The Proposition follows from this claim, which we prove by induction on the height $r$ of the sequence of blowings-up $\vec{\tau}$. Indeed, if $r=0$, then the result is obvious. So assume that the claim is proved for every sequence of blowings-up $\vec{\tau}'$ with height $r'< r$, and consider a sequence of blowings-up $\vec{\tau}$ with height $r$.

The first blowing-up $\tau_1$ has center $\widetilde{\mathcal{C}}_1$ adapted to the exceptional divisor $\widetilde{D}_0$. In other words, $\widetilde{\mathcal{C}}_1 = (u_{i_1}, \ldots, u_{i_t})$ for some list of indexes $[i_1, \ldots, i_t] \subset [1,\ldots, n]$. We consider $\mathcal{C}_1 = \widetilde{\mathcal{C}}_1$ as the center of blowing-up of the first real blowing-up $\sigma$, in order to get:
\[
(M_1,\mathcal{I}_1,D_1) \xrightarrow{\sigma_1} (M_0,\mathcal{I}_0,D_0)
\]
We argue over the $\sigma_1^{+}$-chart (since the treatment in each chart is analogous):
\[
\left\{
\begin{aligned}
u_1 &= x_1,\\
u_i &= x_1 \cdot x_i, \quad i=2, \ldots, t\\
u_i &= x_i, \phantom{\cdot x_i} \quad \,\,\,i=t+1 \ldots, n,
\end{aligned}
\right.
\]
where $(u_1, \ldots, u_n)$ is a coordinate system of $\mathbb{R}^{n,k}$ where $D_0 = \{\prod_{j=1}^k u_{i_j}=0 \}$ for some collection of indexes $[i_1, \ldots, i_k] \subset [1,\ldots, n]$; and $x_1\geq 0$. We further consider the $\tau_1$-chart of the polar blowing-up $\tau$:
\[
\left\{
\begin{aligned}
u_1 &= y_1,\\
u_i &= y_1 \cdot y_i, \quad i=2, \ldots, t\\
u_i &= y_i, \phantom{\cdot y_i} \quad \,\,\,i=t+1 \ldots, n
\end{aligned}
\right.
\]
and note that the there exists a trivial open immersion from the $\sigma_1^{+}$-chart to the $\tau_1$-chart. Next, consider the restriction of the sequence of (polar) blowings-up $(\tau_r, \ldots, \tau_2)$ to the $\tau_1$-chart, which has height at most $r-1<r$ (some of the blowings-up might be isomorphisms when restricted to this chart). By the induction hypothesis, this sequence gives rise to a sequence of real blowings-up $(\sigma^{1,+}_r, \ldots, \sigma^{1,+}_2)$ over the $\sigma_1^{+}$-chart, which is functorial in respect to open immersions compatible with corners. By the functoriality property, it is clear that the sequences $(\sigma^{j,\pm}_r, \ldots, \sigma^{j,\pm}_2)$ glue, which gives rise to a sequence of weighted admissible real-blowings-up $(\sigma_r, \ldots, \sigma_2)$ defined over $(M_1,\mathcal{I}_1,D_1)$. In order to conclude, we just need to remark that the first blowing-up $\sigma_1$ is also functorial in respect to open immersions compatible with corners, since $\tau_1$ is the first blowing-up of a sequence of blowings-up which is functorial. This proves the Claim and the Proposition.
\end{proof}

\begin{theorem}\label{THMhironaka}
Let $(M,\mathcal{D})$ be an analytic real-manifold with corners and consider a coherent and principal ideal sheaf $\mathcal{I}$ over $M$. For every relatively compact open set $M_0 \subset M$, there exists a sequence of strict admissible real-blowings-up with smooth centers
\[
(M_r,\mathcal{I}_r,D_r) \xrightarrow{\sigma_r} \ldots  \xrightarrow{\sigma_2} (M_1,\mathcal{I}_1,D_1)\xrightarrow{\sigma_1} (M_0,\mathcal{I}_0,D_0)
\]
(where $D_0 = \mathcal{D} \cap M_0$, $\mathcal{I}_0 = \mathcal{I} \cdot \mathcal{O}_{M_0}$ and $\mathcal{I}_{i+1}$ is the strict transform of $\mathcal{I}_i$) such that $\mathcal{I}_r$ is a principal and reduced ideal sheaf whose support is a non-singular submanifold (with normal crossings with $D_r$), the morphism $\sigma = \sigma_1 \circ \ldots \circ \sigma_r$ restricted to $\sigma^{-1}(M_0 \setminus Sing(\mathcal{I}_0))$ is an isomorphism and $\sigma(E_r) = Sing(\mathcal{I}_0)$. Furthermore, denoting by $\alpha_i : = \mu_{\mathcal{C}_{i}}(\mathcal{I}_{i-1})$ the order of $\mathcal{I}_{i-1}$ in respect to the center $\mathcal{C}_{i}$, and by $\mathcal{J}_{E_i}$ the pull-back via $\sigma$ of the reduced ideal sheaf $\mathcal{I}_{E_i}$ in $M_i$ whose support is the exceptional divisor $E_i$:
\begin{equation}\label{eq:hironaka}
\mathcal{I} \cdot \mathcal{O}_{M_r} =  \mathcal{I}_r \prod_{i=1}^r  \mathcal{J}_{E_i}^{\alpha_i}.
\end{equation}
\end{theorem}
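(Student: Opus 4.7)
The plan is to deduce Theorem \ref{THMhironaka} from Proposition \ref{THMhironakaprel} by re-using the same sequence of admissible smooth centers, but tracking the strict transforms in place of the weighted transforms, and then establishing the factorization \eqref{eq:hironaka} by induction on the length of the sequence. Concretely, I would first apply Proposition \ref{THMhironakaprel} to the principal coherent ideal $\mathcal{I}_0 = \mathcal{I} \cdot \mathcal{O}_{M_0}$ to obtain a sequence of weighted admissible real-blowings-up $\tau_i : (M_i', \mathcal{I}_i', D_i) \to (M_{i-1}', \mathcal{I}_{i-1}', D_{i-1})$ with smooth centers $\mathcal{C}_1, \ldots, \mathcal{C}_r$ having normal crossings with the corners, and then keep the very same geometric maps, denoting them now by $\sigma_i : (M_i, \mathcal{I}_i, D_i) \to (M_{i-1}, \mathcal{I}_{i-1}, D_{i-1})$, but at every stage define $\mathcal{I}_i$ to be the strict transform $\mathcal{I}_{i-1}^{st}$ of $\mathcal{I}_{i-1}$ under $\sigma_i$. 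Since the underlying blowings-up are unchanged, the resulting sequence is a sequence of strict admissible real-blowings-up with smooth centers (admissibility being a geometric property of the center with respect to $D_{i-1}$, independent of the transform rule), and the morphism $\sigma := \sigma_1 \circ \cdots \circ \sigma_r$ still restricts to an isomorphism over $M_0 \setminus \mathrm{Sing}(\mathcal{I}_0)$ with $\sigma(E_r) = \mathrm{Sing}(\mathcal{I}_0)$ inherited directly from Proposition \ref{THMhironakaprel}.

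The key bookkeeping step is the pointwise relation
\[
\mathcal{I}_{i}' \;=\; \mathcal{I}_{E_i}^{\,\alpha_i - 1} \cdot \mathcal{I}_i^{st} \qquad \text{with } \alpha_i := \mu_{\mathcal{C}_i}(\mathcal{I}_{i-1}),
\]
which follows from the two definitions $\sigma_i^\ast \mathcal{I}_{i-1} = \mathcal{I}_{E_i} \cdot \mathcal{I}_i'  = \mathcal{I}_{E_i}^{\alpha_i} \cdot \mathcal{I}_i^{st}$. Using this relation together with the fact that each center $\mathcal{C}_{i+1}$ has normal crossings with the previously generated exceptional divisors $E_1, \ldots, E_i$ (so $\mu_{\mathcal{C}_{i+1}}$ is insensitive to the purely exceptional factor $\mathcal{I}_{E_j}^{\alpha_j-1}$), I would check by induction on $i$ that the orders $\mu_{\mathcal{C}_{i}}(\mathcal{I}_{i-1}^{st})$ coincide with those computed for the weighted sequence and that consequently the strict sequence is admissible with exactly the same centers. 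At the final stage, the weighted output $\mathcal{I}_r'$ is, by Proposition \ref{THMhironakaprel}, principal and reduced with non-singular support transverse to $D_r$; the identity above at step $r$ then shows that $\mathcal{I}_r^{st}$ is also principal, locally a defining function of the smooth strict transform of $\mathrm{Supp}(\mathcal{I}_0)$, and reduced (since any non-reduced factor of $\mathcal{I}_r^{st}$ would force a non-reduced factor of $\mathcal{I}_r'$). This yields the first half of the theorem.

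For the factorization \eqref{eq:hironaka}, I would argue by induction on $i$ that
\[
\mathcal{I} \cdot \mathcal{O}_{M_i} \;=\; \mathcal{I}_i \cdot \prod_{j=1}^i \mathcal{J}_{E_j}^{(i),\,\alpha_j},
\]
where $\mathcal{J}_{E_j}^{(i)}$ denotes the pull-back to $M_i$ of the reduced exceptional ideal sheaf $\mathcal{I}_{E_j} \subset \mathcal{O}_{M_j}$. The base case $i=1$ is just the definition of the strict transform $\sigma_1^\ast \mathcal{I}_0 = \mathcal{I}_{E_1}^{\alpha_1} \cdot \mathcal{I}_1$. For the induction step, applying $\sigma_{i+1}^\ast$, using admissibility to conclude that $\sigma_{i+1}^\ast \mathcal{J}_{E_j}^{(i)} = \mathcal{J}_{E_j}^{(i+1)}$ for $j \leq i$ (the center $\mathcal{C}_{i+1}$ does not sit inside $E_j$ to higher order, by the normal-crossings hypothesis), and applying the strict-transform identity $\sigma_{i+1}^\ast \mathcal{I}_i = \mathcal{I}_{E_{i+1}}^{\alpha_{i+1}} \cdot \mathcal{I}_{i+1}$, produces the formula at level $i+1$. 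Setting $i=r$ then gives \eqref{eq:hironaka}. The main obstacle is the verification of the inductive step in paragraph two, namely that the order function and the locus of blow-up centers produced by the Bierstone--Milman algorithm for the weighted transform remains the correct and admissible choice when we switch to strict transforms; this hinges on the elementary fact that the difference $\mathcal{I}_{E_j}^{\alpha_j-1}$ between the two transforms is supported only on exceptional divisors which meet every subsequent center transversely, so contributes trivially to orders along those centers.
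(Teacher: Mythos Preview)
Your approach is exactly the paper's: reuse the blow-up sequence from Proposition~\ref{THMhironakaprel}, replace weighted by strict transforms, and obtain \eqref{eq:hironaka} by iterating the definition of strict transform (the paper adds only the remark that, by relative compactness, one may take each center connected so that $\alpha_i$ is globally well-defined). Your ``main obstacle'' is not one: admissibility of the centers is purely geometric and the $\alpha_i$ are \emph{defined} as the orders of the strict transforms along those fixed centers, so no comparison with the weighted orders is required.
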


\begin{proof}[Proof of Theorem \ref{THMhironaka}]
Consider the sequence of admissible real blowings-up \eqref{eq:res} given by Proposition \ref{THMhironakaprel}, but let $\mathcal{I}_{i+1}$ be the strict transform of $\mathcal{I}_i$. It is clear that $\mathcal{I}_r$ is a principal and reduced ideal sheaf whose support is a non-singular submanifold (with normal crossings with $D_r$), the morphism $\sigma = \sigma_1 \circ \ldots \circ \sigma_r$ restricted to $\sigma^{-1}(M_0 \setminus Sing(\mathcal{I}_0))$ is an isomorphism and $\sigma(E_r) = Sing(\mathcal{I}_0)$. Now, since $M_0$ is a relatively compact set, without loss of generality we may assume that each blowing-up of the sequence has a connected center. Equation \eqref{eq:hironaka} now follows from the definition of strict transform.
\end{proof}

\subsection{Proof of Proposition \ref{PROPresolution}}\label{SSECresolution}

We follow the notation of section \ref{ssecOverview}. Let $\mathcal{I}$ denote the ideal sheaf in $\mathcal{V}$ generated by the function $h$ (in particular, $\Sigma_{\Delta} \cap  \mathcal{V}= (\Sigma \cap \mathcal{V}, \mathcal{O}_{\mathcal{V}}/\mathcal{I})$). Note that $\mathcal{V}$ is a manifold with corners, where the boundary $D_0$ is empty. By Theorem \ref{THMhironaka} there exists a sequence of blowings-up
\[
(\mathcal{V}_r,\mathcal{I}_r,D_r) \xrightarrow{\sigma_r} \ldots  \xrightarrow{\sigma_2} (\mathcal{V}_1,\mathcal{I}_1,D_1)\xrightarrow{\sigma_1} (\mathcal{V},\mathcal{I},D)
\]
such that the support of $\mathcal{I}_r$ is a non-singular analytic surface. We take $\mathcal{W}:=\mathcal{V}_r$ and we denote by $E$ the border set $D_r$ and by $\widetilde{\mathcal{I}} = \mathcal{I}_r$. The morphism $\sigma: \mathcal{W} \to \mathcal{V}$ is the composition of blowings-up $\sigma:= \sigma_1 \circ \ldots \circ \sigma_r$, and the number $r$ of the enunciate of Proposition \ref{PROPresolution} is the number of blowings-up. The morphism $\sigma$ is proper (since it is a composition of proper morphisms), the restriction of $\sigma$ to $\mathcal{W}\setminus E$ is a diffeomorphism onto its image $\mathcal{V} \setminus \mbox{Sing}(\Sigma_{\Delta})$ and $\sigma(E)= \mbox{Sing} (\Sigma_{\Delta})$, since $\mbox{Sing} (\Sigma_{\Delta})\cap \mathcal{V} = Sing(\mathcal{I}_0)$. Note also that $\mathcal{W}$ is a Riemannian manifold and it is orientable by Lemma \ref{lem:Orientabi}. This concludes part (i).

Let $E_i$ denote the exceptional divisor associated to $\sigma_i$. Consider the function $\rho_{E_i}$ defined in Lemma \ref{lem:rho}, and denote by $\rho_i:= \rho_{E_i} \circ \sigma_{i+1} \circ \ldots \circ \sigma_r$. It is clear that $\rho_i$ is positive everywhere outside of the border $E$. Next, we note that the function $h\circ \sigma$ is a generator of the pulled back ideal sheaf $\mathcal{I} \cdot \mathcal{O}_{\mathcal{W}}$, and by the definition of $\rho_i$ and equation \eqref{eq:hironaka}, we have that 
\[
\alpha : = \prod \rho_i^{\alpha}\quad  \text{ divides } \quad h\circ \sigma.
\]
We denote by $\tilde{h}$ the smooth function given by the division (which exists by the Malgrange division Theorem). Since this is a generator of the strict transform $\widetilde{\mathcal{I}}$, we conclude that $\tilde{\Sigma} := (\tilde{h}=0)$ is an analytic sub-manifold, which concludes part (ii). Assertion (iii) of Proposition \ref{PROPresolution} is immediate from the definition of $\rho_i$ and Lemma \ref{lem:Orientabi} (where $\beta_i:= codim(\mathcal{C}_i)-1$). 

We prove part (iv) via an induction argument which controls the transform of the vector field $\mathcal{Z}$ at each step of the blowing-up sequence. Through the proof, we use the following auxiliary definition:

\begin{definition}[$\mathcal{S}$-adapted singular distributions]
Let $\Delta$ be a singular distribution of generic rank two and $\mathcal{S}$ be an analytic subset of a three dimensional manifold $M$. We say that $\Delta$ is $\mathcal{S}$-adapted if at all points $x$ in the smooth part of $\mathcal{S}$, that is, at all points $x$ where there exists a neighborhood $U_x$ of $x$ where $\mathcal{S} \cap U_x$ is a smooth analytic manifold, one of the following conditions hold:
\begin{itemize}
\item If $\mathcal{S} \cap U_x$ is a $2$-dimensional manifold, then there exists an analytic function $f: U_x \to \mathbb{R}$ such that $\mathcal{S}\cap U_x = (f=0)$ and $X \cdot f \in (f)$, for all $\,X \in \underline{\Delta}|_{U_x}$.
\item If $\mathcal{S} \cap U_x$ is a $1$-dimensional manifold, then there exists two analytic functions $f,g: U_x \to \mathbb{R}$ such that $\mathcal{S} \cap U_x = (f=g=0)$ and
\[
 det \left[ \begin{matrix} X \cdot f & X \cdot g\\ Y\cdot f & Y \cdot g \end{matrix}   \right] \in  (f,g), \, \text{ for all } \, X,Y \in \underline{\Delta}|_{U_x}.
\]
\end{itemize}
\end{definition}

Through our proof, we show that the (singular) distribution $\Delta$ and its transforms are adapted in respect to $Sing(\Sigma_{\Delta})$ and its transforms. This will imply that the vector-field $Z$ defined in Proposition \ref{PROPresolution}(iv) is tangent to $E = \sigma^{-1}(Sing(\Sigma_{\Delta}))$, which is a locally finite union of $2$-dimensional analytic manifolds. We start by the following claim:

\begin{claim}\label{cl:step3a}
Let $\mathcal{S}_0:=Sing(\Sigma_{\Delta})$ be the set of singularities of the Martinet analytic space $\Sigma_{\Delta}$. Suppose that $\Delta(x)$ generates the tangent space of the singular set $\mathcal{S}_0$, that is, $\Delta(y) \cap T_y \, \mathcal{S}_0 = T_y \, \mathcal{S}_0$ for all $y\in \mathcal{S}_0 $. Then, the non-holonomic distribution $\Delta$ is $\mathcal{S}_0$-adapted.
\end{claim}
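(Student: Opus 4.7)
The plan is to reduce to a single local analysis at each smooth point of $\mathcal{S}_0$ and then construct a frame of $\Delta$ in which tangency to $\mathcal{S}_0$ is manifest. Since $\mathcal{S}_0=\mbox{Sing}(\Sigma_\Delta)$ is a closed analytic subset of codimension at least two in the three-dimensional manifold $M$, its smooth part has dimension at most one, so only the one-dimensional alternative in the definition of ``$\mathcal{S}$-adapted'' can actually occur (isolated points carry no condition to verify). Fixing a smooth one-dimensional point $x$ of $\mathcal{S}_0$, I would first choose analytic defining functions $f, g$ on a neighborhood $U_x$ with $\mathcal{S}_0 \cap U_x = \{f=g=0\}$ and $d_xf, d_xg$ linearly independent; this guarantees that the ideal $(f,g)\subset \mathcal{O}_{U_x}$ is reduced, with zero set exactly $\mathcal{S}_0 \cap U_x$.

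Next I would produce an analytic local frame $X, Y$ of $\Delta$ near $x$ in which $X$ is tangent to $\mathcal{S}_0$. Let $V$ be an analytic vector field on $\mathcal{S}_0\cap U_x$ generating $T\mathcal{S}_0$, and let $X_0, Y_0$ be any local analytic frame of $\Delta$. The hypothesis $T_y\mathcal{S}_0 \subset \Delta(y)$ on $\mathcal{S}_0$ lets me write $V(y) = a(y)\, X_0(y) + b(y)\, Y_0(y)$ for analytic functions $a, b$ on $\mathcal{S}_0$. Choosing analytic coordinates adapted to $\mathcal{S}_0$ (so that $\mathcal{S}_0$ becomes a coordinate line), I extend $a, b$ to analytic functions $\tilde a, \tilde b$ on $U_x$ and set $X := \tilde a X_0 + \tilde b Y_0$. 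By construction, $X$ is an analytic section of $\Delta$ whose restriction to $\mathcal{S}_0$ equals $V$; in particular $X(x)\neq 0$, so I can complete $X$ to a local frame $X, Y$ of $\Delta$.

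The verification is then immediate: since $X$ is tangent to $\mathcal{S}_0$ and $f, g$ vanish on $\mathcal{S}_0$, the functions $X\cdot f$ and $X\cdot g$ vanish on the zero set of the radical ideal $(f,g)$ and hence belong to $(f,g)$. Consequently, the determinant
\[
D(X,Y) := (X\cdot f)(Y\cdot g) - (X\cdot g)(Y\cdot f)
\]
lies in $(f,g)$. For an arbitrary pair $X', Y'$ of local analytic sections of $\Delta$, I would write $X' = \alpha X + \beta Y$ and $Y' = \gamma X + \delta Y$ with analytic coefficients and use multilinearity of the $2{\times}2$ determinant to deduce $D(X',Y') = (\alpha\delta - \beta\gamma)\, D(X,Y) \in (f,g)$, completing the proof. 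The only technical nuance I expect is the analytic extension step of the middle paragraph, which is standard at a smooth point of $\mathcal{S}_0$ but must be done with care in the analytic (rather than merely $C^\infty$) category; everything else reduces to the radicality of $(f,g)$ at such complete-intersection points and the transformation law of the determinant under the change of frame.
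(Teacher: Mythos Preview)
Your proposal is correct and follows essentially the same line as the paper's proof: both reduce to a smooth one-dimensional point of $\mathcal{S}_0$, construct a local frame $X,Y$ of $\Delta$ in which $X$ is tangent to $\mathcal{S}_0$, and then observe that $X\cdot f, X\cdot g\in(f,g)$ forces the $2\times 2$ determinant into $(f,g)$. The only cosmetic difference is that the paper works in adapted coordinates $(x_1,x_2,x_3)$ with $\mathcal{S}_0=\{x_2=x_3=0\}$ and writes $X=\partial_{x_1}+A\,\partial_{x_2}+B\,\partial_{x_3}$ directly, whereas you phrase the same construction via analytic extension of the coefficients $a,b$ and invoke radicality of $(f,g)$; your final change-of-frame step making the determinant condition frame-independent is a nice explicit touch that the paper leaves implicit.
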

\begin{proof}[Proof of Claim \ref{cl:step3a}]
Recall that $\mathcal{S}_0$ is of codimension $2$, and fix a point $x\in \mathcal{S}_0$ and a neighborhood $\mathcal{V}$ of $x$ where $\mathcal{S}_0 \cap \mathcal{V}$ is locally a smooth curve. Without loss of generality, there exists two vector-fields $X$ and $Y$ which generate $\underline{\Delta}|_{\mathcal{V}}$ and a coordinate system $(x_1,x_2,x_3)$ defined on $\mathcal{V}$ and centered at $x$ such that $\mathcal{S}_0 \cap \mathcal{V}= (x_2 = x_3 =0)$. By hypothesis:
\[
\Delta(y) \supset T_y \, \mathcal{S}_0= \vec{x}_1, \quad \forall \, y \in \mathcal{S}_0 \cap \mathcal{V},
\] 
therefore, without loss of generality we can assume that
\[
X = \partial_{x_1} + A \partial_{x_2} + B\partial_{x_3},
\]
where $A(x_1,0,0) = B(x_1,0,0) =0$, which implies that $X \cdot x_2 = A \in (x_2,x_3)$ and $X \cdot x_3 =B \in (x_2,x_3)$. The Claim follows easily.
\end{proof}

Now, we show that $Z$ defined in Proposition \ref{PROPresolution} has removable singularities on $E$. We recall that, by equation \eqref{EQZ3}, the pulled back vector field $\mathcal{Z}^{\ast}$ satisfies equation \eqref{10aout2}, that is:
\[
\begin{aligned}
\mathcal{Z}^{\ast} = \tilde{\mathcal{Z}} + \tilde{h} \, \tilde{W}, \quad \text{ where }\quad & \tilde{\mathcal{Z}} := \alpha \big[ (X^{\ast} \cdot \tilde{h} )  Y^{\ast} - (Y^{\ast} \cdot \tilde{h}  ) X^{\ast}  \big],\\
& \tilde{W}:=(X^{\ast} \cdot \alpha )  Y^{\ast} - (Y^{\ast} \cdot \alpha  ) X^{\ast},\\
& X^{\ast} := d\sigma^{-1}(X), \text{ and } Y^{\ast} := d\sigma^{-1}(Y).
\end{aligned}
\]
In particular, the vector-field $Z$, given by the expression of the enunciate of Proposition \ref{PROPresolution}, is given by the following expression on $\tilde{\Sigma} \setminus E$:
\begin{equation}
\begin{aligned}
Z: = \frac{\beta}{\alpha} \mathcal{Z}^{\ast} = \frac{\beta}{\alpha} \tilde{\mathcal{Z}} = \beta \,& \big[ (X^{\ast} \cdot \tilde{h} )  Y^{\ast} - (Y^{\ast} \cdot \tilde{h}  ) X^{\ast}  \big] \\
=  \prod_{i=1}^r \rho_i^{\beta_i} &\big[ (X^{\ast} \cdot \tilde{h} )  Y^{\ast} - (Y^{\ast} \cdot \tilde{h}  ) X^{\ast}  \big].
\end{aligned}
\label{eq:step3a}
\end{equation}

Note that the vector-fields $X^{\ast}$ and $Y^{\ast}$ have poles over $E$. In what follows, we show that the exponents $\beta_i$ (which are equal to $codim(\mathcal{C}_i)-1$) are sufficient to compensate the poles. Our proof is done by induction on the height $k$ of the sequence of blowings-up. More precisely, fix a number $k\leq r$ and consider the following objects defined at each $k$-step:
\begin{itemize}
\item Let $\sigma^{k}:= \sigma_1 \circ \ldots \circ \sigma_k: \mathcal{V}_k \to \mathcal{V}_0 = \mathcal{V}$ be the composition of the first $k$ blowings-up, and denote by $E^k$ the exceptional divisor related to the entire sequence $\sigma^{k}$ (in particular, $E^r = E$). 
\item Consider the singular analytic distribution $\Delta_k$ of generic rank two over $\mathcal{V}_k$ which is defined recursively by
\[
\Delta_0 := \Delta|_{\mathcal{V}} \quad \text{ and } \quad \Delta_k := d\sigma^{\ast}(\Delta_{k-1}) \cap Der_{\mathcal{V}_k}(-log\,E_k),
\]
where $Der_{\mathcal{V}_k}(-log\,E_k)$ is the sub-sheaf of analytic derivations in $\mathcal{V}_k$ which are tangent to the exceptional divisor $E_k$ of $\sigma_k$ (we note that $\Delta_k$ is a coherent sub-sheaf of $Der_{\mathcal{V}_k}$ - and therefore a singular distribution - by \cite[Chapter II Corollary 6.8]{Tougeron}).
\item Consider the analytic set $\mathcal{S}_k$ over $\mathcal{V}_k$ which is defined recursively by
\[
\mathcal{S}_0 := Sing(\Sigma_{\Delta}) \cap \mathcal{V} \quad \text{ and } \quad \mathcal{S}_k:= \sigma_{k}^{-1}(\mathcal{S}_{k-1}).
\]
\item Consider the smooth function $h_k : \mathcal{V}_k \to \mathbb{R}$ which is defined recursively by
\[
h_0 := h \quad \text{ and } \quad h_k:=  \rho_{E_k}^{-\alpha_k}  h_{k-1} \circ \sigma_{k},
\]
where $\alpha_k$ is defined in Theorem \ref{THMhironaka} (and is equal to $\mu_{\mathcal{C}_{k}}(h_{k-1})$ - in particular $h_r = \tilde{h}$). Furthermore, consider the analytic set $\Sigma_k = (h_k=0)$ (and note that $\Sigma_r = \tilde{\Sigma}$).
\item We denote by $X^{\ast,k}:= (d\sigma^{k})^{-1}X$ and $Y^{\ast,k}:= (d\sigma^{k})^{-1}Y$. Consider the vector-field $Z_k$ which is defined in $\Sigma_k \setminus E^k$ by:
\[
Z_k := \prod_{i=1}^k \rho_i^{\beta_i} \big[ (X^{\ast,k} \cdot h_k )  Y^{\ast,k} - (Y^{\ast,k} \cdot h_k  ) X^{\ast,k}  \big].
\]
\end{itemize}

\begin{claim}[Induction Claim]\label{cl:step3b}
For each $k \leq r$, the analytic set $\mathcal{S}_k$ is the union of a finite number of two-dimensional smooth manifolds with a codimension two analytic set, the singular distribution $\Delta_k$ is $\mathcal{S}_k$-adapted and at each point $x\in \mathcal{V}_k$, there exists a neighborhood $U_x$ of $x$, two vector fields $X_k$ and $Y_k$ in $\underline{\Delta}_k|_{U_x}$ and a smooth function $\xi_k: U_x \to \mathbb{R}$ which is everywhere non-zero in $U_x$ such that over points in $\left( \Sigma_k \cap U_x\right) \setminus E^k$
\[
Z_k  = \xi_k \, \big[ (X_k \cdot h_k )  Y_k - (Y_k \cdot h_k  ) X_k \big].
\]
\end{claim}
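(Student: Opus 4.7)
The plan is to argue by induction on $k$, with the base case $k=0$ being immediate: $\mathcal{S}_0=\mathrm{Sing}(\Sigma_\Delta)$ has the required stratified structure by the defining properties (i)-(ii) of the analytic space $\Sigma_\Delta$ recalled in the introduction, $\Delta_0=\Delta$ is $\mathcal{S}_0$-adapted by Claim \ref{cl:step3a} (whose hypothesis is exactly assumption \eqref{ASSTHM2}), and the vector field $Z_0$ agrees with $\mathcal{Z}$, so the desired formula holds with $X_0=X$, $Y_0=Y$, and $\xi_0\equiv 1$.

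For the inductive step, assume the claim at level $k$ and consider the blowup $\sigma_{k+1}:\mathcal{V}_{k+1}\to\mathcal{V}_k$ with admissible center $\mathcal{C}_{k+1}$. By Theorem \ref{THMhironaka} the center is contained in $\mathrm{Sing}(\mathcal{I}_k)$, hence in $\mathcal{S}_k$, and admissibility means it has normal crossings with $E^k$. I would fix a point $p\in\mathcal{C}_{k+1}$ and work in a directional chart of $\sigma_{k+1}$, distinguishing the codimension-two case (where locally $\mathcal{C}_{k+1}=(f=g=0)$ for $f,g$ defining the smooth $1$-dimensional piece of $\mathcal{S}_k$) from the codimension-three case (where $\mathcal{C}_{k+1}$ is an isolated point, so $f,g,$ and an additional coordinate vanish). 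The $\mathcal{S}_k$-adapted hypothesis then provides membership relations such as $X_k\cdot f,\,X_k\cdot g\in (f,g)$ and the vanishing of the Jacobian $2\times 2$ minor modulo $(f,g)$; a direct computation in the chart shows that the pulled-back generators $X_k^{*},Y_k^{*}$ admit renormalizations $X_{k+1},Y_{k+1}\in\underline{\Delta}_{k+1}$ obtained by dividing each component by an appropriate power of the local equation $\rho_{E_{k+1}}$ and taking a linear combination that kills the tangential-to-the-fiber component. The same relations show that $X_{k+1},Y_{k+1}$ satisfy the $\mathcal{S}_{k+1}$-adapted conditions, using that $\mathcal{S}_{k+1}=\sigma_{k+1}^{-1}(\mathcal{S}_k)$ decomposes as the strict transform (smooth outside the new codimension-two part) plus at most a new $2$-dimensional piece coming from the exceptional divisor above $\mathcal{S}_k^{1}$. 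This settles assertions (1) and (2).

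The remaining work is to verify the formula for $Z_{k+1}$. Starting from the definition
\[
Z_{k+1}=\prod_{i=1}^{k+1}\rho_i^{\beta_i}\bigl[(X^{*,k+1}\!\cdot h_{k+1})Y^{*,k+1}-(Y^{*,k+1}\!\cdot h_{k+1})X^{*,k+1}\bigr],
\]
I would substitute $h_{k+1}=\rho_{E_{k+1}}^{-\alpha_{k+1}}h_k\circ\sigma_{k+1}$ and apply the Leibniz rule to the factor $\rho_{E_{k+1}}^{-\alpha_{k+1}}$; the derivative-of-$\rho_{E_{k+1}}$ contribution is symmetric in $X^{*,k+1},Y^{*,k+1}$ and cancels in the antisymmetric bracket (this uses that, on the exceptional divisor, $X_{k+1}$ and $Y_{k+1}$ are tangent to $E_{k+1}$, so the pullback derivatives of $\rho_{E_{k+1}}$ in the "bad" direction are absorbed). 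Plugging in the inductive expression for $Z_k$ and the bookkeeping of pole orders along $E_{k+1}$ (the pullback of $X_k,Y_k$ introduces poles of total order at most $\mathrm{codim}(\mathcal{C}_{k+1})-1=\beta_{k+1}$, which are exactly compensated by $\rho_{k+1}^{\beta_{k+1}}$, while the powers $\rho_{E_{k+1}}^{\alpha_{k+1}}$ coming from $h_k\circ\sigma_{k+1}$ are killed by the division defining $h_{k+1}$), one obtains the desired identity with a smooth nowhere-vanishing $\xi_{k+1}$.

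The main obstacle is this pole-order bookkeeping in a blowup chart, together with the verification that the renormalized $X_{k+1},Y_{k+1}$ are genuine sections of $\Delta_{k+1}=d\sigma_{k+1}^{*}(\Delta_k)\cap\mathrm{Der}_{\mathcal{V}_{k+1}}(-\log E_{k+1})$ and continue to satisfy the adapted property. I expect the argument to split into two or three sub-cases according to the codimension of $\mathcal{C}_{k+1}$ and the directional chart, with essentially the same calculation in each; the cancellation of the Leibniz correction and the exact matching between $\mathrm{codim}(\mathcal{C}_{k+1})-1$ poles and the weight $\beta_{k+1}$ are precisely where the choice of weighted resolution, together with the $\mathcal{S}_k$-adapted hypothesis, is used in an essential way.
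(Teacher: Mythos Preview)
Your outline follows the paper's inductive strategy, but two steps are not right as stated.

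First, the Leibniz correction does not cancel by symmetry. Expanding $X^{*,k+1}\cdot h_{k+1}$ with $h_{k+1}=\rho^{-\alpha_{k+1}}(h_k\circ\sigma_{k+1})$, the extra term in the bracket is
\[
-\alpha_{k+1}\,\rho^{-\alpha_{k+1}-1}\,(h_k\circ\sigma_{k+1})\,\bigl[(X^{*,k+1}\!\cdot\rho)\,Y^{*,k+1}-(Y^{*,k+1}\!\cdot\rho)\,X^{*,k+1}\bigr],
\]
which is antisymmetric, not symmetric. It vanishes on $\Sigma_{k+1}$ only because it carries the factor $h_k\circ\sigma_{k+1}=\rho^{\alpha_{k+1}}h_{k+1}$, and $h_{k+1}=0$ there; no tangency of $X_{k+1},Y_{k+1}$ to $E_{k+1}$ enters at this step. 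This is how the paper arrives at its intermediate identity $Z_k=\rho_{E_k}^{\beta_k}\xi_{k-1}^*\bigl[(X_{k-1}^*\cdot h_k)Y_{k-1}^*-(Y_{k-1}^*\cdot h_k)X_{k-1}^*\bigr]$.

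Second, your case split is incomplete, and the adapted condition is weaker than you use. You describe the codimension-two center as lying in the $1$-dimensional piece of $\mathcal{S}_k$, but after one blowup $\mathcal{S}_k$ acquires $2$-dimensional components (the exceptional divisors), and later centers can be curves inside those. The paper distinguishes three cases: (I) $\mathcal{C}_k$ a point ($\beta_k=2$), where $X_k=x_1X_{k-1}^*$, $Y_k=x_1Y_{k-1}^*$ suffice; (II) $\mathcal{C}_k$ a curve in a $2$-dimensional stratum; (III) $\mathcal{C}_k$ a curve in a $1$-dimensional stratum. In the curve cases the naive pole count along $E_k$ is $2$, not $\beta_k=1$, so one factor of $\rho_{k+1}$ is insufficient. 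The remedy is the identity $(X\cdot H)Y-(Y\cdot H)X=(F_1G_2-F_2G_1)\bigl[(\tilde X\cdot H)\tilde Y-(\tilde Y\cdot H)\tilde X\bigr]$ together with the adapted condition, used differently in each sub-case: in (II) it is tangency to the surface, $X\cdot f\in(f)$; in (III) it is only the determinant condition $\det\bigl(\begin{smallmatrix}X\cdot f & X\cdot g\\ Y\cdot f & Y\cdot g\end{smallmatrix}\bigr)\in(f,g)$, not the stronger relations $X_k\cdot f,X_k\cdot g\in(f,g)$ you claimed. In both one replaces $Y_{k-1}$ by $\tilde Y=Y_{k-1}-cX_{k-1}$ so that $d\sigma_k^{-1}(\tilde Y)$ is already regular and tangent to $E_k$, and then only $X_k=x_1\,d\sigma_k^{-1}(X_{k-1})$ needs the $x_1$ factor.
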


Proposition \ref{PROPresolution} follows from the above claim for $k=r$. Indeed, note that $\mathcal{S}_r = E$ and $h_r = \tilde{h}$. By the above claim, at each point $x \in \tilde{\Sigma}$ there exists a neighborhood $U_x$ of $x$, two analytic vector-fields $\tilde{X}$ and $\tilde{Y}$ over $U_x$ (which are tangent to $E = \mathcal{S}_r$) and a smooth function $\tilde{\xi}: U_x \to \mathbb{R}$ which is everywhere non-zero in $U_x$ such that over points in $(\tilde{\Sigma} \cap U_x) \setminus E$
\[
Z   = \tilde{\xi} \, \big[ (\tilde{X} \cdot \tilde{h} )  \tilde{Y} - (\tilde{Y} \cdot \tilde{h}  ) \tilde{X} \big].
\]
Finally, the right hand side of the above equation is defined over the entire $U_x$ and is tangent to $E$, which proves the Lemma. We now turn to the proof of the Claim.

\begin{proof}[Proof of Claim \ref{cl:step3b}]
The case $k=0$ is a consequence of Claim \ref{cl:step3a}. Now, suppose that the Claim is proved for every $k'<k$ and let us prove that it is also valid for $k$. It is not difficult to see that $\Delta_k$ is $\mathcal{S}_k$ tangent because of the induction hypothesis and the definition of $\Delta_k$ and $\mathcal{S}_k$. Now, fix a point $x\in \mathcal{V}_k$ and consider $y = \sigma(x)$. If the point $y$ does not belong to the center $\mathcal{C}_k$ of the blowing-up $\sigma_k$ or if $\mathcal{C}_k$ has codimension one, then the result is clear from the fact that $\sigma$ is locally a diffeomorphism and $\mathcal{C}_k$ is contained in $\mathcal{S}_{k-1}$. Therefore, we can suppose that $x \in E_k$, $y \in \mathcal{C}_k$ and that $\mathcal{C}_k$ has codimension at least two. There exists a coordinate system $x= (x_1,x_2,x_3)$ (well-defined in an open neighborhood $U_x$ and) centered at $x$ and $y=(y_1,y_2,y_3)$ (well-defined in an open neighborhood $U_y$ and) centered at $y$ such that 
\[
y_1 = x_1, \quad y_2 = x_1 x_2, \quad y_3 = x_1^{\epsilon} x_3,
\]
where $\mathcal{C}_k = (y_1 = y_2 = \epsilon y_3=0)$, $\epsilon = codim(\mathcal{C})-2 \in \{0,1\}$ and $E_k = (x_1=0)$. By induction hypothesis, there exist two analytic vector-fields $X_{k-1}$ and $Y_{k-1}$ in $\Delta_{k-1}|_{U_y}$ and a smooth function $\xi_{k-1}$ such that in $\left( \Sigma_{k-1} \cap U_y\right) \setminus E^{k-1}$
\[
Z_{k-1} =  \xi_{k-1} \, \big[ (X_{k-1} \cdot h_{k-1} )  Y_{k-1} - (Y_{k-1} \cdot h_{k-1}  ) X_{k-1} \big],
\]
and we note that $Z_k$ in $\left( \Sigma_{k} \cap U_x\right) \setminus E^{k}$ is given by:
\begin{equation}\label{eq:step3}
\begin{aligned}
Z_k  &=  \rho_{E_k}^{ \beta_k - \alpha_k}  \, d\sigma_k^{-1}(Z_{k-1}) = \rho_{E_k}^{\beta_k} \,\xi_{k-1}^{\ast}  \, \big[ (X_{k-1}^{\ast} \cdot h_{k} )  Y_{k-1}^{\ast} - (Y_{k-1}^{\ast} \cdot h_{k}  ) X_{k-1}^{\ast} \big],
\end{aligned}
\end{equation}
where $ X_{k-1}^{\ast} = d\sigma_k^{-1}X_{k-1}$ and $ Y_{k-1}^{\ast} = d\sigma_k^{-1}Y_{k-1}$. Before computing the transform $Z_k$ explicitly, we make two remarks:
\begin{itemize}
\item[(i)] We compute the transforms $X_{k-1}^{\ast}$ and $Y_{k-1}^{\ast}$ in terms of $X_{k-1}$ and $Y_{k-1}$:
\begin{equation}
\begin{aligned}
X_{k-1} &= \sum_{i=1}^3 A_i \partial_{y_i} \implies  X_{k-1}^{\ast} = A_1^{\ast} \left(\partial_{x_1} - \frac{x_2}{x_1} \partial_{x_2} - \epsilon\, \frac{x_3}{x_1}\partial_{x_3}   \right)+ \frac{A_2^{\ast}}{x_1}\partial_{x_2}+\frac{A_3^{\ast} }{x_1^{\epsilon}}\partial_{x_3},\\
Y_{k-1} &= \sum_{i=1}^3 B_i \partial_{y_i} \implies  Y_{k-1}^{\ast} = B_1^{\ast} \left(\partial_{x_1} - \frac{x_2}{x_1} \partial_{x_2} - \epsilon\, \frac{x_3}{x_1}\partial_{x_3}   \right)+ \frac{B_2^{\ast}}{x_1}\partial_{x_2}+\frac{B_3^{\ast} }{x_1^{\epsilon}}\partial_{x_3},
\end{aligned}
\label{eq:step3b}
\end{equation}
where $A_i^{\ast}:= A_i \circ \sigma_k$ and $B_i^{\ast}: =B_i \circ \sigma_k$ for $i=1,2 $ and $3$.
\item[(ii)] Given smooth functions $F_1$, $F_2$, $G_1$, $G_2$ and $H$, consider $X = F_1 \tilde{X} +  G_1 \tilde{Y}$ and $Y = F_2 \tilde{X} + G_2 \tilde{Y}$. We note that:
\begin{equation}
\label{eq:step3c}
(X \cdot H )  Y - (Y \cdot H  ) X = (F_1 G_2 - F_2 G_1)\big[ (\tilde{X} \cdot H )  \tilde{Y} - (\tilde{Y} \cdot H  ) \tilde{X} \big].
\end{equation}
\end{itemize}
We divide in three cases depending on the nature of $\mathcal{C}$:

\medskip
\noindent
\emph{Case I:} If $\mathcal{C}_k$ is a point (in which case $\epsilon =1$ and $\beta=2$), then by equation \eqref{eq:step3b} the vector fields $x_k = x_1 X^{\ast}_{k-1}$ and $Y_k = x_1Y^{\ast}_{k-1}$ are well-defined vector-fields which are tangent to $E_k = (x_1=0)$. In particular, $X_k$ and $Y_k$ are local sections of $\Delta_k$. Next, from the definition of $\rho_{E_k}$ (see Lemma \ref{lem:rho}), we know that $\rho_{E_k} = x_1 \cdot \tilde{\xi}_k$ where $\tilde{\xi}_k$ is an everywhere non-zero smooth function. Take $\xi_k:= \tilde{\xi}_k^2 \cdot \xi_{k-1}^{\ast} $ and, by equation \eqref{eq:step3}, we get:
\[
Z_k =  \xi_k \, \big[(X_k \cdot h_k )  Y_k - (Y_k \cdot h_k  )X_k\big],
\]
which proves the Claim in this case.

\medskip
\noindent
\emph{Case II:} If $\mathcal{C}_k$ is a curve contained in a $2$-dimensional locus of $S_{k-1}$ (in which case $\epsilon =0$ and $\beta = 1$). In this case, $(y_i=0) \subset S_{k}$ for $i=1$ or $2$. If $i=1$ (resp. $i=2$), then we can write $B_j = b_j(y_3) + \tilde{B}_j$ where $\tilde{B}_j(0,0,y_3) = 0$ (resp. $A_j = a_j(y_3) + \tilde{A}_j$ where $\tilde{A}_j(0,0,y_3) = 0$). Without loss of generality we can suppose that $b_1$ divides $b_2$ (resp. $a_1$ divides $a_2$). Now consider
\[
\tilde{X} = X_{k-1}, \quad \tilde{Y} = Y_{k-1} - \frac{b_2(y_3)}{b_1(y_3)} X_{k-1}, \quad  (resp. \, \tilde{X} = X_{k-1} , \quad \tilde{Y} = Y_{k-1} - \frac{a_2(y_3)}{a_1(y_3)} X_{k-1}),
\]
which implies that $\tilde{Y}(y_1) \subset (y_1,y_2)$ and $\tilde{Y}(y_2) \subset (y_1,y_2)$. Now by equation \eqref{eq:step3b}, the vector-fields $X_k:= x_1  d\sigma_k^{-1}(\tilde{X})$ and $Y_k := d\sigma_k^{-1}(\tilde{Y})$ belong to $\Delta_k$. Next, from the definition of $\rho_{E_k}$ (see Lemma \ref{lem:rho}), we know that $\rho_{E_k} = x_1 \cdot \tilde{\xi}_k$ where $\tilde{\xi}_k$ is an everywhere non-zero smooth function. Take $\xi_k:= \tilde{\xi}_k \cdot \xi_{k-1}^{\ast} $ and, by equations \eqref{eq:step3} and \eqref{eq:step3c}, we get:
\[
Z_k =  \xi_k \, \big[(X_k \cdot h_k )  Y_k - (Y_k \cdot h_k  )X_k\big],
\]
which proves the Claim in this case. 

\medskip
\noindent
\emph{Case III:} If $\mathcal{C}_k$ is a curve contained in a $1$-dimensional locus of $S_{k-1}$ (in which case $\epsilon =0$, $\beta = 1$ and $(y_1 = y_2=0) \subset S_{k-1}$). In this case, we note that
\[
det \left[\begin{matrix}
X_{k-1}  \cdot y_1 &  X_{k-1} \cdot y_2 \\
Y_{k-1}  \cdot y_1 &  Y_{k-1} \cdot y_2
\end{matrix} \right] \subset (y_1,y_2).
\]
Now, from equation $\eqref{eq:step3b}$ we write $A_j = a_j(y_3) + \tilde{A}_j$ and $B_j = b_j(y_3) + \tilde{B}_j$, where $\tilde{A}_j(0,0,y_3) = \tilde{B}_j(0,0,y_3) = 0$. Without loss of generality we can suppose that $b_1(y_3)$ divides $b_2(y_3)$ and, therefore, that $a_2(y_3) = a_1(y_3) \, \frac{b_2(y_3)}{b_1(y_3)}$. Now consider
\[
\tilde{X} = X_{k-1}, \quad \tilde{Y} = Y_{k-1} - \frac{b_2(y_3)}{b_1(y_3)} X_{k-1},
\]
which implies that $\tilde{Y}(y_1) \subset (y_1,y_2)$ and $\tilde{Y}(y_2) \subset (y_1,y_2)$. Now by equation \eqref{eq:step3b}, the vector-fields $X_k:= x_1  d\sigma_k^{-1}(\tilde{X})$ and $Y_k := d\sigma_k^{-1}(\tilde{Y})$ belong to $\Delta_k$. Next, from the definition of $\rho_{E_k}$ (see Lemma \ref{lem:rho}), we know that $\rho_{E_k} = x_1 \cdot \tilde{\xi}_k$ where $\tilde{\xi}_k$ is an everywhere non-zero smooth function. Take $\xi_k:= \tilde{\xi}_k \cdot \xi_{k-1}^{\ast} $ and, by equations \eqref{eq:step3} and \eqref{eq:step3c}, we get:
\[
Z_k =  \xi_k \, \big[(X_k \cdot h_k )  Y_k - (Y_k \cdot h_k  )X_k\big],
\]
which proves the Claim in this case.

Finally, the above three cases cover all lists of possibilities since $\mathcal{S}_{k-1}$ is the union of a finite number of two-dimensional smooth manifolds with a codimension two analytic set. This finishes the proof.
\end{proof}

\appendix

\section{Characterization of singular curves}

We recall here the characterization of singular horizontal paths which allows to show that in the case of rank-two distritibutions in dimension three the singular paths are those horizontal paths which are contained in the Martinet surface. So, let us consider a totally nonholonomic distribution $\Delta$ of rank $m <n$ on $M$ which  is globally generated by a family of $k$ smooth vector fields $X^1, \ldots, X^k$ on $M$ so that 
\begin{eqnarray*}
\Delta (x) = \mbox{Span} \Bigl\{ X^1(x), \ldots, X^k(x) \Bigr\}  \qquad \forall x \in M. 
\end{eqnarray*}
Thanks to the above parametrization of $\Delta$, for every horizontal path $\gamma : [0,1] \rightarrow M$  there is a control $u\in L^1([0,1],\R^k)$ such that 
\begin{eqnarray}\label{singularL1}
\dot{\gamma}(t) = \sum_{i=1}^k u_i(t) X^i(\gamma(t)) \quad \mbox{for a.e. } t \in [0,1].
\end{eqnarray}
Define $k$ Hamiltonians $h^1, \ldots, h^k :T^*M \rightarrow \R$ by 
$$
h^i := h_{X^i} \qquad \forall i=1, \ldots, k,
$$
that is 
$$
h^i(\psi) = p\cdot X^i(x) \qquad \forall \psi=(x,p)\in T^*M, \, \forall i=1, \ldots, k,
$$
and for every $i=1, \ldots, k$, $\overrightarrow{h}_i$ denote the Hamiltonian vector field on $T^*M$ associated to $h_i$, that is satisfying $\iota_{\overrightarrow{H}}\omega=-dH$, where $\omega$ denotes the canonical symplectic form on $T*M$. In local coordinates on $T^*M$, the Hamiltonian vector field $\overrightarrow{h}_i$ reads
$$
\overrightarrow{h}_i (x,p) = \left( \frac{ \partial h_i}{\partial p} (x,p), -  \frac{ \partial h_i}{\partial x} (x,p) \right).
$$
Singular horizontal controls can be characterized as follows (see \cite[Proposition 1.11]{riffordbook}).  

\begin{proposition}\label{propSING}
An horizontal path $\gamma : [0,1] \rightarrow M$  is singular if and only if there is an absolutely continuous lift $\psi : [0,1] \rightarrow T^*M \setminus \{0\}$ such that
\begin{eqnarray}\label{propsing1}
\dot{\psi}(t) = \sum_{i=1}^k u_i(t) \vec{h}^i \bigl( \psi(t)\bigr)  \quad \mbox{for a.e. } t \in [0,1]
\end{eqnarray}
and
\begin{eqnarray}\label{propsing2}
h^i(\psi(t)) =0, \quad \forall t\in [0,1], \quad \forall i=1,\cdots,k,
\end{eqnarray}
where $u\in L^1([0,1],\R^k)$ is a control satisfying (\ref{singularL1}).
\end{proposition}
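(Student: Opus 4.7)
My plan is to directly unwind the definition of singularity through the differential of the End-Point Mapping and to recognize the resulting adjoint equation as the Hamiltonian system generated by $h^1,\dots,h^k$.

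First, I would fix $u\in\mathcal{U}^x$ with $\gamma(\cdot)=x(\cdot;x,u)$ and compute the derivative $D_uE^x$. Working in a local trivialization and denoting by $\Phi(t,s)$ the resolvent (fundamental matrix) of the linearized system
\[
\dot{v}(t) = \Bigl(\sum_{i=1}^k u_i(t)\,dX^i(\gamma(t))\Bigr) v(t),
\]
the classical variation-of-constants formula gives
\[
D_uE^x(w) \;=\; \int_0^1 \Phi(1,t)\,\sum_{i=1}^k w_i(t)\,X^i(\gamma(t))\,dt,\qquad w\in L^2([0,1],\R^k).
\]
Hence $u$ is singular (with respect to $x$) if and only if there exists a non-zero covector $p\in T^*_{\gamma(1)}M$ annihilating the image of $D_uE^x$, i.e.
\[
p\cdot \Phi(1,t)\,X^i(\gamma(t)) \;=\; 0 \qquad \text{for a.e. } t\in[0,1],\ \forall\, i=1,\dots,k.
\]

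Next, I would define the curve $\psi:[0,1]\to T^*M$ by backward transport of $p$: set $\psi(1)=p$ and, for $t<1$, let $p(t):=\Phi(1,t)^\top p$ in the local coordinates, so that $\psi(t)=(\gamma(t),p(t))$. Then $\psi$ is absolutely continuous, nowhere vanishing (since $\Phi(1,t)$ is invertible and $p\neq 0$), and by differentiating the identity above it satisfies the adjoint equation
\[
\dot{p}(t) \;=\; -\,\Bigl(\sum_{i=1}^k u_i(t)\,dX^i(\gamma(t))\Bigr)^{\!\top} p(t) \qquad\text{for a.e. } t.
\]
A direct computation in canonical coordinates on $T^*M$ shows that this coupled system for $(\gamma,p)$ is exactly
\[
\dot{\psi}(t)=\sum_{i=1}^k u_i(t)\,\vec{h}^{\,i}(\psi(t)),
\]
since $\vec{h}^{\,i}(x,p)=(X^i(x),-dX^i(x)^{\top}p)$, which is property (\ref{propsing1}). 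Moreover, the annihilation condition rewrites as $p(t)\cdot X^i(\gamma(t))=0$ for a.e. $t$, i.e. $h^i(\psi(t))=0$; by continuity of $\psi$ and of $h^i$ this extends to all $t\in[0,1]$, giving (\ref{propsing2}).

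Conversely, if an absolutely continuous lift $\psi$ with $\psi(t)\neq 0$ satisfies (\ref{propsing1})--(\ref{propsing2}), I would run the computation backward: the Hamiltonian equation forces $p(t)=\Phi(1,t)^{\top}p(1)$ (with $p(1)\neq 0$, as otherwise $\psi\equiv 0$ by uniqueness of the Cauchy problem), and then $h^i(\psi(t))=0$ translates into $p(1)\cdot D_uE^x(w)=0$ for every $w\in L^2([0,1],\R^k)$, proving that $u$ is singular.

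The only delicate point is the low regularity: $u$ is merely $L^2$, so $\Phi(t,s)$ is only absolutely continuous in its arguments and the differentiation of $E^x$ must be justified via the standard Carathéodory theory for the associated Cauchy problem (which is classical for such control systems and which I would invoke without reproducing). This regularity bookkeeping, rather than any conceptual step, is the part that needs the most care.
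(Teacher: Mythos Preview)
Your argument is correct and is in fact the standard proof of this characterization (it is essentially the one in \cite[Proposition~1.11]{riffordbook}). Note, however, that the paper does not prove Proposition~\ref{propSING} at all: it merely states the result and cites \cite{riffordbook} for the proof, so there is no ``paper's own proof'' to compare against.

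A couple of minor remarks on presentation. First, you work in a single local trivialization, but $\gamma([0,1])$ need not lie in one coordinate chart; by compactness one covers the image by finitely many charts and patches the adjoint arcs, using that both the Hamiltonian equation~\eqref{propsing1} and the annihilation condition~\eqref{propsing2} are intrinsic (independent of coordinates). Second, in the converse direction you should say explicitly why $p(1)\neq 0$: since $\psi$ takes values in $T^*M\setminus\{0\}$, the fiber component $p(t)$ never vanishes, in particular at $t=1$. These are routine points and do not affect the validity of your argument.
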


Recall that whenever $M$ has dimension three and $\Delta $ rank two, the Martinet surface is defined by
$$
\Sigma := \Bigl\{ x\in M \, \vert \,  \Delta(x) + [\Delta,\Delta](x) \neq T_xM \Bigr\},
$$
In this particular case, Proposition \ref{propSING} implies the following result. For sake of completeness we provide its proof (see \cite[Example 1.17 p. 27]{riffordbook}).

\begin{proposition}\label{PROPsingmartinet}
Let $M$ be a smooth manifold of dimension three and $\Delta$ be a totally nonholonomic distribution of rank two. Then a non-constant horizontal path $\gamma:[0,1] \rightarrow M$ is singular if and only if it is contained in $\Sigma$, that is $\gamma([0,1]) \subset  \Sigma$.
\end{proposition}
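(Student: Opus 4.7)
The plan is to deduce the statement from Proposition \ref{propSING} by working locally. Since $M$ has dimension three and $\Delta$ has rank two, every point admits a neighborhood on which $\Delta$ is generated by two linearly independent smooth vector fields $X^1, X^2$, and both ``being singular'' and ``lying in $\Sigma$'' are local/pointwise conditions, so it suffices to argue in such a neighborhood (with the control $u=(u_1,u_2)$ parametrizing $\dot\gamma$ essentially uniquely).

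For the forward implication, suppose $\gamma$ is non-constant and singular, with a non-vanishing absolutely continuous lift $\psi(t)=(\gamma(t),p(t))$ satisfying \eqref{propsing1}--\eqref{propsing2}. Differentiating the constraints $h^i(\psi(t))\equiv 0$ along the trajectory and using the standard identity $\{h^j,h^i\}=-h_{[X^j,X^i]}$ yields
$$
0=\frac{d}{dt}h^i(\psi(t))=\sum_j u_j(t)\{h^j,h^i\}(\psi(t))=-\sum_j u_j(t)\,p(t)\cdot[X^j,X^i](\gamma(t))
$$
for a.e.\ $t$ and $i=1,2$. Taking $i=1$ and $i=2$ gives $u_2(t)\,p(t)\cdot[X^1,X^2](\gamma(t))=0$ and $u_1(t)\,p(t)\cdot[X^1,X^2](\gamma(t))=0$. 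On the set where $(u_1(t),u_2(t))\neq 0$ we conclude that $p(t)$ annihilates $X^1(\gamma(t)),X^2(\gamma(t))$ and $[X^1,X^2](\gamma(t))$; since $p(t)\neq 0$ and $\dim M=3$, these three vectors cannot span $T_{\gamma(t)}M$, so $\gamma(t)\in\Sigma$. On the complementary set, where $u\equiv 0$ a.e., $\gamma$ is locally constant; non-constancy of $\gamma$ plus continuity of $\gamma$ and closedness of $\Sigma$ then propagate the inclusion to all of $[0,1]$.

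For the converse, assume $\gamma([0,1])\subset\Sigma$. The annihilator of $\Delta(\gamma(0))$ in $T^{*}_{\gamma(0)}M$ is a line, so pick $p_0\neq 0$ in it, and let $\psi(t)=(\gamma(t),p(t))$ be the solution on $[0,1]$ of the Hamiltonian system $\dot\psi=\sum_i u_i(t)\,\vec h^i(\psi)$ with $\psi(0)=(\gamma(0),p_0)$; the $p$-equation is linear, so $\psi$ is globally defined and $p(t)\neq 0$ for all $t$. Set $F_i(t):=h^i(\psi(t))=p(t)\cdot X^i(\gamma(t))$; a direct computation gives
$$
\dot F_i(t)=\sum_j u_j(t)\,p(t)\cdot[X^i,X^j](\gamma(t)).
$$
Since $\gamma(t)\in\Sigma$, we have $[X^1,X^2](\gamma(t))\in\Delta(\gamma(t))=\mathrm{Span}\{X^1(\gamma(t)),X^2(\gamma(t))\}$, so $p(t)\cdot[X^1,X^2](\gamma(t))$ is a smooth linear combination of $F_1(t)$ and $F_2(t)$. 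Hence $(F_1,F_2)$ solves a homogeneous linear ODE with $F_i(0)=0$, and uniqueness gives $F_i\equiv 0$. Thus $\psi$ satisfies the hypotheses of Proposition \ref{propSING}, and $\gamma$ is singular.

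The main technical nuisance is the bookkeeping in the forward direction around intervals on which $(u_1,u_2)=0$: the pointwise Lie-bracket argument only yields $\gamma(t)\in\Sigma$ where the control does not vanish, and one must carefully glue across the ``rest'' intervals using continuity of $\gamma$ and closedness of $\Sigma$; this is precisely where the assumption that $\gamma$ is non-constant is needed, to exclude the trivial case $u\equiv 0$ on $[0,1]$.
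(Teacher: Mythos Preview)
Your proof is correct and follows essentially the same approach as the paper's: both directions rest on Proposition~\ref{propSING}, differentiation of the constraints $h^i(\psi)\equiv 0$ to produce the relations $u_i(t)\,p(t)\cdot[X^1,X^2](\gamma(t))=0$, and (for the converse) the observation that on $\Sigma$ the bracket $[X^1,X^2]$ lies in $\Delta$, turning $(F_1,F_2)$ into a solution of a homogeneous linear ODE with zero initial data. The only cosmetic differences are that the paper argues the forward direction by contradiction (assume $\gamma(\bar t)\notin\Sigma$) rather than directly, and in the converse it builds the lift on a small interval $[0,\delta]$ and then concatenates, whereas you invoke global existence of the linear $p$-equation; also, the coefficients expressing $[X^1,X^2](\gamma(t))$ in terms of $X^1,X^2$ are merely continuous along $\gamma$ (not smooth as you wrote), but this is harmless for the ODE uniqueness step.
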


\begin{proof}[Proof of Proposition \ref{PROPsingmartinet}]
Let $\gamma:[0,1] \rightarrow M$ be a non-constant singular horizontal path. Argue by contradiction by assuming that there is some $\bar{t}\in [0,1]$ such that $\gamma(\bar{t}) \notin \Sigma$. There is a neighborhood of $\gamma(\bar{t})$ where $\Delta$ is generated by two linearly independent vector fields $X^1, X^2$. Then there are $a<b$ in $[0,1]$ and a control $u\in L^1([a,b],\R^2)$ such that $\dot{\gamma}(t) =  u_1(t) X^1(\gamma(t)) + u_2(t) X^2(\gamma(t))$ for almost every  $t \in [a,b]$.  By Proposition \ref{propSING} there is an absolutely continuous arc $\psi=(\gamma, p) : [a,b] \rightarrow T^*M \setminus \{0\}$ such that (\ref{propsing1}) is satisfied on $[a,b]$ and $h^1(\psi(t)) =h^2(\psi(t))=0$ for all $t\in [a,b]$. Taking the derivative of the latter, we get that for almost every $t$ in $[a,b]$, 
there holds  
\begin{eqnarray}\label{23july99}
0 = \frac{d}{dt}\Bigl\{ h^1(\psi(t))\Bigr\} =  u_2(t) \, h^{1,2}(\psi(t)) \quad \mbox{and} \quad 0 = \frac{d}{dt}\Bigl\{ h^2(\psi(t))\Bigr\} =  u_1(t) \, h^{2,1}(\psi(t)),
\end{eqnarray}
where $h^{i,j}$ is defined by $h^{i,j}(\psi) = p\cdot [X^j,X^i] (x)$ for every $\psi=(x,p)\in T^*M$ and any $i,j =1,2$. Since $\gamma$ is not constant and $\gamma(\bar{t})$ does not belong to $\Sigma$, we may assume in the above equalities that $u_i(t)\neq 0$ for some $i\in \{1,2\}$ and that 
$$
T_{\gamma(t))}M = \mbox{Span} \Bigl\{X^1(\gamma(t)), X^2(\gamma(t)), [X^1,X^2](\gamma(t)) \Bigr\}.
$$
Then, by (\ref{23july99}), we have $h^1(\psi(t))=h^2(\psi(t))=h^{1,2}(\psi(t))=0$ which means that $\psi(t)\neq 0$ three linearly independent tangent vectors, a contradiction.

Let us now prove that any horizontal path which is included in $\Sigma$ is singular. Let $\gamma :[0,1] \rightarrow M$ such a
path be fixed, set $\gamma(0)=x$, and consider a local frame $\{X^1,X^2\}$ for $\Delta$  in a neighborhood
$\mathcal{V}$ of $x$. Let $\delta>0$ be small enough so that $\gamma(t) \in \mathcal{V}$ for any $t\in [0,\delta]$, in such a
way that there is $u\in L^1([0,\delta];\R^2)$ satisfying
$$
\dot{\gamma}(t) =  u_1(t) X^1(\gamma(t)) + u_2(t) X^2(\gamma(t)) \qquad \mbox{a.e. } t\in [0,\delta].
$$
Taking a change of coordinates if necessary, we can assume that we work in $\R^3$. Let $p_0\in (\R^3)^* \setminus \{0\}$ be such that $p_0\cdot X^1(x) =p_0\cdot
X2(x)=0$, and let $p:[0,\delta] \rightarrow (\R^3)^*$ be the solution to the Cauchy problem
$$
\dot{p}(t) = - \sum_{i=1,2} u_i(t) \, p(t)\cdot D_{\gamma(t)} X^i \qquad \mbox{a.e. } t\in [0,\delta], \quad p(0)=p_0.
$$
Define two absolutely continuous function $h_1,h_2 :[0,\delta] \rightarrow \R$ by
$$
h_i(t) = p(t)\cdot X^i(\gamma(t)) \qquad \forall t\in [0,\delta], \quad \forall i=1,2.
$$
As above, for every $t\in [0,\delta]$ we have
$$
\dot{h}_1(t) = \frac{d}{dt} \left[p(t)\cdot X^1(\gamma(t))\right] = -u_2(t) \,p(t)\cdot [X^1,X^2] \bigl(\gamma(t)\bigr)
$$
and
$$
\dot{h}_2 (t) = u_1(t) \, p(t)\cdot \bigl[X^1,X^2\bigr](\gamma(t)).
$$
But since $\gamma(t)\in \Sigma$ for every $t$, there are two continuous functions $\lambda_1,\lambda_2:[0,\delta] \rightarrow \R$ such that
$$
\bigl[X^1,X^2\bigr](\gamma(t)) = \lambda_1(t) X^1(\gamma(t)) + \lambda_2 (t)
X^2(\gamma(t)) \qquad \forall t\in [0,\delta].
$$
This implies that the pair $(h_1,h_2)$ is a solution of the linear differential system
$$
\left\{
\begin{array}{rcl}
\dot{h}_1(t) & = & -u_2(t)\lambda_1(t) h_1(t) -u_2(t)\lambda_2(t) h_2(t) \\
\dot{h}_2(t) & = & u_1(t) \lambda_1(t) h_1(t) +u_1(t) \lambda_2(t) h_2(t).
\end{array}
\right.
$$
Since $h_1(0)=h_2(0)=0$ by construction, we deduce by the
Cauchy-Lipschitz Theorem that $h_1(t)=h_2(t)=0$ for any $t\in [0,\delta]$. In that way, we have constructed an absolutely continuous arc $p:[0,\delta] \rightarrow \bigl(\R^3\bigr)^* \setminus \{0\}$ such that $\psi:=(\gamma,p)$ satisfies (\ref{propsing1})-(\ref{propsing2}) on $[0,\delta]$. We can repeat this construction on a new interval of the form $[\delta,2\delta]$ (with initial condition $p(\delta)$)  and finally obtain an absolutely continuous arc satisfying (\ref{propsing1})-(\ref{propsing2})  on $[0,1]$.  By Proposition \ref{propSING}, we conclude that $\gamma$ is singular.
\end{proof}

\section{Divergence formulas}

Let $M$ be a smooth manifold of dimension $n\geq 2$ and $\omega$ be a volume form on $M$. Given a smooth vector field $Z$ on $M$, we recall that its divergence with respect to $\omega$ is defined by $\left( \mbox{div}^{\omega} Z \right)\, \omega = L_Z \omega$, where $L_Z$ denotes the Lie derivative along $Z$. In other words, if we denote by $\varphi_t$ the flow of $Z$, then for every open set $\Omega$ with finite volume, we have
\begin{eqnarray}\label{formuladiv}
\frac{d}{dt} \Bigl\{ \mbox{vol}^{\omega} \left( \varphi_t(\Omega) \right) \Bigr\}_{\vert t=0}= \int_{\Omega} \mbox{div}^{\omega}_x Z \, \omega(x),
\end{eqnarray}
where $\mbox{vol}^{\omega}$ is the measure associated with $\omega$ (given by $\mbox{vol}^{\omega}=\int \, \omega$). Therefore, if $\alpha: M \rightarrow (0,+\infty)$ and $\beta:M \rightarrow \R$ are smooth functions then we check easily that
\begin{eqnarray}\label{9aout1}
\mbox{div}^{\alpha \omega} ( Z ) = \mbox{div}^{ \omega} \left(  Z \right) + \frac{1}{\alpha} \, \left( Z\cdot \alpha\right) \quad \mbox{and} \quad \mbox{div}^{\omega} ( \beta \, Z ) = \beta \mbox{div}^{ \omega} (  Z ) +  Z\cdot \beta,
\end{eqnarray}
where we use the notation $Z \cdot \alpha= L_Z \alpha$.  One of the main tool of the present paper is the following formula.

\begin{proposition}\label{PROPintdiv}
Let $S$ be a Borel set with finite volume such that $\varphi_t$ is well-defined on $S$ for every $t\geq 0$, then there holds 
\begin{eqnarray}\label{volt}
\mbox{vol}^{\omega}  \left( \varphi_t(S) \right) = \int_S \exp \left(   \int_0^t \mbox{div}_{\varphi_s(x)}^{\omega} Z \, ds    \right) \omega(x).
\end{eqnarray}
\end{proposition}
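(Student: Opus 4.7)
The plan is to establish the identity pointwise via a pull-back/Jacobian argument and then integrate. Write $\varphi_t^{\ast} \omega = J_t \cdot \omega$ for a (smooth, positive) function $J_t : M \to \R$, well-defined because $\omega$ is a volume form and $\varphi_t$ is a local diffeomorphism wherever it is defined. By the classical change-of-variables formula for differential forms,
\[
\mbox{vol}^{\omega}\bigl( \varphi_t(S)\bigr) = \int_{\varphi_t(S)} \omega = \int_S \varphi_t^{\ast} \omega = \int_S J_t(x) \, \omega(x),
\]
so the proposition reduces to showing
\[
J_t(x) = \exp\left( \int_0^t \mbox{div}^{\omega}_{\varphi_s(x)} Z \, ds \right) \quad \text{for every } x \in S, \, t \geq 0.
\]

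First I would derive an ODE in $t$ for $J_t(x)$. Using the semigroup property of the flow, $\varphi_{t+s} = \varphi_s \circ \varphi_t$, together with $\frac{d}{ds}\bigr|_{s=0} \varphi_s^{\ast} \omega = L_Z \omega = (\mbox{div}^{\omega} Z) \, \omega$ (which is the very definition of $\mbox{div}^{\omega} Z$), one obtains
\[
\frac{d}{dt} \varphi_t^{\ast} \omega = \varphi_t^{\ast}(L_Z \omega) = \varphi_t^{\ast}\bigl( (\mbox{div}^{\omega} Z) \, \omega \bigr) = \bigl( (\mbox{div}^{\omega} Z) \circ \varphi_t \bigr) \, \varphi_t^{\ast} \omega.
\]
Dividing by $\omega$ yields the scalar linear ODE
\[
\frac{d}{dt} J_t(x) = \bigl( \mbox{div}^{\omega}_{\varphi_t(x)} Z \bigr) \, J_t(x), \quad J_0(x) = 1,
\]
whose unique solution is precisely $J_t(x) = \exp\bigl( \int_0^t \mbox{div}^{\omega}_{\varphi_s(x)} Z \, ds \bigr)$. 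Plugging this back into the change-of-variables identity gives formula~\eqref{volt}.

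There is essentially no hard step here; the only point requiring a little care is justifying the change of variables \emph{globally on $S$} when $S$ is merely Borel rather than open. This is handled by approximating $S$ from inside by relatively compact open sets on which $\varphi_t$ is a diffeomorphism onto its image (a consequence of the standing hypothesis that the flow is defined on $S$ up to time $t$) and passing to the limit by monotone convergence, using that $J_t$ is continuous and positive. The pointwise identity for $J_t$ is local and smooth, so it is unaffected by measure-theoretic regularity of $S$. Equivalently, one may deduce \eqref{volt} from the already-known infinitesimal identity~\eqref{formuladiv} applied to the open sets $\varphi_s(\Omega)$ for $\Omega$ open with finite volume: one obtains $\frac{d}{ds} \mbox{vol}^{\omega}(\varphi_s(\Omega)) = \int_{\Omega} (\mbox{div}^{\omega} Z) \circ \varphi_s \cdot J_s \, \omega$, and Fubini plus the scalar ODE above finish the argument.
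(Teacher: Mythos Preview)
Your proof is correct and follows essentially the same approach as the paper: both introduce the Jacobian $J_t$ (the paper calls it $f_t$) via $\varphi_t^{\ast}\omega = J_t\,\omega$, reduce the formula to the change-of-variables identity $\mbox{vol}^{\omega}(\varphi_t(S))=\int_S J_t\,\omega$, and then show $J_t$ satisfies the linear ODE $\partial_t J_t = (\mbox{div}^{\omega}Z\circ\varphi_t)\,J_t$ with $J_0=1$. The only cosmetic difference is in how the ODE is obtained: you derive it pointwise from $\frac{d}{dt}\varphi_t^{\ast}\omega = \varphi_t^{\ast}(L_Z\omega)$, while the paper first integrates over arbitrary open sets and then infers the pointwise identity from equality of integrals; your route is slightly more direct but the substance is the same.
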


\begin{proof}[Proof of Proposition \ref{PROPintdiv}]
Let us first assume that $S$ is an open subset of $M$. Then one the one hand thanks to (\ref{formuladiv}) we have for every $t\geq 0$
\begin{multline*}
\frac{d}{dt} \Bigl\{ \mbox{vol}^{\omega} \left( \varphi_t(S) \right) \Bigr\} = \frac{d}{ds} \Bigl\{ \mbox{vol}^{\omega} \left( \varphi_{s}(\varphi_t(S)) \right) \Bigr\}_{\vert s=0} = \int_{\varphi_t(S)} \mbox{div}^{\omega}_y Z \, \omega(y) \\
= \int_{S} \mbox{div}^{\omega}_{\varphi_t(x)} Z \, \left(\varphi_t^*\omega\right)(x) =  \int_{S} \mbox{div}^{\omega}_{\varphi_t(x)} Z \, f_t(x) \, \omega (x),
\end{multline*}
where we did the change of variable $y=\varphi_t(x)$. On the other hand, doing the same change of variable yields
\begin{eqnarray}\label{formulaproofdiv}
 \mbox{vol}^{\omega} \left( \varphi_t(S) \right) =  \int_{\varphi_t(S)} 1 \, \omega(y) =  \int_{S}  f_t(x) \, \omega (x).
\end{eqnarray}
which implies by differentiation under the integral that
$$
\frac{d}{dt} \Bigl\{ \mbox{vol}^{\omega} \left( \varphi_t(S) \right) \Bigr\}  =  \int_{S}  \frac{\partial f_t}{\partial t}(x) \, \omega (x).
$$
Since the above formulas hold for any open sets, we infer that for every $x\in M$, $ \frac{\partial f_t}{\partial t}(x)=  \mbox{div}^{\omega}_{\varphi_t(x)} Z\, f_t(x)$ with $f_0(x)=1$. We conclude easily. 
\end{proof}

If $g$ is  a smooth Riemannian metric on $M$ then we recall that the volume form $\mbox{vol}^g$ associated with $g$ is defined in a set of positively oriented local coordinates by 
$$
\mbox{vol}^g =\sqrt{\det (g_{ij})} \, dx_1 \wedge \cdots \wedge dx_n,
$$
where $g$ reads $g=\sum_{i,j=1}^n g_{ij} dx_i \, dx_j$. In the sequel, the divergence operator $\mbox{div}^{vol^g}$ associated with $\mbox{vol}^g$ will be denoted by $\mbox{div}^g$. Given a  smooth submanifold $\Sigma$ of $M$, we can equip it with the Riemannian metric induced by $g$, in the sequel we denote by $\mbox{vol}^{\Sigma}$ and $\mbox{div}^{\Sigma}$ the corresponding volume and divergence operator on $\Sigma$. The following result plays also a major role in the present paper.

\begin{proposition}\label{PROPdivcodim1}
Let $\Sigma$ be a smooth hypersurface of $M$ and  $U$ be an open subset of $M$ such that 
$$
\Sigma \cap U := \Bigl\{ x \in U \, \vert \, h(x) = 0, \, d_xh \neq 0 \Bigr\},
$$
for some smooth function $h:U \rightarrow \R$. Then there holds $\mbox{vol}^{\Sigma}=i_N (\mbox{vol}^g)$ on $\Sigma\cap U$, where $N:= \nabla h/|\nabla h|$ and $i_N$ denotes the interior product with $N$. Moreover, for every smooth vector field $Z$ on $U$ satisfying $Z\cdot h=0$ on $U$, the restriction $Z$ of $Z$  to $\Sigma \cap U$ is tangent to $\Sigma \cap U$ and it satisfies
\begin{eqnarray}\label{24july1}
\mbox{div}_x^{\Sigma} Z =  \mbox{div}_x^{g} Z  + \frac{\left( Z\cdot |\nabla h|^2\right) (x)}{2|\nabla_x h|^2} \qquad \forall x \in \Sigma \cap U.
\end{eqnarray}
\end{proposition}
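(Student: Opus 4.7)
\textbf{Proof proposal for Proposition \ref{PROPdivcodim1}.}

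The plan is to prove the two statements in order. For the identity $\mbox{vol}^{\Sigma} = i_N(\mbox{vol}^g)$ on $\Sigma \cap U$, I would fix a point $p \in \Sigma \cap U$ and choose a positively oriented $g$-orthonormal frame $(e_0, e_1, \dots, e_{n-1})$ at $p$ with $e_0 = N(p)$ and $e_1, \dots, e_{n-1}$ tangent to $\Sigma$. Then $\mbox{vol}^g(N, e_1, \dots, e_{n-1}) = 1$, so $i_N(\mbox{vol}^g)(e_1, \dots, e_{n-1}) = 1$, which matches the value of $\mbox{vol}^{\Sigma}$ on the induced orthonormal basis of $T_p\Sigma$. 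Since both sides are $(n-1)$-forms on $\Sigma$ that agree on one (hence every) positively oriented orthonormal basis, they coincide.

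For the divergence formula, the idea is to extend $\mbox{vol}^{\Sigma}$ to the smooth $(n-1)$-form
$$\alpha := \frac{1}{|\nabla h|}\, i_{\nabla h}(\mbox{vol}^g)$$
defined on all of $U$ (using $d_xh \neq 0$), whose restriction to $\Sigma \cap U$ equals $\mbox{vol}^{\Sigma}$ by the first part. Since $Z \cdot h = 0$, the flow of $Z$ preserves $\Sigma$ locally, hence $Z$ is tangent to $\Sigma \cap U$ and $L_Z(\mbox{vol}^{\Sigma}) = (L_Z \alpha)|_{\Sigma \cap U}$. I would then expand $L_Z \alpha$ via the Leibniz rule and the identity $L_Z(i_Y \omega) = i_{[Z,Y]}\omega + i_Y(L_Z \omega)$:
\begin{align*}
L_Z \alpha &= Z\!\cdot\!\left(\frac{1}{|\nabla h|}\right) i_{\nabla h}(\mbox{vol}^g) + \frac{1}{|\nabla h|}\, i_{[Z,\nabla h]}(\mbox{vol}^g) + \frac{1}{|\nabla h|}\, i_{\nabla h}(L_Z \mbox{vol}^g).
\end{align*}
The last term equals $\mbox{div}^g(Z)\cdot \alpha$. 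The first term contributes $-\frac{Z\cdot|\nabla h|^2}{2|\nabla h|^2}\,\alpha$ on $\Sigma \cap U$.

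The remaining step, which I expect to be the only subtle point, is to show that on $\Sigma \cap U$ the middle term equals $\frac{Z\cdot|\nabla h|^2}{|\nabla h|^2}\,\mbox{vol}^{\Sigma}$. For this I would decompose $[Z,\nabla h] = fN + T$ with $T$ tangent to $\Sigma$; since $i_T(\mbox{vol}^g)$ pulled back to $\Sigma$ vanishes (three tangent inputs span at most a $(n-1)$-plane in $T\Sigma$... actually $T$ and the $n-1$ tangent test vectors are all in $T\Sigma$, so the pullback vanishes), only the normal component matters and yields $f\cdot \mbox{vol}^{\Sigma}$. To compute $f$, I apply $dh$: using $Z\cdot h \equiv 0$,
$$dh\bigl([Z,\nabla h]\bigr) = Z\!\cdot\!(\nabla h \cdot h) - \nabla h\!\cdot\!(Z\cdot h) = Z\!\cdot\!|\nabla h|^2,$$
while $dh(fN + T) = f\,|\nabla h|$, so $f = (Z\cdot|\nabla h|^2)/|\nabla h|$. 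Dividing by $|\nabla h|$ gives the asserted coefficient.

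Combining the three contributions yields
$$L_Z(\mbox{vol}^{\Sigma}) = \left(\mbox{div}^g(Z) + \frac{Z\cdot|\nabla h|^2}{2|\nabla h|^2}\right)\mbox{vol}^{\Sigma} \quad \text{on } \Sigma \cap U,$$
and since $L_Z(\mbox{vol}^{\Sigma}) = \mbox{div}^{\Sigma}(Z)\,\mbox{vol}^{\Sigma}$ by definition of $\mbox{div}^{\Sigma}$, the identity \eqref{24july1} follows. The conceptual heart of the argument is the calculation of the normal component of $[Z,\nabla h]$, which accounts for the discrepancy between the ambient divergence and the intrinsic one because the unit-normal rescaling $1/|\nabla h|$ varies along $\Sigma$.
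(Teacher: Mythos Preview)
Your proof is correct and complete. The first part matches the paper's argument essentially verbatim. For the divergence formula, however, you take a genuinely different route from the paper.

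The paper's proof is connection-based: it writes both divergences as traces of covariant derivatives with respect to an adapted orthonormal frame $(e_1,\dots,e_{n-1},N)$, obtaining
\[
\mbox{div}^{\Sigma}_x Z = \mbox{div}^{g}_x Z - g\bigl(\nabla_{N} Z, N\bigr),
\]
and then computes the correction term by differentiating the identity $g(Z,\nabla h)=0$ along $\nabla h$ and invoking the symmetry of the Hessian of $h$.

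Your argument is form-based: you extend $\mbox{vol}^{\Sigma}$ to the ambient $(n-1)$-form $\alpha = |\nabla h|^{-1}\, i_{\nabla h}(\mbox{vol}^g)$, then expand $L_Z\alpha$ via the identity $L_Z i_Y = i_{[Z,Y]} + i_Y L_Z$. The three resulting terms are handled cleanly; the only nontrivial step is identifying the normal component of $[Z,\nabla h]$, which you do correctly via $dh([Z,\nabla h]) = Z\cdot|\nabla h|^2$. This approach avoids any appeal to the Levi-Civita connection or the Hessian, trading them for a slightly longer bookkeeping of Lie-derivative identities. It has the mild advantage of making transparent exactly where the assumption $Z\cdot h \equiv 0$ (on all of $U$, not just $\Sigma$) is used---both in the naturality step $L_Z(\mbox{vol}^{\Sigma}) = (L_Z\alpha)|_{\Sigma}$ and in the computation of $dh([Z,\nabla h])$. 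The paper's proof is terser but relies on the reader supplying the Hessian-symmetry step to finish.
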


\begin{proof}[Proof of Proposition \ref{PROPdivcodim1}]
The first part follows easily from the fact that the restriction of $\nabla h$ to $\Sigma$ is orthogonal to $\Sigma$. To prove (\ref{24july1}), we can notice that the divergences $\mbox{div}^g$ and $\mbox{div}^{\Sigma}$ can be expressed in term of the corresponding Levi-Civita connections $\nabla$ and $\nabla^{\Sigma}$. If $(e_1, \ldots, e_{n-1})$ is an orthonormal basis of $T_{x}\Sigma$ at $x\in \Sigma \cap U$, then the family $(e_1, \ldots, e_{n})$ with $e_n:= N(x) = \nabla_xh/ | \nabla_x h|$ is an orthonormal basis of $T_xM$ and we have 
$$
\mbox{div}^{\Sigma}_x Z = \sum_{i=1}^{n-1} \nabla_{e_i}^{\Sigma} Z(x) \cdot e_i  \quad \mbox{and} \quad \mbox{div}^{g}_x Z = \sum_{i=1}^{n} \nabla_{e_i} Z(x) \cdot e_i. 
$$
Since the connection $\nabla^{\Sigma}$ is given by the orthogonal projection of $\nabla$ on $T_x\Sigma$, we infer that
$$
\mbox{div}^{\Sigma}_x Z =  \mbox{div}^{g}_x Z -  \nabla_{e_n} Z(x) \cdot e_n. 
$$
The fact that $Z\cdot h=0$ on $U$ yields $\nabla_{e_n} Z(x) \cdot \nabla_xh + Z(x) \cdot \nabla_{e_n} (\nabla h)(x)=0$. We conclude easily.
\end{proof}

\section{The Martinet analytic space}\label{secMartinetspace}

In this section we show that the Martinet set $\Sigma$ has the structure of an analytic space, denoted by $\Sigma_{\Delta}$, whose singular set has codimension two. We recall that an analytic space is a pair  $(X,\mathcal{O}_X:=\mathcal{O}_M / \mathcal{I})$ where $X $ is an analytic set in $M$, $\mathcal{O}_M$ denotes the sheaf of analytic functions over $M$ and $\mathcal{I}$ is a coherent ideal sheaf (of $\mathcal{O}_M$) with support $X$ (see, e.g. \cite[Definition 1.6]{h73} or \cite{t71}). When $\mathcal{I}$ is a principal ideal sheaf, the singular set of $(X, \mathcal{O}_X)$ is the set of points where $\mathcal{I}$ has order two \cite[Definition 5.6]{h73} (see subsection \ref{SSECmanifoldcorners}).

Given a vector bundle $E$, we denote by $\underline{E}$ the sheaf of analytic sections of $E$. Following \cite[p. 281]{baumbott}, a distribution $\Delta$ is analytic if $\underline{\Delta}$ is a coherent subsheaf of $Der_M$, where $Der_M$ is the sheaf of all derivations over $M$. In this setting, we define a coherent subsheaf $[\underline{\Delta}, \underline{\Delta}]$ of $Der_M$, where the stalks of $[\underline{\Delta}, \underline{\Delta}]$ at a point $x\in M$ are generated by the set:
$$
\Bigl\{ [X,Y] \, \vert \, X,Y \in  \underline{\Delta}_x \Bigr\}
$$

\begin{lemma}\label{LEMAS1}
Suppose that $\Delta$ is a rank-two analytic distribution. The sheaf $[\underline{\Delta}, \underline{\Delta}]$ is coherent, contains $\underline{\Delta}$ and is everywhere locally generated by three derivations. More precisely, given a point $x\in M$ and two generators $X$ and $Y$ of $\underline{\Delta}_x$, the derivations $X$, $Y$ and $[X,Y]$ are generators of $[\underline{\Delta}, \underline{\Delta}]_x$.  
\label{lem:TechnicalAlgebraicProp1}
\end{lemma}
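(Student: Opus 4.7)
The plan is to fix the point $x \in M$ and, given two generators $X, Y$ of $\underline{\Delta}_x$, extend them to analytic sections generating $\underline{\Delta}$ on a whole neighborhood $U \ni x$. The key preliminary observation is that Nakayama's lemma applied to the rank-two locally free sheaf $\underline{\Delta}$ forces $X(x), Y(x)$ to be a basis of $\Delta_x \subset T_x M$, so after possibly shrinking $U$ we may assume $X(x'), Y(x')$ remain linearly independent throughout $U$. I will then show that the three sections $X, Y, [X, Y]$ generate $[\underline{\Delta}, \underline{\Delta}]_{x'}$ for every $x' \in U$, yielding finite generation as a sheaf on $U$; coherence follows immediately because $[\underline{\Delta}, \underline{\Delta}]$ is a locally finitely generated $\mathcal{O}_M$-submodule of the coherent sheaf $Der_M$ (coherence of $\mathcal{O}_M$ being Oka's theorem).

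First I would verify the inclusion $[\underline{\Delta}, \underline{\Delta}]_{x'} \subseteq \mathcal{O}_{M,x'}\, \langle X, Y, [X, Y] \rangle$. Any two sections $X', Y' \in \underline{\Delta}_{x'}$ decompose as $X' = aX + bY$ and $Y' = cX + dY$ with $a, b, c, d \in \mathcal{O}_{M,x'}$. Applying the Leibniz identity $[fA, gB] = fg\,[A, B] + f (A \cdot g)\, B - g(B \cdot f)\, A$ and expanding, a direct computation yields
\begin{equation*}
[X', Y'] = (ad - bc)\,[X, Y] + \alpha\, X + \beta\, Y,
\end{equation*}
where $\alpha, \beta \in \mathcal{O}_{M,x'}$ are polynomial expressions in $a, b, c, d$ and their directional derivatives along $X$ and $Y$. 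Since brackets of this form generate the stalk by definition, the inclusion follows.

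For the reverse inclusion (and simultaneously the containment $\underline{\Delta} \subset [\underline{\Delta}, \underline{\Delta}]$), the key step is to place $X$ and $Y$ themselves in $[\underline{\Delta}, \underline{\Delta}]_{x'}$. Since $X(x') \neq 0$, the analytic flow-box theorem provides an analytic function $f$ near $x'$ with $X \cdot f = 1$; then $f X \in \underline{\Delta}_{x'}$ because $\underline{\Delta}$ is an $\mathcal{O}_M$-module, and
\begin{equation*}
[f X, X] = f\,[X, X] - (X \cdot f)\, X = -X
\end{equation*}
exhibits $X$ as a bracket of two sections of $\underline{\Delta}_{x'}$. Applying the same argument to $Y$, and using $[X,Y] \in [\underline{\Delta}, \underline{\Delta}]_{x'}$ trivially, gives the reverse inclusion.

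The main subtlety I anticipate is not any of the stalk-level arguments themselves, which are elementary, but rather the passage from stalks to a global finite generation statement on $U$, which is what is actually needed for coherence. This is handled uniformly: the sections $X, Y, [X, Y]$ are global analytic sections on $U$, the expansion formula of the second paragraph is valid at every $x' \in U$, and the flow-box construction of the third paragraph can be performed at every point of $U$ since $X$ and $Y$ are nowhere zero there. Hence $X, Y, [X,Y] \in \Gamma\bigl(U, [\underline{\Delta}, \underline{\Delta}]\bigr)$ generate $[\underline{\Delta}, \underline{\Delta}]|_U$ stalk-by-stalk, and the coherence conclusion follows from the standard fact that a locally finitely generated subsheaf of a coherent sheaf is coherent.
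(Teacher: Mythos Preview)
Your proof is correct and follows essentially the same approach as the paper's: both arguments pick local generators $X,Y$ of $\underline{\Delta}$, use the Leibniz rule to show any bracket lies in the $\mathcal{O}_M$-span of $X,Y,[X,Y]$, exploit the nonvanishing of $X$ and $Y$ to exhibit $X,Y$ themselves as brackets of sections of $\underline{\Delta}$, and then invoke Oka's theorem for coherence. The only cosmetic difference is in the bracket trick: the paper writes $[X,h_X Y]=(X\cdot h_X)\,Y+h_X[X,Y]$ with $X\cdot h_X$ a unit to extract $Y$, whereas you use the slightly more direct identity $[fX,X]=-X$ with $X\cdot f=1$; both rely on the same underlying fact that a nonvanishing analytic vector field admits an analytic first coordinate.
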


\begin{proof}[Proof of Lemma \ref{LEMAS1}]
Fix a point $x\in M$ and an open neighborhood $\mathcal{V}$ of $x$. Since $\underline{\Delta}$ is coherent, apart from shrinking $\mathcal{V}$, there two regular analytic vector-fields $X$ and $Y$ defined on $\mathcal{V}$ which locally generate the restriction of $\underline{\Delta}$ to $\mathcal{V}$ (which we denote by $\underline{\Delta} \cdot \mathcal{O}_{\mathcal{V}}$). Since $X$ and $Y$ are regular, there exists two analytic functions $h_X$ and $h_Y$ defined in $\mathcal{V}$ such that $X\cdot h_X$ and $Y \cdot h_Y$ are units, that is $X\cdot h_X(y) \neq 0$ and $Y \cdot h_Y(y)\neq 0$ for all $y\in \mathcal{V}$. Note that:
\[
[X,h_X Y] = \left(X\cdot h_X\right) Y + h_X [X,Y] \quad \text{ and } \quad [ Y, h_YX] = \left(Y\cdot h_Y\right) X - h_Y [X,Y]
\]
belong to $[\underline{\Delta}, \underline{\Delta}] \cdot \mathcal{O}_{\mathcal{V}}$, which implies that $X$, $Y$ and $[X,Y]$ belongs to $[\underline{\Delta}, \underline{\Delta}] \cdot \mathcal{O}_{\mathcal{V}}$. Since all derivations in $\underline{\Delta} \cdot \mathcal{O}_{\mathcal{V}}$ are $\mathcal{O}_{\mathcal{V}}$-linear combinations of $X$ and $Y$, it is clear that these three derivations generate $[\underline{\Delta}, \underline{\Delta}]\cdot \mathcal{O}_{\mathcal{V}}$ (in particular, we can already conclude that $\underline{\Delta} \subset [\underline{\Delta},\underline{\Delta}]$). This implies that $[\underline{\Delta}, \underline{\Delta}] \cdot \mathcal{O}_{\mathcal{V}}$ is of finite type and, by Oka's Theorem \cite[Chapitre 2, Corollaire 6.9]{Tougeron}, it is coherent. Since the choice of $x$ was arbitrary, the proof is complete. 
\end{proof}

We now consider a first (non-reduced) generator of the Martinet surface $\Sigma$. Let us define the (auxiliary) ideal sheaf $\mathcal{J}_{\Delta}$ whose stalk at a point $x\in M$, is generated by the following set
\[
\left\{ det \left[\begin{matrix}

X\cdot f & X \cdot g & X \cdot h\\
Y \cdot f  & Y \cdot g & Y \cdot h\\
Z \cdot f  & Z \cdot g & Z \cdot h\\

\end{matrix} \right]   ; \, X,\,Y,\,Z\in  [\underline{\Delta}, \underline{\Delta}]_x \text{ and } f,\, g, \, h \in \mathcal{O}_{x} \right\}
\]
where $\mathcal{O}_{x}$ is the localization of $\mathcal{O}_M$ at the point $x$. 

\begin{lemma}
The ideal sheaf $\mathcal{J}_{\Delta}$ is principal and coherent. Furthermore its support is equal to the Martinet set $\Sigma$, that is the set of points which vanish over all functions in $\mathcal{J}_{\Delta}$ is equal to $\Sigma$.
\label{LEMAS15}
\end{lemma}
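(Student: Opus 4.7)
The plan is to fix local coordinates $(x_1,x_2,x_3)$ near an arbitrary point $\bar{x}\in M$, write down a local analytic generator of $\underline{\Delta}$, and reduce every generator of $\mathcal{J}_{\Delta,\bar x}$ to a single ``normal form'' using the multilinearity of the determinant together with Lemma \ref{LEMAS1}. Concretely, pick analytic vector fields $X,Y$ generating $\underline{\Delta}$ in a neighborhood of $\bar{x}$; by Lemma \ref{LEMAS1}, the stalk $[\underline{\Delta},\underline{\Delta}]_{\bar x}$ is generated as an $\mathcal{O}_{\bar x}$-module by $X,Y,[X,Y]$. The determinant defining $\mathcal{J}_{\Delta,\bar x}$ is $\mathcal{O}_{\bar x}$-multilinear and alternating in its rows, so expanding an arbitrary triple $X',Y',Z'\in[\underline{\Delta},\underline{\Delta}]_{\bar x}$ in the basis $\{X,Y,[X,Y]\}$ collapses the expression to a single (up to sign) determinant with rows $X,Y,[X,Y]$, multiplied by a scalar in $\mathcal{O}_{\bar x}$.

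Then I would use that, since $\dim M =3$, the matrix factors as a product
\[
\begin{pmatrix} X\cdot f & X\cdot g & X\cdot h\\ Y\cdot f & Y\cdot g & Y\cdot h\\ [X,Y]\cdot f & [X,Y]\cdot g & [X,Y]\cdot h\end{pmatrix}
= V\cdot D,
\]
where $V$ is the $3\times 3$ matrix whose rows are the coordinate components of $X,Y,[X,Y]$ and $D$ is the Jacobian $(\partial f/\partial x_i,\partial g/\partial x_i,\partial h/\partial x_i)$. Taking the determinant gives
\[
\det\bigl(X,Y,[X,Y]\bigr)\cdot \mathrm{Jac}(f,g,h).
\]
Choosing $(f,g,h)=(x_1,x_2,x_3)$ shows that the analytic function $\Phi:=\det(X,Y,[X,Y])$ lies in $\mathcal{J}_{\Delta,\bar x}$; conversely, every generator of $\mathcal{J}_{\Delta,\bar x}$ is $\Phi$ multiplied by an element of $\mathcal{O}_{\bar x}$. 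Hence $\mathcal{J}_{\Delta}\cdot\mathcal{O}_{\mathcal{V}}=\Phi\cdot\mathcal{O}_{\mathcal{V}}$ on a neighborhood $\mathcal{V}$ of $\bar x$, which is principal, finitely generated, and therefore coherent by Oka's theorem \cite[Chapitre 2, Corollaire 6.9]{Tougeron}. The (already established) independence of the construction from the choice of generators $X,Y$ of $\underline{\Delta}$ and of the local coordinates follows because changing $(X,Y)$ by an invertible matrix $(a,b;c,d)$ and changing coordinates multiplies $\Phi$ by the unit $(ad-bc)^2$ and by the Jacobian of the coordinate change, both of which are units of $\mathcal{O}_{\bar x}$. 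Thus $\mathcal{J}_{\Delta}$ is a globally well-defined principal coherent ideal sheaf of $\mathcal{O}_M$.

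Finally, for the support: since $\Phi$ is a local generator, $\bar x\in\mathrm{Supp}(\mathcal{J}_{\Delta})$ if and only if $\Phi(\bar x)=\det(X(\bar x),Y(\bar x),[X,Y](\bar x))=0$. Because $X,Y$ span $\Delta$ and $\Delta$ has constant rank two, $X(\bar x),Y(\bar x)$ are always linearly independent; hence the vanishing of $\Phi$ at $\bar x$ is equivalent to $[X,Y](\bar x)\in\mathrm{Span}(X(\bar x),Y(\bar x))=\Delta(\bar x)$, i.e., to $\Delta(\bar x)+[\Delta,\Delta](\bar x)\neq T_{\bar x}M$. This is precisely the condition $\bar x\in\Sigma$.

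The only place I expect any delicacy is the reduction step verifying that the apparent $\mathcal{O}_{\bar x}$-combinations of determinants with entries in $[\underline{\Delta},\underline{\Delta}]_{\bar x}$ really do land in the $\mathcal{O}_{\bar x}$-multiple of the single determinant $\Phi$; this is a purely formal consequence of multilinearity plus the rank-three phenomenon $n=3$ forcing a single $3\times 3$ Cauchy--Binet minor, but one should be careful to track that the Jacobian factor $\mathrm{Jac}(f,g,h)$ absorbs all dependence on $(f,g,h)$ so that $\Phi$ alone controls the ideal.
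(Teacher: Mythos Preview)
Your proof is correct and essentially follows the route the paper sketches, though you have filled in considerably more detail than the paper itself provides. The paper's own proof merely says ``Following the same kind of argument given in Lemma~\ref{LEMAS1}, it is not difficult to show that $\mathcal{J}_{\Delta}$ is a principal and coherent ideal sheaf,'' and then identifies the support by noting that a point lies in it iff no triple of local sections of $[\underline{\Delta},\underline{\Delta}]$ spans $T_xM$, concluding via Lemma~\ref{LEMAS1}. Your explicit Cauchy--Binet-type factorization $V\cdot D$ and the resulting identification of the single local generator $\Phi=\det(X,Y,[X,Y])$ make concrete exactly what the paper leaves implicit; the transition computation giving the unit $(ad-bc)^2$ is also a nice touch that the paper does not spell out.
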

\begin{proof}[Proof of Lemma \ref{LEMAS15}]
Following the same kind of argument given in Lemma \ref{LEMAS1}, it is not difficult to show that $\mathcal{J}_{\Delta}$ is a principal and coherent ideal sheaf. Next, note that a point $x$ is in the support of $\mathcal{J}_{\Delta}$ if and only if for all locally defined derivations $X$, $Y$ and $Z$ in $[\underline{\Delta},\underline{\Delta}]_x$, the linear space generated by $\{X(x),Y(x),Z(x)\}$ is different from $T_xM$. We easily conclude from Lemma \ref{LEMAS1}.
\end{proof}

Finally, we consider the ideal sheaf $\mathcal{I}_{\Delta} := \sqrt{\mathcal{J}_{\Delta}}$. 

\begin{lemma}\label{LEMAS2}
The ideal sheaf $\mathcal{I}_{\Delta}$ is principal, coherent and its support is the Martinet surface $\Sigma$. Furthermore, the singular set of $\mathcal{I}_{\Delta}$ ({\it i.e.} the set of points where $\mathcal{I}$ has order at least two) has codimension two.
\end{lemma}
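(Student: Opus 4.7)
The plan is to derive every assertion about $\mathcal{I}_\Delta$ from the corresponding property of $\mathcal{J}_\Delta$ by exploiting the unique-factorization structure of the local rings $\mathcal{O}_x$ of real-analytic germs. First I would work locally around a fixed point $x_0\in M$: by Lemma \ref{LEMAS15} there is a local generator $j$ of $\mathcal{J}_{\Delta,x_0}$, and since $\mathcal{O}_{x_0}$ is a UFD it factorises as $j=u\,p_1^{a_1}\cdots p_k^{a_k}$ with $u$ a unit and $p_1,\dots,p_k$ pairwise distinct irreducibles. The radical of $(j)$ in $\mathcal{O}_{x_0}$ is then the principal ideal generated by $h_{x_0}:=p_1\cdots p_k$, and I would pick an open neighbourhood $U_{x_0}$ on which this factorisation persists and declare $h_{x_0}$ the local generator of $\mathcal{I}_\Delta$.

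Then I would promote this local construction to a globally well-defined principal coherent ideal sheaf. The compatibility relation $h_{x_0}=\xi\,h_{x_1}$ on overlaps, for an invertible $\xi$, follows from the uniqueness up to units of the square-free part in a UFD. Coherence in the real-analytic category is the delicate step; I would invoke the real-analytic Nullstellensatz of Risler, which guarantees that the radical of a principal coherent ideal sheaf is still coherent. The support statement is then automatic, since taking the radical does not alter the zero locus and hence the support of $\mathcal{I}_\Delta$ coincides with that of $\mathcal{J}_\Delta$, which is $\Sigma$ by Lemma \ref{LEMAS15}.

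For the codimension-two statement I would argue directly with the reduced local generator $h=p_1\cdots p_k$. The expected obstacle here is minimal, being essentially the classical fact that the singular locus of a reduced analytic hypersurface has codimension at least one in the hypersurface. Concretely, a direct computation gives
\[
dh = \sum_{i=1}^k \Big(\prod_{j\ne i} p_j\Big)\,dp_i,
\]
so at any $y\in U_{x_0}$ with $h(y)=0$ some $p_i$ vanishes at $y$, say $p_1(y)=0$. Either some other $p_j$ with $j\ne 1$ also vanishes at $y$, in which case $y$ lies in the intersection of two distinct irreducible hypersurfaces and therefore in a set of codimension at least two; or all terms with $i\ne 1$ vanish at $y$, in which case $dh(y)=(p_2\cdots p_k)(y)\,dp_1(y)$, so $dh(y)=0$ forces $dp_1(y)=0$ and $y$ lies in the singular locus of the irreducible analytic hypersurface $\{p_1=0\}$, which again has codimension at least two in $M$. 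The main technical point in the whole argument is the global coherence of $\sqrt{\mathcal{J}_\Delta}$: in the real-analytic setting this is not automatic as it would be over $\mathbb{C}$, but it is covered precisely by the principality hypothesis on $\mathcal{J}_\Delta$ combined with the cited Nullstellensatz.
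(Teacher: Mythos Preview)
Your argument is correct and follows the same route as the paper: principality from the UFD structure of $\mathcal{O}_x$, coherence of the radical from a standard reference, identical support since taking radicals preserves zero loci, and codimension two of the singular set from reducedness. The only quibble is the citation for coherence---the paper invokes the fact that the radical of a coherent ideal sheaf is coherent (Tougeron, \emph{Id\'eaux de fonctions diff\'erentiables}, Chap.~II, Cor.~7.4) rather than Risler's Nullstellensatz---and your codimension-two argument is spelled out in more detail than the paper's, which simply asserts the conclusion from $\mathcal{I}_\Delta$ being reduced and coherent.
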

\begin{proof}[Proof of Lemma \ref{LEMAS2}]
Since $\mathcal{J}_{\Delta}$ is principal and $\mathcal{O}_x$ unique factorization domain, it follows that $\mathcal{I}_{\Delta}$ is principal. Next, $\mathcal{I}_{\Delta}$ is coherent by \cite[Chapitre 2 Corollaire 7.4]{Tougeron} and its support is clearly equal to the support $\mathcal{J}_{\Delta}$ which is the Martinet surface $\Sigma$. Finally, from the fact that $\sqrt{\mathcal{I}_{\Delta}} = \mathcal{I}_{\Delta}$ and that $\mathcal{I}_{\Delta}$ is coherent, it is clear that its singular set has codimension two.
\end{proof}

Finally, Lemma \ref{LEMAS2} implies that $\Sigma_{\Delta}:= (\Sigma, \mathcal{O}_M/ \mathcal{I}_{\Delta})$ is a coherent analytic space whose singular locus has codimension two. 

\begin{remark}\label{rk:MartinetSpace}
The ideal sheaf $\mathcal{I}_{\Delta}$ is not necessarily maximal in respect to the sub-category of coherent ideal sheaves whose support is the Martinet set $\Sigma$. We could replace $\mathcal{I}_{\Delta}$ by such a maximal coherent ideal sheaf, provided that it exists. Because of this remark, if $\Sigma$ is a coherent analytic set (see \cite[p. 93, Definition 2]{n66}) then we can assume that $\mbox{Sing}(\Sigma_{\Delta}) = \mbox{Sing}(\Sigma)$.
\end{remark}

\begin{remark}
The Martinet surface $\Sigma$ is not necessarily a coherent analytic set (see \cite[p. 93, Definition 2]{n66}) but it is a $\mathbb{C}$-analytic set (see \cite[p. 104, Definition 6]{n66}).
\end{remark}

\section{Computations}\label{appendixcomputations}
Here we provide the proof of the estimates (\ref{18july1})-(\ref{18july4}). We have 
\begin{multline*}
 z(+\infty)-z(-\infty) =  \frac{4}{3} \int_{-\infty}^{+\infty}  x(t)y(t)^4 \, dt =  \frac{8}{3} \int_{-\infty}^{0}  x(t)y(t)^4 \, dt \\
=   -  \frac{4}{3} \int_{-\infty}^{0} \dot{x}(t) \, y(t)^3 \, dt=    -  \frac{4}{3} \int_{-\infty}^{0} |\dot{x}(t)| \, |y(t)|^3 \, dt  \\
     =      - \frac{4}{3} \int_{-\infty}^{0} |\dot{x}(t)| \, |x(t)|^3 \left|x(t)+z(t)\right|^{\frac{3}{2}} \, dt \\
     \leq  - \frac{4}{3} \, \int_{-\infty}^{-T} |\dot{x}(t)|  \, |x(t)|^3  \left|x(t)-\bar{x}\right|^{\frac{3}{2}} \, dt \\
    = -\frac{4}{3} \int_{0}^{-\bar{x}} r^3 (-r-\bar{x})^{3/2} \, dr   \leq - \frac{1}{35} |-\bar{x}|^{11/2} \leq -\frac{1}{35} z(+\infty)^{11/2},
\end{multline*}
\begin{multline*}
 z(+\infty)-z(-\infty) =  \frac{4}{3} \int_{-\infty}^{+\infty}  x(t)y(t)^4 \, dt \\
=   -  \frac{2}{3} \int_{-\infty}^{+\infty} \dot{x}(t) \, y(t)^3 \, dt=    -  \frac{2}{3} \int_{-\infty}^{+\infty} |\dot{x}(t)| \, |y(t)|^3 \, dt  \\
     =      - \frac{2}{3} \int_{-\infty}^{+\infty} |\dot{x}(t)| \, |x(t)|^3 \left|x(t)+z(t)\right|^{\frac{3}{2}} \, dt \\
     \geq  - \frac{2}{3}  \left|\bar{x} \right|^3 \, \left| z(-\infty)\right|^{3/2} \int_{-\infty}^{+\infty} |\dot{x}(t)|  \, dt   \geq - \frac{2}{3} |z(-\infty)|^{9/2} \, \ell \left( \hat{P} \right)
\end{multline*}
\begin{multline*}
z(-\infty) = -\bar{x} - \frac{4}{3} \int_{-\infty}^{0}  x(t)y(t)^4 \, dt =  -\bar{x} +  \frac{2}{3} \int_{-\infty}^{0} \dot{x}(t) \, y(t)^3 \, dt \\
=   -\bar{x} +  \frac{2}{3} \int_{-\infty}^{0} |\dot{x}(t)| \, |y(t)|^3 \, dt  \\
     =   |\bar{x} |  + \frac{2}{3} |\bar{x}|^{9/2} \, \int_{-\infty}^{0} |\dot{x}(t)|  \, dt \leq \ell (\hat{P}) \left[ \frac{1}{2} + \frac{|z(-\infty)|^{9/2}}{3} \right],
\end{multline*}
\begin{eqnarray*}
\ell (\hat{P}) \leq \ell (P) :=\int_{-\infty}^{+\infty} \left| \dot{P}(t) \right| \, dt,
\end{eqnarray*}
and
\begin{multline*}
\ell (P)  \leq \sqrt{2} \, \ell (\hat{P}) + \int_{-\infty}^{+\infty} \left| \dot{z}(t) \right| \, dt \leq \ell (\hat{P}) \, \left[ \sqrt{2} +   \frac{2}{3} |z(-\infty)|^{9/2} \right] \\
\leq \left[2 \left| \bar{x} \right| + 4  \left| \bar{x} \right|^{3/2}\right] \, \left[ \sqrt{2} +   \frac{2}{3} |z(-\infty)|^{9/2} \right] \\
\leq 2 \, \left[ \left| z(-\infty) \right| + 2  \left| z(-\infty) \right|^{3/2}\right]  \, \left[ \sqrt{2} +   \frac{2}{3} |z(-\infty)|^{9/2} \right].
\end{multline*}

\end{document}